\newtheorem{theorem}{Theorem}[section]
\newtheorem{lemma}[theorem]{Lemma}
\newtheorem{proposition}[theorem]{Proposition}
\newtheorem{corollary}[theorem]{Corollary}
\theoremstyle{definition}
\newtheorem{example}[theorem]{Example}
\theoremstyle{remark}
\newtheorem{remark}[theorem]{Remark}
\numberwithin{table}{section}
\numberwithin{figure}{section}
\def\CC{{\mathbb C}}
\def\comb{\mathrm{comb}}  \def\L{\mathcal{L}}
\newcommand{\nola}{{\rule{2pt}{0pt}}} 
\renewcommand{\S}{{\mathfrak S}}
\newcommand{\T}{{\mathcal T}} \def\bx{\mathbf{x}}
\newcommand{\C}{{\mathcal C}}
\newcommand{\ld}{\delta}
\newcommand{\md}{d}
\DeclareMathOperator{\tr}{tr} 
\DeclareMathOperator{\des}{des} 
\newcommand{\defcolor}[1]{{\color{blue}{#1}}}
\newcommand{\demph}[1]{\defcolor{{\sl #1}}}
\title{Modular Catalan Numbers}
\author{Nickolas Hein}
\address{Department of Mathematics and Computer Science, Benedictine College, Atchison, KS 66002, USA}
\email{\href{mailto:nhein@benedictine.edu}{nhein@benedictine.edu}}
\author{Jia Huang}
\address{Department of Mathematics and Statistics, University of Nebraska at Kearney, Kearney, NE 68849, USA}
\email{\href{mailto:huangj2@unk.edu}{huangj2@unk.edu}}
\thanks{The authors thank Christian Krattenthaler, Jeremy Martin, Victor Reiner, and Eric Rowland for their valuable suggestions and comments.}
\keywords{Catalan number, Tamari order, Motzkin number, pattern avoidance.}
\begin{document}

\begin{abstract}
The Catalan number $C_n$ enumerates parenthesizations of $x_0*\dotsb*x_n$ where $*$ is a binary operation.
We introduce the modular Catalan number $C_{k,n}$ to count equivalence classes of parenthesizations of $x_0*\dotsb*x_n$ when $*$ satisfies a $k$-associative law generalizing the usual associativity.
This leads to a study of restricted families of Catalan objects enumerated by $C_{k,n}$ with emphasis on binary trees, plane trees, and Dyck paths, each avoiding certain patterns.
We give closed formulas for $C_{k,n}$ with two different proofs.
For each $n\ge0$ we compute the largest size of $k$-associative equivalence classes and show that the number of classes with this size is a Catalan number.
\end{abstract}

\maketitle

\section{Introduction}

Let $X$ be a set with a binary operation $*:X^2\to X$ and $x_0,\dotsc,x_n$ be $X$-valued indeterminates.
A binary operation $*$ induces a map $X^{n+1}\to X$ given by the expression $x_0*\dotsb*x_{n}$ and a choice of an order to apply each $*$.
The expression $x_0*\dotsb*x_{n}$ alone may be ambiguous, so it might not define a map without using parentheses to record the order of operations.
The number of ways to parenthesize $x_0*\dotsb*x_{n}$ is the Catalan number $C_n=\frac{1}{n+1}{2n\choose n}$ which enumerates hundreds~\cite{EC2,StanleyCatalan} of families of other natural objects.
This Catalan number gives an upper bound for the number of ways to interpret the meaning of $x_0*\dotsb*x_{n}$.

When $*$ is associative, one has $(x_0*x_1)*x_2 = x_0*(x_1*x_2)$, and thus all parenthesizations of $x_0*\dotsb*x_n$ are equal.
We investigate a natural generalization of this case.
Let $k\ge1$ be a positive integer, and suppose $*$ is a left-to-right binary operation.
We say a (left-to-right) binary operation $*$ is \demph{$k$-associative} if
\[ (x_0*\dotsb *x_k)*x_{k+1} = x_0*(x_1*\dotsb *x_{k+1}).\]
The results in this paper are also valid for right-to-left binary operations, subject to a reflection.

One may define a $k$-associative binary operation on any ring $R$ with an element $\omega$ of multiplicative order $k$ by $a*b := \omega a + b$.
Consider the ring $R=\CC$ and the primitive $k$th root of unity $\omega = e^{2\pi i/k}$ for a concrete example.

We say two parenthesizations are \demph{$k$-equivalent} if they are equal by the $k$-associative property.
We define the \demph{($k$-)modular Catalan number $C_{k,n}$} to be the number of $k$-equivalence classes of parenthesizations of $x_0*\dotsb*x_n$. 
Since $1$-associativity is the usual associativity, we have $C_{1,n}=1$.
The first nontrivial example is $C_{2,3}=4$.
We illustrate this by listing the $C_3=5$ parenthesizations for $n=3$,
\[
   ((x_0*  x_1)* x_2)* x_3,\quad 
    (x_0*  x_1)*(x_2*  x_3),\quad
    (x_0* (x_1*  x_2))*x_3,\quad
     x_0*((x_1*  x_2)* x_3),\quad
     x_0* (x_1*( x_2*  x_3)),
  \]
and observing the first and fourth parenthesizations are $2$-equivalent.
 
Modular Catalan numbers appear elsewhere for small fixed values of $k$.
The On-Line Encyclopedia of Integer Sequences (OEIS)~\cite{OEIS} sequence A005773 coincides with $\{C_{3,n}\}$.
This sequence counts directed $n$-ominoes in standard position~\cite{omino}, $n$-digit base three numbers whose digits sum to $n$, 
permutations of $[n]:=\{1,2,\ldots,n\}$ avoiding $1$-$3$-$2$ and $123$-$4$~\cite{Mansour}, minimax elements in the affine Weyl group of the Lie algebra $\mathfrak{so}_{2n+1}$ (or $\mathfrak{sp}_{2n}$)~\cite{Panyushev}, and other objects as well.
Rowland~\cite{Rowland} studied the case $k=4$, and his point of view of pattern avoidance in binary trees is relevant to our investigation.
We found no results for $k\ge5$ in the literature.

\begin{table}[h]
\[
\footnotesize\begin{tabular}{c|lllllllllllllll|c}
$n$ & 0 & 1 & 2 & 3 & 4 & 5 & 6 & 7 & 8 & 9 & 10 & 11 & 12 & 13 & 14 \\
\hline
$C_{1,n}$ & 1 & {\color{blue}1} & {\color{red}1} & 1  & 1 & 1  & 1 & 1 & 1  & 1 & 1  & 1 & 1 & 1 & 1 & A000012 \\
$C_{2,n}$ & 1 & 1 & {\color{blue}2} & {\color{red}4} & 8 & 16 & 32 & 64 & 128 & 256 & 512 & 1024 & 2048 & 4096 & 8192 & A011782 \\
$C_{3,n}$ & 1 & 1 & 2 & {\color{blue}5} & {\color{red}13} & 35 & 96 & 267 & 750 & 2123 & 6046 & 17303 & 49721 & 143365 & 414584 & A005773 \\
$C_{4,n}$ & 1 & 1 & 2 & 5 & {\color{blue}14} & {\color{red}41} & 124 & 384 & 1210 & 3865 & 12482 & 40677 & 133572 & 441468  & 1467296
 & A159772 \\
$C_{5,n}$ & 1 & 1 & 2 & 5 & 14 & {\color{blue}42} & {\color{red}131} & 420 & 1375 & 4576 & 15431 & 52603 & 180957 & 627340 & 2189430
 & \defcolor{new} \\
$C_{6,n}$ & 1 & 1 & 2 & 5 & 14 & 42 & {\color{blue}132} & {\color{red}428} & 1420 & 4796 & 16432 & 56966 & 199444 & 704146 & 2504000
 & \defcolor{new} \\
$C_{7,n}$ & 1 & 1 & 2 & 5 & 14 & 42 & 132 & {\color{blue}429} & {\color{red}1429} & 4851 & 16718 & 58331 & 205632 & 731272 & 2620176 & \defcolor{new} \\
$C_{8,n}$ & 1 & 1 & 2 & 5 & 14 & 42 & 132 & 429 & {\color{blue}1430} & {\color{red}4861} & 16784 & 58695 & 207452 & 739840 & 2658936
 & \defcolor{new} \\
$C_n$ & 1 & 1 & 2 & 5 & 14 & 42 & 132 & 429 & 1430 & 4862 & 16796 & 58786 & 208012 & 742900 & 2674440 & A000108 
\end{tabular}\]
\caption{Modular Catalan number $C_{k,n}$ for $n\le14$ and $k\le8$.}
\label{tab:Cnk}
\end{table}

A computation gives the data in Table~\ref{tab:Cnk} above. We highlight the entries ${\color{blue}{C_{k,k}}}$ and ${\color{red}{C_{k,k+1}}}$ which satisfy the relationships ${\color{blue}{C_{k,k}}}=C_{k}$ and ${\color{red}{C_{k,k+1}}}=C_{k}-1$, and we list OEIS sequences that coincide with $\{C_{k,n}\}$.

Our definition of $C_{k,n}$ using $k$-associative binary operations and parenthesizations provides a new perspective for these numbers.
It is natural and works for all $k\ge1$.
It is based on basic concepts in algebra and has connections to many interesting combinatorial objects as well, as we will observe in later sections.
Our main result in this paper is Theorem~\ref{thm1} below, which gives two closed formulas for the modular Catalan numbers.
This generalizes previously known formulas for $C_{k,n}$ with $1\le k\le 4$.
The first formula uses the evaluations of \demph{monomial symmetric functions} $m_\lambda$, which can be rewritten as certain multinomial coefficients, for partitions $\lambda$ inside a $(k-1)\times n$ rectangle.
The second formula is a simple summation with alternating signs.

\begin{theorem}\label{thm1}
For $k,n\ge1$ we have 
\[ C_{k,n} = \sum_{\substack{\lambda\subseteq(k-1)^n \\ |\lambda|<n }} \frac{n-|\lambda|}{n} m_\lambda(1^n) 
= \sum_{0\le j\le (n-1)/k} \frac{(-1)^j}{n} {n\choose j} {2n-jk\choose n+1}. \] 
\end{theorem}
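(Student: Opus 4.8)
\noindent The plan is to identify $C_{k,n}$ with the number of objects in a concrete Catalan sub-family and then count those objects in two ways. I expect the preceding sections to establish that the canonical representatives of $k$-equivalence classes are the binary trees with $n$ internal nodes avoiding a left comb of height $k$, and that under the left-child/right-sibling correspondence these match the plane forests on $n$ nodes in which every node has at most $k-1$ children; granting this, everything becomes a cycle-lemma computation. Write $N(s)$ for the number of words $c=(c_1,\dots,c_n)\in\{0,1,\dots,k-1\}^n$ with $c_1+\dots+c_n=s$. Grouping words by their sorted content gives $N(s)=\sum_{\lambda\subseteq(k-1)^n,\ |\lambda|=s}m_\lambda(1^n)$, and also $N(s)=[x^s]\bigl(\tfrac{1-x^k}{1-x}\bigr)^n$.

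\smallskip\noindent\emph{First formula.} Encode a plane forest on $n$ nodes having $d$ trees by its depth-first out-degree word $c\in\{0,\dots,k-1\}^n$, which satisfies $\sum c_i=n-d$. By the Dvoretzky--Motzkin cycle lemma, among the $n$ cyclic rotations of an arbitrary word $c\in\{0,\dots,k-1\}^n$ with $\sum c_i=s<n$ exactly $n-s$ are depth-first out-degree words of forests (rotation preserves both the letter bound and the digit sum). Hence the number of degree-bounded plane forests on $n$ nodes equals $\tfrac1n\sum_{c}\bigl(n-\sum c_i\bigr)$, the sum over all $c\in\{0,\dots,k-1\}^n$ with $\sum c_i<n$; collecting words by content $\lambda$ rewrites this as $\sum_{\lambda\subseteq(k-1)^n,\ |\lambda|<n}\tfrac{n-|\lambda|}{n}\,m_\lambda(1^n)$.

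\smallskip\noindent\emph{Second formula.} Starting from the same weighted count $\tfrac1n\sum_{s<n}(n-s)N(s)$, I would run inclusion--exclusion on the set of coordinates $i$ with $c_i\ge k$: writing each factor as $[\,c_i\le k-1\,]=1-[\,c_i\ge k\,]$ and, for a fixed $j$-element set of forced-large coordinates, substituting $c_i\mapsto c_i+k$ there, the contribution of that set becomes $\tfrac1n\sum_{v\in\mathbb Z_{\ge0}^n,\ \sum v_i<n-jk}\bigl(n-jk-\sum v_i\bigr)$. By the hockey-stick identity $\sum_{l=0}^{m-1}\binom{l+n}{n}=\binom{n+m}{n+1}$ with $m=n-jk$, this equals $\tfrac1n\binom{2n-jk}{n+1}$; since there are $\binom nj$ such coordinate sets and the binomial vanishes once $jk>n-1$, we get $\sum_{0\le j\le(n-1)/k}\tfrac{(-1)^j}{n}\binom nj\binom{2n-jk}{n+1}$. (Equivalently, one can obtain this algebraically from the first formula by expanding $N(s)=[x^s](1-x^k)^n(1-x)^{-n}$ via the binomial theorem and evaluating the inner sum $\sum_{s}(n-s)\binom{s-jk+n-1}{n-1}$ by the same hockey-stick manipulation.)

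\smallskip\noindent\emph{Main obstacle.} The genuinely new content is the first step: pinning down the canonical representative of a $k$-equivalence class and proving the bijection with degree-bounded plane forests (i.e.\ that rewriting by $k$-associativity is confluent to these canonical forms). Once that model is available, both formulas are cycle lemma plus bookkeeping; there the only care needed is getting the constant $\tfrac{n-s}{n}$ in the cycle lemma exactly right and handling the boundary terms ($j=0$, and $j$ near $(n-1)/k$) in the inclusion--exclusion.
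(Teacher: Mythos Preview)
Your argument is correct and matches the paper's second proof (Section~5) closely: the first formula is exactly the Dvoretzky--Motzkin cycle lemma, which the paper packages as a bijection on marked Dyck paths (Lemmas~5.1--5.2), and the second is inclusion--exclusion on the bound $c_i\le k-1$, which the paper presents bijectively as a sign-reversing involution on lattice paths with $j$ marked ``long'' up-runs rather than algebraically via your double hockey-stick. The paper also supplies an independent first proof (Section~4) by Lagrange inversion on the functional equation for $M_{k-1}(z)=C(1^k,z)$ together with $C_k(z)=z/(1-M_{k-1}(z))$ and the identity $\sum_\ell \ell\binom{a-\ell-1}{b-2}=\binom{a}{b}$; that route bypasses the cycle lemma entirely. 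One small slip: the $k$-minimal binary trees avoid $\mathrm{comb}_k^1=\mathrm{comb}_0\wedge\mathrm{comb}_k$, not the bare left comb $\mathrm{comb}_k$ (the latter gives $M_{k-1,n}$), but since you immediately pass to the correct plane-forest model this does not affect the computation.
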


To establish Theorem~\ref{thm1}, we first study the connection of parenthesizations to binary trees and plane trees in Section~\ref{sec:trees}, which is summarized below.

Let $\defcolor{\T}$ denote the set of all binary trees.
We define a (left-to-right) binary operation $\defcolor{\wedge}:\T\times \T \to \T$ where $\defcolor{s\wedge t}$ is the binary tree whose root has left and right subtrees $s$ and $t$, respectively.
There is a natural bijection between the set of parenthesizations of $x_0*\cdots *x_n$ and the set $\defcolor{\T_n}$ of binary trees with $n$ internal nodes (i.e., with $n+1$ leaves) by replacing each $x_i$ by a leaf labeled $i$ and replacing each $*$ by $\wedge$.
We define the \demph{$k$-associative order} on $\T_n$ by
\[(t_0\wedge t_1\wedge \dotsb\wedge t_k)\wedge t_{k+1} < t_0\wedge(t_1\wedge\dotsb\wedge t_{k+1}) \]
where each $t_i$ is a binary tree.
If $k\mid k'$ then the $k'$-associative order is weaker than the $k$-associative order.
In particular, any $k$-associative order is weaker than the $1$-associative order, which is called the \demph{Tamari order}.
Under the Tamari order, $\T_n$ becomes a lattice, called the \demph{Tamari lattice}, which has been widely investigated (see, e.g.,~\cite{TamariPhys,FishelNelson,Reading}) since its introduction by Tamari~\cite{Tamari}.
We define the \demph{$k$-components} of $\T_n$ to be the connected components of $\T_n$ under the $k$-associative order, which correspond to $k$-equivalence classes of parenthesizations of $x_0*\cdots *x_n$. 
The maximal and minimal elements of a $k$-component are called $k$-maximal and $k$-minimal, respectively.

We also translate the $k$-associative order to plane trees, as they are in natural bijection with binary trees.
For our purposes, it is sometimes more convenient to deal with plane trees than binary trees.
 
We prove that each $k$-component of $\T_{n}$ contains a unique minimal element.  
Consequently, the modular Catalan number $C_{k,n}$ enumerates the $k$-minimal elements of $\T_n$.
This is closely related the \demph{generalized Motzkin number} $M_{k,n}$ which counts $k$-maximal elements of $\T_{n}$.
We show that the $k$-minimal and $k$-maximal elements of $\T_n$ may be described using subtree avoidance in binary trees or degree constraints in plane trees.

Remarkably, Theorem~\ref{thm:largest} and Corollary~\ref{cor:largest} assert the number of largest $k$-components of $\T_{n}$ is the Catalan number $C_m$, where $m$ is the least positive integer congruent to $n$ modulo $k$.

Next, in Section~\ref{sec:objects}, we describe several restricted families of Catalan objects enumerated by $C_{k,n}$ and $M_{k,n}$, using bijections among them (see Proposition~\ref{prop:M counts} and Proposition~\ref{prop:C counts}).
This implies that the generalized Motzkin numbers and modular Catalan numbers are interlaced,
\[C_{1,n}\le M_{1,n} \le C_{2,n}\le M_{2,n} \le \cdots \,.\]

Section~\ref{sec:formula} includes a proof for Theorem~\ref{thm1} using generating functions and Lagrange inversion, as well as other related results.
We show the generating functions of $C_{k,n}$ and $M_{k,n}$ 
satisfy polynomial equations and are closely related to each other, as seen in Proposition~\ref{prop:gen}.
We give the first formula of Theorem~\ref{thm1} in Corollary~\ref{cor:FormulaPositive} and the second in Theorem~\ref{thm:FormulaAlt}.

Corollary~\ref{cor:FormulaPositive} and Theorem~\ref{thm:FormulaAlt} give formulas for $M_{k,n}$, analogous to those for $C_{k,n}$ of Theorem~\ref{thm1}.
These formulas for $M_{k,n}$ may be derived from work of Tak\'acs~\cite{Takacs} on plane trees with degree constraints.
One may specialize these formulas to compute the \demph{Motzkin number $M_n:=M_{2,n}$} (see OEIS A001006), which counts permutations avoiding certain patterns~\cite{perm,Mansour}, standard Young tableaux of height at most three~\cite{YoungTab}, minimax elements in the affine Weyl group of the Lie algebra $\mathfrak{sl}_{n+1}$~\cite{Panyushev}, and many other objects~\cite[Ex. 6.38]{EC2}. 
For $k=3,\dotsc,7$, the sequences $\{M_{k,n}\}$ coincide with the OEIS sequences A036765,$\,\dotsc,\,$A036769, respectively.

Our generating function approach to study $k$-minimal and $k$-maximal elements of $\T_n$ is also used to prove Proposition~\ref{prop:FormulaLargest}, which shows the size of the largest $k$-components of $\T_n$ equals
\[ \sum_{0\le j\le n/k} \frac{n-jk}{n}{n+j-1 \choose j} .\]

In Section~\ref{sec:Dyck} we use certain rotations of Dyck paths to give a more direct proof for Theorem~\ref{thm1}, with negative signs from sign-reversing involutions, and a similar proof for the above formula for the size of the largest $k$-components of $\T_n$.

It is well-known that the Catalan number $C_n$ can be refined to the \emph{Narayana number} 
\[ N_{n,r} := \frac 1n {n\choose r}{n\choose r-1} \]
which enumerates plane trees with $n+1$ total nodes, of which $r$ are internal, Dyck paths of length $2n$ with $r$ peaks, and many other objects (see, e.g.,~\cite[Ch. 2]{Petersen}).
We provide similar refinements of $C_{k,n}$ and $M_{k,n}$ in Section~\ref{sec:refinement}.

Finally, we provide remarks and questions in Section~\ref{sec:questions}.

\section{Parenthesizations and trees}\label{sec:trees}

In this section we study $k$-equivalence classes of parenthesizations via binary trees and plane trees.
A \demph{plane tree} is a rooted tree such that the children of each node are linearly ordered from left to right.
The \demph{degree} of a node is the number of its children.
Degree-zero nodes are \demph{leaves}, and all others are \demph{internal nodes}.
A tree $t$ whose edges and nodes are contained in $t$ is a \demph{subtree} of $t$.
If $v$ is a node of $t$, then the \demph{(maximal) subtree rooted at $v$} is the subtree of $t$ whose nodes are $v$ and all descendants of $v$.
The \demph{$i$th subtree} of $v$ is the subtree rooted at the $i$th child of $v$.
A \demph{binary tree} is a plane tree whose nodes have degree either zero or two.
We consider binary tree and plane tree to be different objects in this paper.

\subsection{Binary trees}\label{sec:binary}
Denote by $\defcolor{\T_n}$ the set of binary trees with $n+1$ leaves.
Let $\defcolor{s\wedge t}$ be the binary tree whose root has left and right subtrees $s$ and $t$.
There is a natural bijection between the set of parenthesizations of $x_0 * \dotsb * x_n$ and $\T_n$ given by replacing each $x_i$ by a leaf labeled $i$ and replacing each $*$ by $\wedge$.

\begin{example}\label{example:correspondence}
We list all binary trees in $\T_3$ and their corresponding parenthesizations in Figure~\ref{fig:T2P}.
\begin{figure}[h]
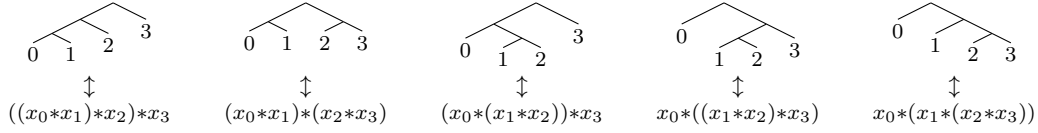

\[\scriptsize\begin{array}{ccccc}
\Tree [.  [. [. 0 1 ] 2 ] 3 ] &
\Tree [.  [. 0 1 ] [. 2 3 ] ] &
\Tree [.  [. 0 [. 1 2 ] ] 3 ] &
\Tree [. 0 [. [. 1 2 ] 3 ] ] &
\Tree [. 0  [. 1 [. 2 3 ] ] ] \\
\updownarrow & \updownarrow & \updownarrow & \updownarrow & \updownarrow \\
\rule{5pt}{0pt}((x_0 {*} x_1) {*} x_2) {*} x_3\rule{5pt}{0pt} &
\rule{5pt}{0pt}(x_0 {*} x_1) {*} (x_2 {*} x_3)\rule{5pt}{0pt} &
\rule{5pt}{0pt}(x_0 {*} (x_1{*} x_2)) {*} x_3 \rule{5pt}{0pt} &
\rule{5pt}{0pt}x_0 {*} ((x_1 {*} x_2) {*} x_3)\rule{5pt}{0pt} &
\rule{5pt}{0pt}x_0 {*} (x_1 {*} (x_2 {*} x_3))\rule{5pt}{0pt}
\end{array}\]
\caption{Correspondence between binary trees and parenthesizations}\label{fig:T2P}
\end{figure}
\end{example}
The \demph{left depth of node $v$} of $t$ is the number of left steps in the path from the root down to $v$.
Write $\demph{\ld_i(t)}$ for the left depth of leaf $i$ in binary tree $t$ and call $\defcolor{\ld(t)}:=(\ld_0(t),\ldots,\ld_{n}(t))$ the \demph{left depth of tree $t$}.
The five binary trees in Example~\ref{example:correspondence} have left depths $(3,2,1,0)$, $(2,1,1,0)$, $(2,2,1,0)$, $(1,2,1,0)$, and $(1,1,1,0)$. 

We construct a set \demph{$\mathcal D_n$} by setting $\mathcal D_0:=\{(0)\}$ and recursively defining $\mathcal D_n$ for $n\ge1$ as follows:
\[ \mathcal D_{n}:=\bigcup_{1\le i\le n} \{(a_0+1,\ldots,a_{i-1}+1,b_0,\ldots,b_{n-i})\ |\ a\in \mathcal D_{i-1}\,,\ b\in \mathcal D_{n-i}\}\,.\]
By induction on $n$, we have a surjection $\delta: \T_n\twoheadrightarrow \mathcal D_n$ by $t\mapsto \delta(t)$.
To see $\delta$ is injective, let $(\delta_0,\ldots,\delta_n)\in\mathcal D_n$. 
Then $(\delta_0-1,\ldots,\delta_{i-1}-1) \in \mathcal D_{i-1}$ and $(\delta_i,\ldots,\delta_n)\in \mathcal D_{n-i}$ for some $i\in[n]$. 
Since $(\delta_0-1,\ldots,\delta_{i-1}-1) = \delta(s)$ for some $s\in\T_{i-1}$, it follows that $i$ is the smallest positive integer such that $\delta_{i-1}=1$.
This implies $\delta$ is injective (and the union in the definition of $\mathcal D_n$ is disjoint).

\begin{example}\label{example:ring}
Let $R$ be a ring with an element $\omega$ of multiplicative order $k$. 
Define $a*b:=\omega a+b$ for all $a,b\in R$.
This gives a $k$-associative binary operation on $R$.
A binary tree $t$ with left depth $\ld(t)=(\ld_0,\ldots,\ld_{n})$ determines a parenthesization of $x_0*\dotsb *x_n$ which may be written $\sum_{0\le i\le n} \omega^{\ld_i} x_i$.
Thus the $k$-equivalence relation on parenthesizations of $x_0*\dotsb*x_n$ is the same as the congruence relation modulo $k$ on the left depths of binary trees in $\T_n$.
We will show that the same result holds for any $k$-associative binary operation $*$.
\end{example}

We take the operation $\wedge$ on trees to be a left-to-right operation so that $r\wedge s\wedge t:=(r\wedge s)\wedge t$.
Let $t_0,\dotsc,t_{k+1}$ be binary trees, and suppose $t\in \T_n$ has subtree $s:=(t_0\wedge t_1\wedge \dotsb\wedge t_k)\wedge t_{k+1}$ rooted at node $v$.
Replacing $s$ by $s':=t_0\wedge(t_1\wedge\dotsb\wedge t_{k+1})$ gives another binary tree $t'\in \T_n$.
We call the operation $t\mapsto t'$ a \demph{right $k$-rotation at $v$} and denote it by $t\xrightarrow{k} t'$.
We call the inverse operation a \demph{left $k$-rotation at $v$}.
If $t\in \T_n$ may be obtained by applying finitely many left $k$-rotations to $t'\in \T_n$, then we say \demph{$t \le t'$}.
The induced partial order on $\T_n$ is the \demph{$k$-associative order}.
The set $\T_n$ endowed with the $1$-associative order is the well-known \demph{Tamari lattice}.
Connected components of $\T_n$ under the $k$-associative order are called \demph{$k$-components}.
\begin{example}
The left poset in Figure~\ref{fig:Tamari} shows the Tamari order on $\T_4$, and the right poset shows the $2$-associative order on $\T_4$ with eight $2$-components having cardinality $1,1,1,1,2,2,3$, and $3$ respectively.
\end{example}

\begin{figure}[h]
\[
 \includegraphics[width=.45\textwidth]{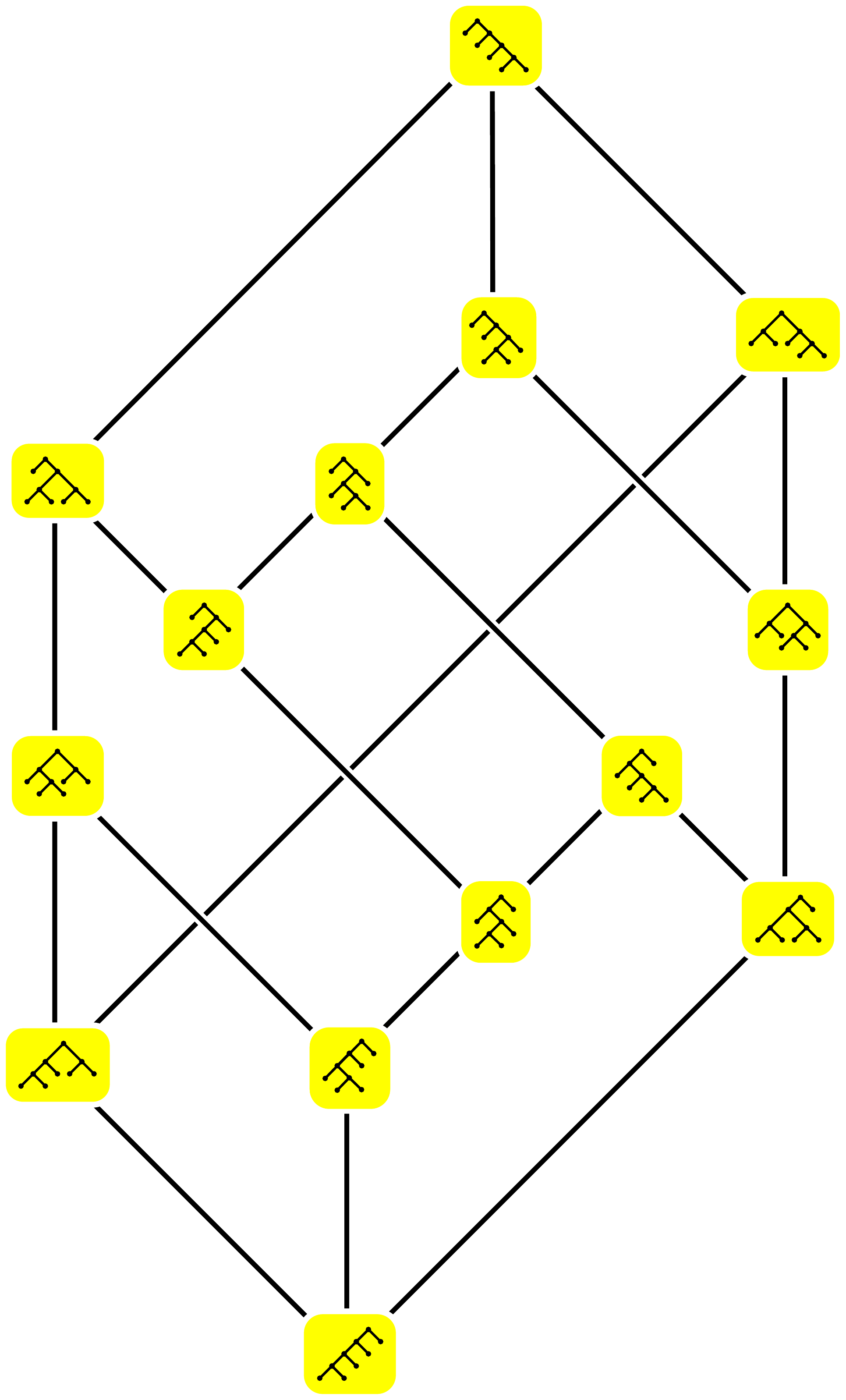}
 \rule{.08\textwidth}{0pt}
 \includegraphics[width=.47\textwidth]{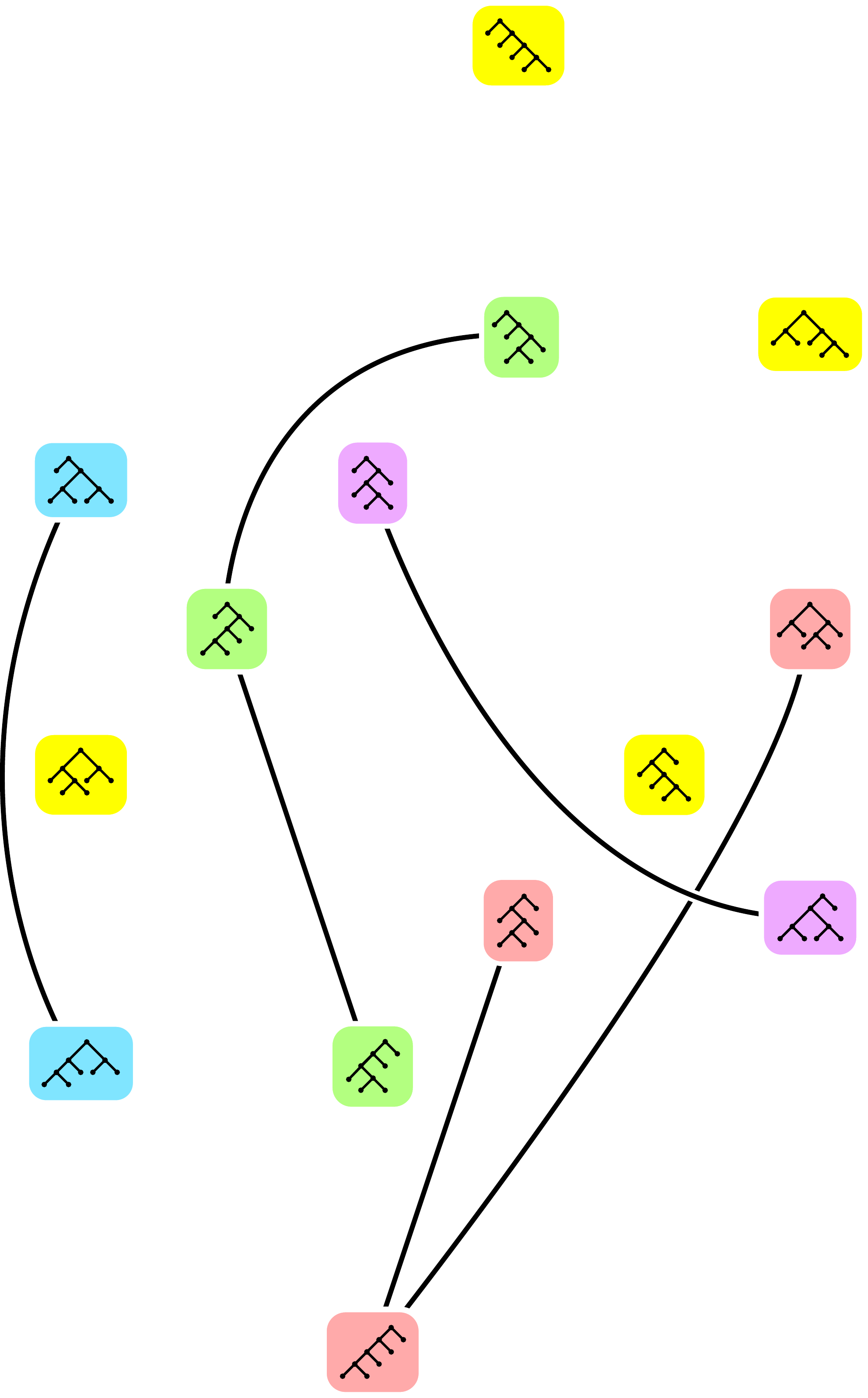}
\]
\caption{Tamari order and $2$-associative order on $\T_4$}\label{fig:Tamari}
\end{figure}
Two parenthesizations of $x_0*\cdots*x_n$ are $k$-equivalent if and only if their corresponding binary trees are \demph{$k$-equivalent}, which means they are in the same $k$-component of $\T_{n}$.

\begin{proposition}\label{P:number_classes_is_Catalan}
The modular Catalan number $C_{k,n}$ enumerates the $k$-components of $\T_{n}$.
\end{proposition}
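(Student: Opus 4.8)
The plan is to transport both equivalence relations along the bijection $\phi$ described above between parenthesizations of $x_0*\dotsb*x_n$ and binary trees in $\T_n$ (replace each $x_i$ by a leaf labeled $i$ and each $*$ by $\wedge$), and then to observe that the generating relations of the two equivalences correspond to each other term by term under $\phi$. Since $\phi$ is already known to be a bijection, this reduces the proposition to matching up definitions.

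First I would make explicit what ``equal by the $k$-associative property'' means for parenthesizations: $P$ and $P'$ are $k$-equivalent exactly when one can be obtained from the other by a finite chain of moves, each of which takes a subexpression of the form $(T_0*T_1*\dotsb*T_k)*T_{k+1}$ (with the left-to-right convention on the inner $*$'s) and rewrites it as $T_0*(T_1*\dotsb*T_{k+1})$, or performs the reverse, where $T_0,\dotsc,T_{k+1}$ are themselves parenthesized subexpressions (equivalently, this is the congruence on parenthesizations generated by the $k$-associative identity). So $k$-equivalence is the reflexive–transitive–symmetric closure of the relation ``connected by one such rewriting move.''

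The key step is then to verify that a single rewriting move on $P$ corresponds under $\phi$ to a single $k$-rotation on $\phi(P)$. Indeed, a subexpression $(T_0*\dotsb*T_k)*T_{k+1}$ of $P$ is carried by $\phi$ to the subtree $(\phi(T_0)\wedge\dotsb\wedge\phi(T_k))\wedge\phi(T_{k+1})$ rooted at some node $v$ of $\phi(P)$, and replacing it by $T_0*(T_1*\dotsb*T_{k+1})$ replaces that subtree by $\phi(T_0)\wedge(\phi(T_1)\wedge\dotsb\wedge\phi(T_{k+1}))$, which is precisely the right $k$-rotation $\phi(P)\xrightarrow{k}\phi(P')$ at $v$; the reverse move is a left $k$-rotation. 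Conversely, every $k$-rotation on a tree in $\T_n$ arises this way, since $\phi$ is a bijection and the node $v$ together with its subtree of the required shape pulls back to a subexpression of $P$ of the required form. Hence $\phi$ intertwines ``connected by one rewriting move'' on parenthesizations with ``connected by one $k$-rotation'' on $\T_n$, and therefore intertwines their reflexive–transitive–symmetric closures as well.

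Finally, by definition the $k$-associative order on $\T_n$ is the partial order generated by $k$-rotations, so two binary trees lie in the same $k$-component (connected component under this order) if and only if they are joined by a chain of $k$-rotations. Combined with the previous paragraph, $\phi$ restricts to a bijection between the $k$-equivalence classes of parenthesizations of $x_0*\dotsb*x_n$ and the $k$-components of $\T_n$; since $C_{k,n}$ is defined as the number of the former, it counts the latter. I do not expect a genuine obstacle here: the only point requiring care is making the rewriting semantics of $k$-equivalence explicit and checking it is faithfully modeled by $k$-rotations at arbitrary nodes — that is, that allowing the slots $T_0,\dotsc,T_{k+1}$ to be arbitrary subexpressions rather than single indeterminates introduces no discrepancy with the $k$-associative order as defined on $\T_n$.
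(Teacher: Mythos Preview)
Your argument is correct and is precisely the approach the paper takes. In fact, the paper treats this proposition as immediate: the sentence preceding it states that two parenthesizations are $k$-equivalent if and only if their corresponding binary trees lie in the same $k$-component, and the proposition is then asserted without further proof. Your write-up simply unpacks that sentence by checking that a single $k$-associative rewrite on a subexpression corresponds under the bijection to a single $k$-rotation at the matching node, which is exactly the content the paper leaves implicit.
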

A right (respectively left) $k$-rotation is a composition of $k$ right (respectively left) $1$-rotations, and hence corresponds to an upward (respectively downward) chain of length $k$ in the Tamari lattice. 
We illustrate this in Figure~\ref{fig:3-rotation}, decomposing a $3$-rotation into three $1$-rotations.
\begin{figure}[h]
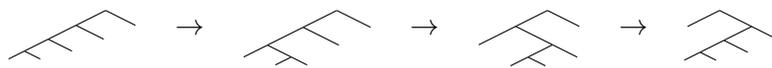

\[
\Tree[. [. [. [. {\nola} {\nola} ] {\nola} ] {\nola} ] {\nola} ] \quad\rightarrow\quad
\Tree[. [. [. {\nola} [. {\nola} {\nola} ] ] {\nola} ] {\nola} ]
\quad\rightarrow\quad
\Tree[. [. {\nola}  [. [. {\nola}  !\qsetw{.2in} {\nola} ] {\nola} ] ]  !\qsetw{.5in} {\nola} ]
\quad\rightarrow\quad
\Tree[. {\nola} !\qsetw{-.2in} [. [. [. {\nola} {\nola} ] {\nola} ] {\nola} ] ] 
\]
\caption{Decomposition of a right 3-rotation into three right 1-rotations}\label{fig:3-rotation}
\end{figure}
Thus the $k$-associative order is weaker than the Tamari order.
We generalize this below.

\begin{proposition}
If $k=pk'$ for a positive integer $p$, then a right (respectively, left) $k$-rotation may be decomposed into a sequence of $p$ right (respectively, left) $k'$-rotations.
Consequently, the $k$-associative order is weaker than the $k'$-associative order.
\end{proposition}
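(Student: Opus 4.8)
The plan is to prove the rotation statement by an explicit induction and then read off the comparison of orders as a formal consequence. Throughout, write $L[u_0,u_1,\dots,u_m]$ for the left comb $u_0\wedge u_1\wedge\dots\wedge u_m=((\cdots(u_0\wedge u_1)\wedge u_2)\wedge\cdots)\wedge u_m$ on binary trees $u_0,\dots,u_m$ (so $L[u]=u$), and record the one identity to be used constantly: concatenation of left combs is associative, that is, $L[L[u_0,\dots,u_a],u_{a+1},\dots,u_m]=L[u_0,\dots,u_m]$. In this notation, a right $k$-rotation at a node $v$ replaces the subtree $s=L[t_0,t_1,\dots,t_{k+1}]$ rooted at $v$ by $s'=t_0\wedge L[t_1,\dots,t_{k+1}]$, and a right $k'$-rotation is the case $k=k'$ of this. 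It suffices to treat right rotations, since a left $k$-rotation is by definition the inverse of a right $k$-rotation, so that any factorization of the latter into right $k'$-rotations yields, read in reverse, a factorization of the former into left $k'$-rotations.

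Assume $k=pk'$. First I would fix coordinates inside $s$: let $v=v_0,v_1,\dots,v_k$ be the nodes on the leftmost path of $s$, so $v_i$ has left depth $i$ in $s$, the maximal subtree rooted at $v_i$ is $L[t_0,\dots,t_{k+1-i}]$, and $t_0$ has left depth $k+1$. The goal is to lower $t_0$ to left depth $1$ using $p$ right $k'$-rotations performed at the nodes $v_{(p-1)k'},v_{(p-2)k'},\dots,v_{0}=v$, each lowering $t_0$ by exactly $k'$. Precisely, I would prove by induction on $j=0,1,\dots,p$ that after $j$ such rotations the current tree $s^{(j)}$ differs from $s$ only in that the subtree rooted at $v_{(p-j)k'}$, which in $s$ equals $L[t_0,\dots,t_{jk'+1}]$, has been replaced by $t_0\wedge L[t_1,\dots,t_{jk'+1}]$; for $j=0$ this reads $s^{(0)}=s$. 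For the inductive step, in $s^{(j)}$ the subtree rooted at $v_{(p-j-1)k'}$ agrees with the one in $s$ except at its descendant $v_{(p-j)k'}$, so using the concatenation identity it equals $L[\,t_0\wedge L[t_1,\dots,t_{jk'+1}],\ t_{jk'+2},\ \dots,\ t_{(j+1)k'+1}\,]=L[\,t_0,\ L[t_1,\dots,t_{jk'+1}],\ t_{jk'+2},\ \dots,\ t_{(j+1)k'+1}\,]$, which is a left comb on exactly $k'+2$ arguments and hence a legal site for a right $k'$-rotation; performing it there turns that subtree into $t_0\wedge L[\,L[t_1,\dots,t_{jk'+1}],\ t_{jk'+2},\dots,t_{(j+1)k'+1}\,]=t_0\wedge L[t_1,\dots,t_{(j+1)k'+1}]$, which is the $j{+}1$ case. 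Taking $j=p$ (so $v_{(p-j)k'}=v_0=v$) gives $s^{(p)}=t_0\wedge L[t_1,\dots,t_{k+1}]=s'$. Since all $p$ rotations took place strictly inside the subtree rooted at $v$, the same sequence of moves is legal in the ambient tree $t\in\T_n$, proving that a right $k$-rotation is a composition of $p$ right $k'$-rotations.

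Finally, the statement about orders is then immediate: if $t\le t'$ in the $k$-associative order, then $t$ is obtained from $t'$ by finitely many left $k$-rotations, each a composition of $p$ left $k'$-rotations, so $t$ is obtained from $t'$ by finitely many left $k'$-rotations and hence $t\le t'$ in the $k'$-associative order; thus the $k$-associative order is contained in, and so weaker than, the $k'$-associative order. The main obstacle -- really the only thing needing care -- is the bookkeeping in the inductive step, especially the point that the subtree being rotated has $k'+2$ arguments rather than $k'+1$; this is exactly the observation that, inside the enclosing left comb, the ``atom'' $t_0\wedge L[t_1,\dots,t_{jk'+1}]$ opens up into the two slots $t_0$ and $L[t_1,\dots,t_{jk'+1}]$. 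Everything else is routine, and the case $k'=1$ recovers the preceding proposition.
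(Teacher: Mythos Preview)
Your proof is correct and, once the notation is unpacked, gives exactly the same decomposition as the paper: the paper inducts on $p$, writing a right $(p{+}1)k'$-rotation as a right $pk'$-rotation at $v_{k'}$ followed by a right $k'$-rotation at the root, and unwinding that induction produces precisely your sequence of $k'$-rotations at $v_{(p-1)k'},v_{(p-2)k'},\dots,v_0$. The paper's recursive formulation is shorter and avoids the explicit bookkeeping, while your version makes the intermediate trees $s^{(j)}$ completely transparent; but the underlying argument is the same.
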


\begin{proof}
 We prove this for right rotations, and the result for left rotations follows.
 Assume $k=pk'$ for some positive integer $p$ and induct on $p$.
 The base case $k=k'$ is trivial.
 For $k=(p+1)k'$, we decompose a right $k$-rotation $t_0\wedge t_1\wedge \cdots\wedge t_{k+1}\overset{k}{\longrightarrow} t_0\wedge(t_1\wedge\cdots\wedge t_{k+1})$ into a right $pk'$-rotation
 \[ t_0\wedge t_1 \wedge\cdots\wedge t_{pk'+1}\wedge t_{pk'+2}\wedge\cdots\wedge t_{pk'+k'+1}\overset{pk'}{\longrightarrow} t_0\wedge(t_1\wedge\cdots\wedge t_{pk'+1})\wedge t_{pk'+2}\wedge\cdots\wedge t_{pk'+k'+1} \]
followed by a right $k'$-rotation
\[t_0\wedge(t_1\wedge\cdots\wedge t_{pk'+1})\wedge t_{pk'+2}\wedge\cdots\wedge t_{pk'+k'+1}  \overset{k'}{\longrightarrow} t_0\wedge(t_1\wedge\cdots\wedge t_{pk'+1}\wedge t_{pk'+2}\wedge\cdots\wedge t_{pk'+k'+1}). \]
Applying the inductive assumption to the above right $pk'$-rotation completes the proof.
\end{proof}

\subsection{Plane trees}

Contracting each northeast-southwest edge of a binary tree gives a plane tree.
This defines a bijection from binary trees with $n+1$ leaves to plane trees with $n+1$ (total) nodes.
It is essentially the inverse of the Knuth transform, which sends a plane tree to its left-child right-sibling representation.
See, e.g.,~\cite{AnalyticComb,Knuth}.
We give an example of our bijection in Figure~\ref{fig:BijectionTrees}.

\begin{figure}[h]
{\scriptsize \[
\Tree[. [. [. 0 !\qsetw{0in} [. [. 1 [. 2 3 ] ] 4 ] ] 5  ] [. [. 6 !\qsetw{.5in} 7 ] 8 ] ]
\qquad\qquad \raisebox{-30pt}{$\overset{}{\longleftrightarrow}$} \qquad\qquad
 \begin{picture}(70,10) 
  \put(10,-45){\includegraphics[scale=.32]{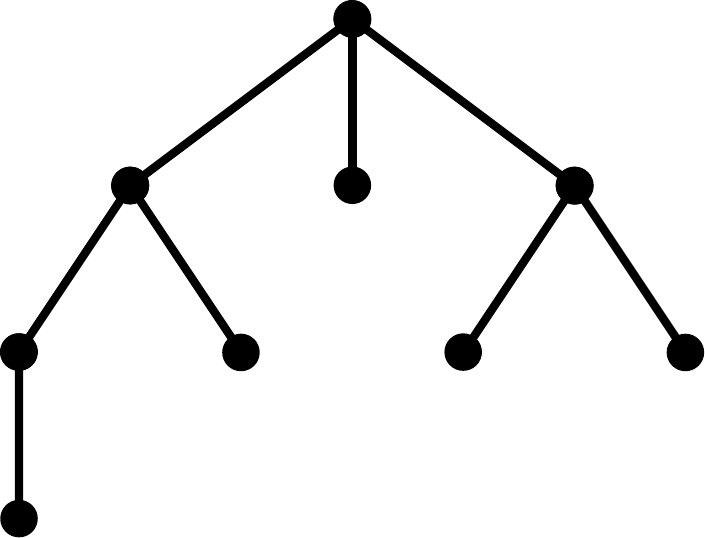}}
  \put(40.5,6){0}
  \put(17,-10){1}
  \put(5,-28){2}
  \put(10,-53){3}
  \put(30,-37){4}
  \put(40.5,-22){5}
  \put(64,-10){6}
  \put(50,-37){7}
  \put(72,-37){8}
 \end{picture}
\] }
\caption{A bijection between binary trees and plane trees}\label{fig:BijectionTrees}
\end{figure}

Let $T$ be the plane tree corresponding to some $t\in\T_n$.
The mapping $t\mapsto T$ associates leaf $i$ of $t$ to node $v_i$ of $T$ for $0\le i\le n$.
As the leaves labeled $0,\dotsc,n$ are ordered left-to-right, the nodes $v_0,\dotsc,v_n$ are ordered according to the \demph{pre-order}.
This order may also be obtained by first labeling the root of $T$ and then labeling the nodes of the subtrees of the root recursively in the same way, proceeding from the leftmost subtree to the rightmost one.
We define the \demph{multi-degree} of $T$ to be the degree vector $\md(T):=(\md_0(T),\ldots,\md_n(T))$, where $\md_i(T)$ is the degree of $v_i$ for each $i$.

\begin{proposition}\label{prop:d-del-corresp}
Let $t\in\T_n$ be a binary tree with left depth $\ld(t)=(\ld_0,\ldots,\ld_n)$ and $T$ be a plane tree with multi-degree $\md(T)=(\md_0,\ldots,\md_n)$.
If $t$ corresponds to $T$ via the above bijection, then 
\[ \ld_i = \md_0+\cdots+\md_i-i, \qquad \forall i\in\{0,1,\ldots,n\}\,. \]
\end{proposition}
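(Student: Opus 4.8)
The plan is to prove this by induction on $n$, mirroring the recursive structure that defines $\mathcal{D}_n$ and the bijection between binary trees and plane trees.

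First I would set up the base case $n = 0$: the unique binary tree in $\mathcal{T}_0$ is a single leaf, corresponding to the single-node plane tree, and both sides of the identity read $0 = 0 - 0$ trivially.

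For the inductive step, I would decompose $t \in \mathcal{T}_n$ as $t = s \wedge u$ where $s \in \mathcal{T}_{i-1}$ and $u \in \mathcal{T}_{n-i}$ for the appropriate $i \in [n]$ (this is exactly the decomposition implicit in the definition of $\mathcal{D}_n$). Under the bijection, the plane tree $T$ corresponding to $t = s\wedge u$ is obtained by taking the plane tree $U$ corresponding to $u$ and attaching the plane tree $S$ corresponding to $s$ as a new leftmost subtree of the root of $U$; equivalently, the root of $T$ is the root of $U$ with its degree increased by one, $S$ hangs off that root to the left of the subtrees of $U$, and the pre-order labeling of $T$ visits the root, then all of $S$, then the rest of $U$. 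Concretely: if $\md(U) = (\md_0^U,\ldots,\md_{n-i}^U)$ and $\md(S) = (\md_0^S,\ldots,\md_{i-1}^S)$, then $\md(T) = (\md_0^U + 1,\ \md_0^S,\ldots,\md_{i-1}^S,\ \md_1^U,\ldots,\md_{n-i}^U)$. On the binary-tree side, leaves $0,\ldots,i-1$ of $t$ are the leaves of the left subtree $s$, each with its left depth increased by one, and leaves $i,\ldots,n$ are the leaves of $u$ with left depth unchanged: $\ld(t) = (\ld_0^s + 1,\ldots,\ld_{i-1}^s + 1,\ \ld_0^u,\ldots,\ld_{n-i}^u)$.

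With these two descriptions in hand, the proof is a matter of checking the formula $\ld_j = \md_0 + \cdots + \md_j - j$ separately in three ranges of $j$: for $j = 0$, where $\ld_0 = \ld_0^s + 1$ must equal $\md_0^U + 1 - 0$, which follows from the inductive hypothesis applied to $u$ at index $0$ (giving $\ld_0^u = \md_0^U$, and noting $\ld_0^u = 0$ since leaf $0$ of $u$ has no left steps above it once we... wait — more carefully, one uses that the prefix sums of $\md(T)$ up to $j=0$ equal $\md_0^U+1$ and that $\ld_0^s$ relates to $\md_0^S$ via the inductive hypothesis for $s$); for $1 \le j \le i-1$, where the prefix sum $\md_0^T + \cdots + \md_j^T = (\md_0^U + 1) + (\md_0^S + \cdots + \md_{j-1}^S)$ and one invokes the inductive hypothesis for $s$ at index $j-1$; and for $i \le j \le n$, where the prefix sum picks up the full contribution of $S$ — namely $\sum_{\ell=0}^{i-1}\md_\ell^S = i - 1$ since a plane tree on $i$ nodes has $i-1$ edges — plus the prefix sum of $\md(U)$ up to the appropriate index, and one invokes the inductive hypothesis for $u$. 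I expect the main obstacle to be purely bookkeeping: getting the index shifts exactly right in all three ranges and correctly accounting for the ``$+1$'' at the root and the ``edge count $= $ node count $-1$'' fact that makes the $S$-block contribute precisely enough to cancel the shift in $u$'s left depths. There is no conceptual difficulty — once the two recursive descriptions of $\ld$ and $\md$ under $t \mapsto s\wedge u$ are written down, the identity falls out by summation.
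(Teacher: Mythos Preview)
Your inductive strategy is sound, but you have the recursive description of the plane tree $T$ backwards. Under the bijection of the paper (contraction of left edges), when $t = s\wedge u$ the plane tree $T$ is obtained by attaching $U$ as the new \emph{rightmost} subtree of the root of $S$, not by attaching $S$ under the root of $U$. Concretely,
\[
\md(T) = (\md_0^S + 1,\ \md_1^S,\ldots,\md_{i-1}^S,\ \md_0^U,\ldots,\md_{n-i}^U),
\]
not $(\md_0^U+1,\ \md_0^S,\ldots,\md_{i-1}^S,\ \md_1^U,\ldots,\md_{n-i}^U)$. You can see your version fails already at $j=0$: you need $\ld_0^s + 1 = \md_0^T$, and since $\ld_0^s = \md_0^S$ by the inductive hypothesis for $s$, this forces $\md_0^T = \md_0^S + 1$, not $\md_0^U + 1$. (Your ``wait'' in the $j=0$ case was exactly the symptom: the claim $\ld_0^u = 0$ is false in general.) A quick sanity check against the paper's Figure~\ref{fig:BijectionTrees} confirms this: there $\md_0^T = 3 = \md_0^S + 1$ while $\md_0^U = 2$ as well, but smaller examples such as $t = \comb_0 \wedge u$ with $u \in \T_2$ break your formula.

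With the corrected recursion the induction goes through cleanly in just two ranges. For $0\le j\le i-1$ the prefix sum is $(\md_0^S+\cdots+\md_j^S)+1$ and the inductive hypothesis for $s$ gives the result; for $i\le j\le n$ write $j=i+m$, use $\sum_{\ell=0}^{i-1}\md_\ell^S = i-1$, and the inductive hypothesis for $u$ at index $m$ finishes it.

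For comparison, the paper does not induct on $n$. It argues directly that $\ld_0 = \md_0$ and then establishes the increment $\ld_i = \ld_{i-1} + \md_i - 1$ for each $i\in[n]$ by tracing the short path in $t$ from leaf $i{-}1$ to leaf $i$ and counting left steps, splitting into the two cases $\md_{i-1}=0$ and $\md_{i-1}\neq 0$. Your (corrected) inductive argument is a perfectly good alternative.
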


\begin{proof}
 As before, we label leaves $0,\dotsc,n$ of $t$ left-to-right, and we label the corresponding nodes $v_0,\dotsc,v_n$ of $T$ according to the pre-order.
 The equality $\ld_0=\md_0$ is apparent from the construction of the bijection.
It remains to show that $\ld_i=\ld_{i-1}+\md_i-1$ for all $i\in[n]$.

First assume $\md_{i-1}=0$. Then $v_{i-1}$ of $T$ has no children, so leaf $i{-}1$ of $t$ is a right child.
 Thus the shortest path from leaf $i{-}1$ to leaf $i$ in $t$ is one step northwest, one step northeast, one step southeast, and $p\ge 0$ steps southwest.
 Since the first and third steps of this do not count towards left depth, $\ld_i=\ld_{i-1}-1+p$.
 Using pre-order, we see node $v_i$ of $T$ is an adjacent right sibling of node $v_{i-1}$ as $v_{i-1}$ has no children.
 Consequently, the degree $\md_i$ of $v_i$ is the number of southwest steps $p$ given above.
 Substitution gives $\ld_i=\ld_{i-1}+\md_i-1$.
  
 Now suppose $\md_{i-1}\neq 0$.
 Then, leaf $i{-}1$ of $t$ is a left child, so the shortest path from leaf $i{-}1$ to leaf $i$ in $t$ is one step northeast, one step southeast, and $p\ge 0$ steps southwest.
 Since the second step does not count towards left depth, we (again) have $\ld_i=\ld_{i-1}-1+p$ and $p=\md_i$.
\end{proof}

The following result is stated in~\cite[(19) and (20)]{Takacs} without proof.
 
\begin{proposition}\label{prop:degree}
The map $T\mapsto\md(T)$ is a bijection from plane trees with $n{+}1$ nodes to sequences $(\md_0,\ldots,\md_n)$ of $n+1$ nonnegative integers satisfying $d_0+\cdots+d_n=n$ and $\md_0 + \cdots + \md_{i-1} \ge i $ for all $i\in[n]$.
\end{proposition}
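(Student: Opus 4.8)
The plan is to prove Proposition~\ref{prop:degree} by exhibiting the map $T \mapsto \md(T)$ as a bijection, establishing both that its image lies in the claimed set of sequences and that it is invertible. The key tool is the \emph{pre-order traversal} of plane trees, which linearizes the node set as $v_0, \dots, v_n$, together with the well-known correspondence between plane trees and \L ukasiewicz-type lattice paths. First I would observe that the identity $\md_0 + \cdots + \md_n = n$ is immediate: a plane tree with $n+1$ nodes has $n$ edges, and each edge is counted exactly once as a contribution to the degree of its upper endpoint (its parent), so the degrees sum to the number of edges.

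Next I would establish the partial-sum inequalities $\md_0 + \cdots + \md_{i-1} \ge i$ for all $i \in [n]$. The conceptual reason is that in a pre-order traversal, by the time we have visited $v_0, \dots, v_{i-1}$ we have "promised" the existence of $\md_0 + \cdots + \md_{i-1}$ children (these are the nodes subtended by those already visited); among the first $i$ nodes $v_0, \dots, v_{i-1}$, every one except the root $v_0$ is itself such a promised child, so at least $i-1$ of those promises are accounted for by nodes already seen, and since the tree is connected and not yet exhausted ($i \le n$) there must be at least one more promised child still to come — this gives $\md_0 + \cdots + \md_{i-1} \ge (i-1) + 1 = i$. I would make this rigorous by induction on $i$, tracking the quantity $r_i := \md_0 + \cdots + \md_{i-1} - (i-1)$, which equals the number of subtrees still "open" (roots of not-yet-started subtrees sitting to the right of the path from $v_{i-1}$ to the root); one checks $r_1 = \md_0 - 0 \ge 1$ since a tree with $n+1 \ge 2$ nodes has a root of positive degree, and $r_{i+1} = r_i + \md_i - 1$, and that $r_i \ge 1$ for $1 \le i \le n$ because the traversal only terminates after $v_n$ (equivalently, $r_{n+1} = 0$ but $r_i \ge 1$ for $i \le n$). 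Phrasing this via the lattice path with steps $\md_i - 1$, the condition "$r_i \ge 1$ for all $i \le n$, and $r_{n+1} = 0$" is exactly the standard ballot/cycle-lemma characterization of such paths.

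For surjectivity and injectivity I would run the inverse construction: given a sequence $(\md_0, \dots, \md_n)$ satisfying the two conditions, build the tree by the recursive rule implicit in the pre-order — $v_0$ is the root with $\md_0$ children; then the children's subtrees are filled in greedily from the remaining entries, the first subtree consuming the maximal-length prefix of $\md_1, \md_2, \dots$ that is itself a valid "subtree sequence" (i.e.\ the first index $j$ at which the running partial sum of $\md_m - 1$ first drops to $-1$), and so on. The partial-sum inequalities guarantee that this greedy parsing never runs out of entries prematurely and that each subtree sequence again satisfies the hypotheses with a smaller length, so the recursion is well-defined and terminates; the equality $\sum \md_i = n$ guarantees all $n+1$ entries are consumed. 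That this construction is a two-sided inverse of $T \mapsto \md(T)$ follows by induction on $n$, since both $T \mapsto \md(T)$ and the inverse respect the decomposition of a plane tree into its root and the left-to-right list of root subtrees.

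The main obstacle I expect is purely bookkeeping: making the "greedy prefix parse" precise — i.e.\ proving that the first place the partial sums $\sum_{m=1}^{j} (\md_m - 1)$ reach $-1$ correctly delimits the first root-subtree, and that the residual sequence still satisfies the inequality $\ge i$ hypothesis — and then organizing the induction cleanly so that the base case $n = 0$ (the single-node tree, sequence $(0)$) and the recursive step fit together. None of this is deep, but it is the part where one must be careful about off-by-one indexing in the partial sums; an alternative that sidesteps some of this is to invoke the cycle lemma directly to count the sequences and separately cite the classical bijection between plane trees and their pre-order degree sequences, but since the paper wants a self-contained statement I would include the explicit inverse.
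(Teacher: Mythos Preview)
Your proposal is correct, but the paper takes a somewhat different route. For the forward direction (the partial-sum inequalities), the paper does not track ``open'' subtrees directly; instead it invokes the already-proved Proposition~\ref{prop:d-del-corresp}, which says $\ld_{i-1} = \md_0+\cdots+\md_{i-1}-(i-1)$ for the left depth of the corresponding binary tree, and then uses $\ld_{i-1}\ge 1$ for $i\le n$. For the inverse, the paper builds the tree \emph{iteratively} rather than recursively: starting from a single marked node, at step $i$ attach $\md_{i-1}$ children to the most recently marked node and mark the next node in pre-order; the inequalities guarantee there is always a ``next'' node to mark. Your approach has the advantage of being self-contained (no appeal to the binary-tree bijection) and makes the \L ukasiewicz/ballot structure explicit, at the cost of the prefix-parse bookkeeping you flagged; the paper's approach is shorter in context and its iterative inverse avoids that bookkeeping entirely.
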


\begin{proof}
Let $T$ be a plane tree with multi-degree $\md(T)=(\md_0,\ldots,\md_n)$.
Counting non-root nodes of $T$ gives $d_0+\cdots+d_n=n$.
Suppose $t$ is the corresponding binary tree with left depth $\ld(t)=(\ld_0,\ldots,\ld_n)$.
Since $\delta_i\ge1$ unless $i=n$, Proposition~\ref{prop:d-del-corresp} implies 
\[ \md_0+\cdots+\md_{i-1}=\delta_{i-1}+i-1\ge i,\quad\forall i\in[n].\]

To show $d$ is a bijection, it suffices to construct its inverse.
Let $(\md_0,\ldots,\md_n)$ be a sequence of nonnegative integers satisfying $d_0+\cdots+d_n=n$ and $\md_0 + \cdots + \md_{i-1} \ge i $ for all $i\in[n]$.
We construct the unique plane tree $T$ with $\md(T)=(\md_0,\ldots,\md_n)$.
Let tree $T_0$ have a single node, and mark that node.
For $i=1,\ldots,n$, we construct $T_i$ by adding $\md_{i-1}$ children to the most recently marked node of $T_{i-1}$ and then marking the next node in $T_{i}$ according to pre-order.
This is possible at each step, since $\md_0+\cdots+\md_{i-1} \ge i$ for all $i\in[n]$.
The tree $T_n$ constructed in the final step is the unique plane tree with $\md(T)=(\md_0,\ldots,\md_n)$.
\end{proof}

We say two plane trees are \demph{$k$-equivalent} if their corresponding binary trees are $k$-equivalent.
We also define an \demph{up (respectively down) $k$-slide} on a plane tree $T$ to be the operation induced by a left (respectively right) $k$-rotation on the binary tree corresponding to $T$.

We describe a general up $k$-slide in more detail.
Suppose $T$ has nodes $v_0,\ldots,v_n$, and let $T_1,\ldots,T_\ell$ be the subtrees (ordered left-to-right) of a node $v_j$ with parent $v_i$.
If $\ell\ge k$, we may apply an up $k$-slide at $v_j$, giving another plane tree $T'$ with $n+1$ nodes by moving $T_{\ell-k+1},\ldots,T_{\ell}$ to new positions directly below $v_i$ and to the immediate right of $v_j$.
Although $T_{\ell-k+1},\ldots,T_\ell$ are moved, their positions in $T$ and $T'$ are the same according to the pre-order.
Thus the relation between the multi-degrees $\md(T') = (\md'_0,\ldots,\md'_n)$ and $\md(T) = (\md_0,\ldots,\md_n)$ is 
\begin{equation}\label{eq:degree} 
\md'_i = \md_i+k,\quad \md'_j = \md_j-k, \quad\text{and}\quad \md'_h = \md_h \quad \forall h\notin\{i,j\} \quad(i<j).
\end{equation}
We give an example of an up $2$-slide in Figure~\ref{fig:2-slide}, which corresponds to the following change in multi-degree: $(1,{\color{blue}3},0,{\color{blue}3},0,{\color{red}2,0,0,0},0)
\mapsto
(1,{\color{blue}5},0,{\color{blue}1},0,{\color{red}2,0,0,0},0)$.
\begin{figure}[h]
\[
\includegraphics[scale=.32]{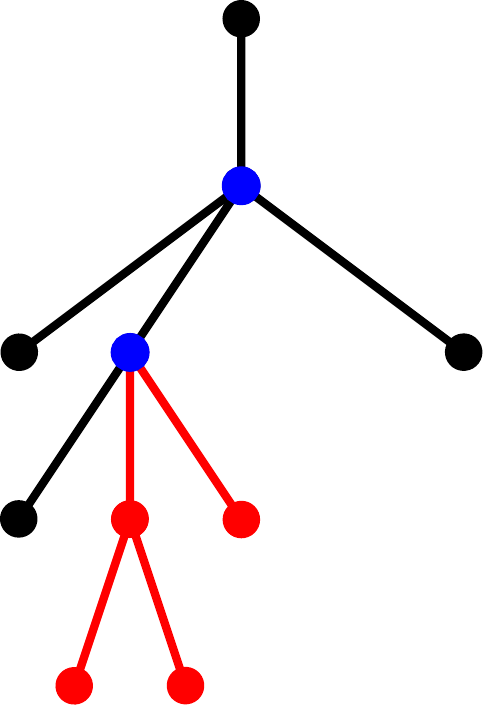}
\qquad
\raisebox{35pt}{$\longmapsto$}
\qquad
\raisebox{14.5pt}{\includegraphics[scale=.32]{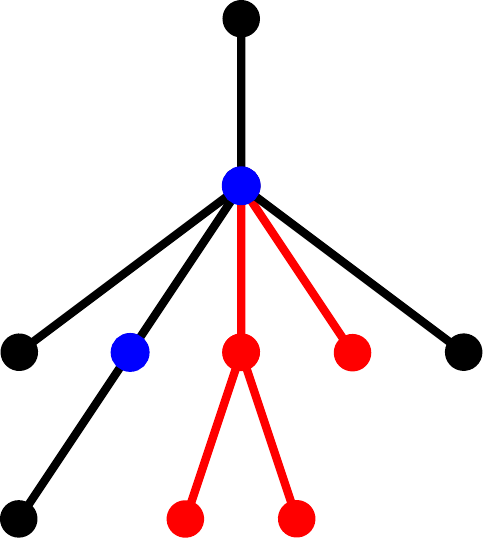}}
\]
\caption{Up 2-slide}\label{fig:2-slide}
\end{figure}

Now suppose the roots of subtrees $T_1,\dotsc,T_\ell$ are $u_1,\dotsc,u_\ell$, ordered left-to-right.
If $\ell\ge k+1$ we may apply a down $k$-slide at $v_j$ for any choice of $h\in[\ell-k]$.
This gives a plane tree $T''$ by moving $T_{h+1},\ldots,T_{h+k}$ down so that they are rooted at $u_h$ while preserving pre-order.

The $k$-slides generate a partial order on plane trees, which is called the \demph{$k$-associative order} as it is equivalent to the $k$-associative order on binary trees.
Note that an up (respectively down) $k$-slide on a plane tree gives a smaller (respectively larger) plane tree in the $k$-associative order, since it corresponds to a left (respectively right) $k$-rotation of the corresponding binary trees.

\subsection{Results on modular Catalan numbers via trees}
We say a plane tree or binary tree is \demph{$k$-minimal} or \demph{$k$-maximal} if it is minimal or maximal in its $k$-equivalence class.
We investigate $k$-minimality and $k$-maximality for both types of trees.

\begin{proposition}\label{prop:plane1}
A plane tree is $k$-maximal (respectively $k$-minimal) if and only if every node (respectively non-root node) has degree $\le k$ (respectively $<k$).
Furthermore, each $k$-equivalence class of plane trees has a unique minimal representative.
\end{proposition}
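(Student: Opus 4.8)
The plan is to establish the three assertions in order, each falling out quickly from the $k$-slide descriptions above together with Proposition~\ref{prop:degree}. First I would prove the characterizations of $k$-maximal and $k$-minimal plane trees. A plane tree $T$ is maximal in its $k$-equivalence class exactly when no down $k$-slide can be applied to it: since the $k$-associative order on plane trees is generated by the $k$-slides (with down $k$-slides going up and up $k$-slides going down), any strict increase above $T$ must begin with a down $k$-slide, while conversely one down $k$-slide already produces a strictly larger tree. By the definition of a down $k$-slide, such a move exists precisely when some node has $\ell\ge k+1$ subtrees, i.e.\ degree at least $k+1$; hence $T$ is $k$-maximal iff every node has degree $\le k$. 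Symmetrically, an up $k$-slide at a node $v_j$ requires $v_j$ to have a parent and to have degree $\ge k$, so $T$ is $k$-minimal iff every non-root node has degree $<k$.

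Next I would record the invariant driving uniqueness: if $T$ and $T'$ are $k$-equivalent then $\md_i(T)\equiv \md_i(T')\pmod{k}$ for every $i$. This is immediate from~\eqref{eq:degree}, since a single up $k$-slide---and hence its inverse, a down $k$-slide---changes exactly two coordinates of the multi-degree, each by $\pm k$, and $k$-equivalence means being joined by finitely many such slides.

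Finally I would deduce existence and uniqueness of the minimal representative. Existence is free: a $k$-equivalence class is a finite poset, hence has a minimal element, which is $k$-minimal by the first step. For uniqueness, suppose $T$ and $T'$ are both $k$-minimal and $k$-equivalent. For each non-root node $v_i$ we have $\md_i(T),\md_i(T')\in\{0,1,\dots,k-1\}$ by $k$-minimality and $\md_i(T)\equiv\md_i(T')\pmod{k}$ by the invariant, so they are equal. Since the non-root nodes are $v_1,\dots,v_n$ and $\md_0+\dots+\md_n=n$ for both trees by Proposition~\ref{prop:degree}, the root degrees agree as well, so $\md(T)=\md(T')$; as $T\mapsto\md(T)$ is injective (Proposition~\ref{prop:degree}), $T=T'$.

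I do not expect a serious obstacle here, since the substance is carried by Proposition~\ref{prop:degree} and~\eqref{eq:degree}. The only points needing care are the equivalence between $k$-maximality (resp.\ $k$-minimality) and the non-applicability of down (resp.\ up) $k$-slides---which uses that the $k$-associative order is generated by $k$-slides---and getting the degree thresholds right, namely $\ge k$ for an up slide at a non-root node and $\ge k+1$ for a down slide.
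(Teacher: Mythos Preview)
Your proposal is correct and follows essentially the same approach as the paper's own proof: characterize $k$-maximality/minimality via the (non-)applicability of down/up $k$-slides, use \eqref{eq:degree} to get the mod-$k$ invariance of the multi-degree, and combine with the degree bound $<k$ on non-root nodes to force equality of multi-degrees (hence of trees, via Proposition~\ref{prop:degree}). The paper compresses this into three sentences, while you spell out the details---in particular the determination of the root degree from $\sum_i \md_i=n$ and the explicit appeal to injectivity of $T\mapsto\md(T)$---but there is no substantive difference.
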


\begin{proof}
The definition of up and down $k$-slides immediately implies the first statement.
By \eqref{eq:degree}, two plane trees $k$-equivalent to each other have congruent multi-degrees modulo $k$. 
This, together with the first statement, implies the second statement.
\end{proof}

Let $\defcolor{\comb_0}$ be the unique tree with a single node.
We recursively define the \demph{left comb of length $k$} to be $\defcolor{\comb_k}:=\comb_{k-1}\wedge \comb_0$ for $k\ge1$.
The binary tree $\defcolor{\comb_k^{1}} := \comb_0\wedge \comb_k$ is useful for calculating $C_{k,n}$.
The figures below give $\comb_4$ and $\comb_4^{1}$.
\begin{figure}[h]
\hskip-90pt\Tree[. [. [. [. {\nola} {\nola} ] {\nola} ] {\nola} ] {\nola} ] \quad
\Tree[. {\nola} !\qsetw{-.5in} [. [. [. [. {\nola} {\nola} ] {\nola} ] {\nola} ] {\nola} ] ]
\caption{$\comb_4$ and $\comb_4^1$}
\end{figure}

Proposition~\ref{prop:plane1} and Proposition~\ref{prop:d-del-corresp} give the following result, Proposition~\ref{prop:binary1}, via the bijection between plane trees and binary trees.

\begin{proposition}\label{prop:binary1}
A binary tree is $k$-maximal (respectively $k$-minimal) if and only if it avoids $\comb_{k+1}$ (respectively $\comb_k^1$) as a subtree.
Furthermore, each $k$-equivalence class of $\T_n$ has a unique minimal representative.
\end{proposition}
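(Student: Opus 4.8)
The plan is to transport everything through the bijection between binary trees and plane trees established earlier, so that the statement for binary trees follows from Proposition~\ref{prop:plane1} together with Proposition~\ref{prop:d-del-corresp}. Both facts to be proved---the characterization of $k$-maximality/$k$-minimality by subtree avoidance, and uniqueness of the minimal representative in each $k$-component of $\T_n$---are already known for plane trees, so the work is purely translational.

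First I would observe that, by definition, a binary tree $t$ is $k$-maximal (resp.\ $k$-minimal) if and only if the corresponding plane tree $T$ is $k$-maximal (resp.\ $k$-minimal), since the $k$-slides on plane trees are by construction the images of the $k$-rotations on binary trees. Uniqueness of the minimal representative in each $k$-equivalence class of $\T_n$ is then immediate from the corresponding statement in Proposition~\ref{prop:plane1}, pushed back through the bijection.

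Next I would identify, under the bijection, which binary tree subtrees correspond to a node of high degree in the plane tree. The key is the explicit relation $\ld_i = \md_0 + \cdots + \md_i - i$ of Proposition~\ref{prop:d-del-corresp}, or more directly the edge-contraction description of the bijection: a node $v$ of the plane tree $T$ of degree $\ge k+1$ (for the $k$-maximal case) or $\ge k$ for a non-root node (for the $k$-minimal case) corresponds precisely to an occurrence of a left comb pattern in $t$. Concretely, a node in $T$ with $\ell$ children, whose subtrees are $T_1,\dots,T_\ell$, corresponds in $t$ to the binary subtree $((\cdots((s_1\wedge t_1)\wedge t_2)\cdots)\wedge t_\ell)$ where $s_1$ and the $t_j$ are the binary trees corresponding to the plane subtrees hanging off; reading off the spine of left edges, having $\ell \ge k+1$ children means $t$ contains $\comb_{k+1}$ as a subtree, and having a non-root node of degree $\ge k$ means $t$ contains $\comb_0 \wedge \comb_k = \comb_k^1$ as a subtree (the extra bottom leaf $\comb_0$ encoding the fact that $v$ is a non-root node, i.e.\ has a parent). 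Conversely, the absence of such a subtree in $t$ forces all degrees in $T$ to satisfy the bound in Proposition~\ref{prop:plane1}. I would spell out this dictionary carefully, perhaps with a small figure matching $\comb_{k+1}$ and $\comb_k^1$ to the relevant plane-tree configurations, as this is the one place where an off-by-one in the comb length or a confusion about the root could creep in.

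The main obstacle I expect is precisely that bookkeeping: making sure the left comb lengths $k+1$ versus $k$, and the distinction between a node and a non-root node in $T$, match the subtree patterns $\comb_{k+1}$ versus $\comb_k^1$ exactly, including correctly handling the edge case where the relevant plane-tree node is the root (which has no parent, hence no extra leaf $\comb_0$, hence the $k+1$ bound rather than $<k$). Once the dictionary is pinned down, each implication is a one-line consequence of Proposition~\ref{prop:plane1}, and the uniqueness claim is inherited verbatim from the plane-tree case.
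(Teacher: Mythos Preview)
Your proposal is correct and follows exactly the route the paper indicates: the paper's entire proof is the single sentence that Proposition~\ref{prop:binary1} follows from Proposition~\ref{prop:plane1} and Proposition~\ref{prop:d-del-corresp} via the bijection between binary trees and plane trees, and you have correctly unpacked what that sentence means. Your one area of caution is well placed: the dictionary identifying a plane-tree node of large degree with a left-comb pattern in the binary tree is the only nontrivial step, and while your description of the $\comb_k^1$ case (``the extra bottom leaf $\comb_0$ encoding the fact that $v$ is a non-root node'') is a bit loose, the content is right and you clearly know to check the off-by-one carefully; alternatively, note that this part can be read off directly from the definition of left and right $k$-rotations without the plane-tree detour, since the skeletons of the rotation patterns $(t_0\wedge\cdots\wedge t_k)\wedge t_{k+1}$ and $t_0\wedge(t_1\wedge\cdots\wedge t_{k+1})$ are precisely $\comb_{k+1}$ and $\comb_k^1$.
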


\begin{proposition}\label{prop:plane2}
Two plane trees are $k$-equivalent if and only if their multi-degrees are congruent modulo $k$.
\end{proposition}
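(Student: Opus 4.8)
The plan is to prove the two implications separately, leaning on Proposition~\ref{prop:plane1} and Proposition~\ref{prop:degree}, which are already in hand. The ``only if'' direction is the easy half: $k$-equivalence of two plane trees means they are joined by a finite sequence of up and down $k$-slides, and by~\eqref{eq:degree} each such slide changes the multi-degree only by adding $k$ to one entry and subtracting $k$ from another. Hence the multi-degree is invariant modulo $k$ along the entire sequence, so $k$-equivalent plane trees have congruent multi-degrees.

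For the ``if'' direction, suppose $S$ and $T$ are plane trees with $n+1$ nodes and $\md(S)\equiv\md(T)\pmod k$. I would pass to their $k$-minimal representatives $S_0$ and $T_0$, which exist and are unique by Proposition~\ref{prop:plane1}. By the ``only if'' direction just established, $\md(S_0)\equiv\md(S)\equiv\md(T)\equiv\md(T_0)\pmod k$. Proposition~\ref{prop:plane1} also tells us that every non-root node of a $k$-minimal plane tree has degree strictly less than $k$, so the non-root entries of $\md(S_0)$ and of $\md(T_0)$ all lie in $\{0,1,\dotsc,k-1\}$; being congruent modulo $k$, they must agree entry by entry. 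Since the entries of the multi-degree of any $(n{+}1)$-node plane tree sum to $n$ by Proposition~\ref{prop:degree}, the root entries agree as well, so $\md(S_0)=\md(T_0)$. By the injectivity in Proposition~\ref{prop:degree}, $S_0=T_0$, and therefore $S$ and $T$ are $k$-equivalent to a common plane tree, hence to each other.

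I do not anticipate a genuine obstacle here; the one thing to get right is that both ingredients are genuinely used: the ``non-root degree $<k$'' characterization of $k$-minimality from Proposition~\ref{prop:plane1} is what pins down the non-root entries of the minimal representative from the residues, and the injectivity of $T\mapsto\md(T)$ from Proposition~\ref{prop:degree} is what upgrades equality of multi-degrees to equality of plane trees. It is also worth noting explicitly that the root entry is forced by the remaining entries through the total-degree count $d_0+\cdots+d_n=n$, so that no residue information about the root is needed beyond consistency.
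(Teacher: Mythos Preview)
Your proof is correct and follows essentially the same route as the paper's: establish the ``only if'' direction from~\eqref{eq:degree}, then for the converse pass to the unique $k$-minimal representatives, use Proposition~\ref{prop:plane1} to force equality of their multi-degrees, and conclude the representatives coincide. Your write-up is in fact more explicit than the paper's, spelling out why the non-root entries must agree (residues in $\{0,\dots,k-1\}$), why the root entry is then forced (the total-degree identity from Proposition~\ref{prop:degree}), and why equal multi-degrees give equal trees (injectivity in Proposition~\ref{prop:degree}); the paper compresses all of this into a single appeal to Proposition~\ref{prop:plane1}.
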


\begin{proof}
The ``only if'' part follows from \eqref{eq:degree}.
For the ``if'' part, suppose $T_1$ and $T_2$ are two plane trees with multi-degrees $\md(T_1) \equiv \md(T_2) \mod k$.
For $i=1,2$ let $T'_i$ be the unique minimal tree $k$-equivalent to $T_i$.
Then $\md(T'_1)\equiv \md(T'_2)\mod k$, which implies $\md(T'_1)=\md(T'_2)$ by Proposition~\ref{prop:plane1}.
Hence $T_1$ and $T_2$ are both $k$-equivalent to the same minimal representative.
\end{proof}

Proposition~\ref{prop:plane2} implies Proposition~\ref{prop:binary2}.

\begin{proposition}\label{prop:binary2}
Two binary trees are $k$-equivalent if and only if their left depths are congruent modulo $k$.
\end{proposition}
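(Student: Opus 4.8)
The plan is to push the statement through the bijection between binary trees and plane trees and then quote Proposition~\ref{prop:plane2}. So I would fix binary trees $t_1,t_2\in\T_n$, let $T_1,T_2$ be the corresponding plane trees, and recall that, by definition, $t_1$ and $t_2$ are $k$-equivalent exactly when $T_1$ and $T_2$ are. By Proposition~\ref{prop:plane2} the latter holds if and only if $\md(T_1)\equiv\md(T_2)\pmod k$, so everything reduces to showing that $\md(T_1)\equiv\md(T_2)\pmod k$ is equivalent to $\ld(t_1)\equiv\ld(t_2)\pmod k$.

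The key observation I would isolate is that the change of variables between left depths and multi-degrees supplied by Proposition~\ref{prop:d-del-corresp} is invertible over $\ZZ$. Writing $\ld(t)=(\ld_0,\dots,\ld_n)$ and $\md(T)=(\md_0,\dots,\md_n)$ for a corresponding pair, that proposition gives $\ld_i=\md_0+\dots+\md_i-i$, and solving for the $\md_i$ yields $\md_0=\ld_0$ together with $\md_i=\ld_i-\ld_{i-1}+1$ for $i\in[n]$. Both directions of this transformation have integer coefficients, so each formula remains valid after reducing all entries modulo $k$.

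Granting this, both implications are short. For the forward direction, if $\md(T_1)\equiv\md(T_2)\pmod k$ then for each $i$ we get $\ld_i(t_1)=\sum_{j\le i}\md_j(T_1)-i\equiv\sum_{j\le i}\md_j(T_2)-i=\ld_i(t_2)\pmod k$. For the converse, if $\ld(t_1)\equiv\ld(t_2)\pmod k$ then $\md_0(T_1)=\ld_0(t_1)\equiv\ld_0(t_2)=\md_0(T_2)$, and for $i\in[n]$ we have $\md_i(T_1)=\ld_i(t_1)-\ld_{i-1}(t_1)+1\equiv\ld_i(t_2)-\ld_{i-1}(t_2)+1=\md_i(T_2)\pmod k$. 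This yields the equivalence and hence the proposition.

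I do not expect a genuine obstacle here; the only subtlety worth flagging is exactly the integrality in the previous paragraph — the passage from multi-degree to left depth is unitriangular over $\ZZ$, with no denominators, so it induces a bijection on length-$(n{+}1)$ integer sequences modulo $k$, which is what forces the two congruence conditions to coincide. As an alternative I could avoid Proposition~\ref{prop:plane2} altogether and argue directly in $\T_n$: reduce each class to its unique $\comb_k^1$-avoiding representative via Proposition~\ref{prop:binary1} and check that congruent left depths must then be equal. Routing through the plane-tree statement is cleaner, so I would take that path.
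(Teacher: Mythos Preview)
Your proposal is correct and follows exactly the route the paper takes: the paper simply states that Proposition~\ref{prop:plane2} implies Proposition~\ref{prop:binary2}, and you have spelled out the implicit translation via Proposition~\ref{prop:d-del-corresp}, noting that the unitriangular integer relation between $\ld(t)$ and $\md(T)$ makes the two congruence conditions equivalent. There is nothing to add.
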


Using Proposition~\ref{prop:binary2}, we compute $\{C_{2,n}\}$.

\begin{proposition}
We have $C_{2,n}=2^{n-1}$ for $n\ge1$.
\end{proposition}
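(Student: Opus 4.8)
The plan is to use Proposition~\ref{prop:binary2}, which tells us that $k$-equivalence classes of $\T_n$ are in bijection with the set of residue vectors $(\ld_0(t), \ldots, \ld_n(t)) \bmod k$ arising from binary trees $t \in \T_n$. For $k=2$, I would first observe that the left depth vector $\ld(t) = (\ld_0, \ldots, \ld_n)$ always satisfies $\ld_n = 0$ (the last leaf is reached by taking only right steps from the root in any binary tree), and $\ld_0 = \md_0 \ge 1$ whenever $n \ge 1$ (the root has degree $2$ in the corresponding plane tree). So a $2$-equivalence class is determined by the vector $(\ld_0 \bmod 2, \ldots, \ld_{n-1} \bmod 2) \in \{0,1\}^n$ with the constraint $\ld_0 \equiv 1$, giving at most $2^{n-1}$ classes.

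The main task is then to show all $2^{n-1}$ such residue vectors are actually realized. I would do this by exhibiting, for each target vector $(\epsilon_0, \ldots, \epsilon_{n-1}) \in \{0,1\}^n$ with $\epsilon_0 = 1$, an explicit binary tree (equivalently, by Proposition~\ref{prop:d-del-corresp}, an explicit plane tree multi-degree sequence). Using the relation $\ld_i = \md_0 + \cdots + \md_i - i$ from Proposition~\ref{prop:d-del-corresp}, it is cleaner to work on the plane-tree side: I want multi-degree sequences $(\md_0, \ldots, \md_n)$ with $\sum \md_i = n$ and the ballot-type condition $\md_0 + \cdots + \md_{i-1} \ge i$ of Proposition~\ref{prop:degree}, realizing all prescribed partial-sum residues. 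A convenient choice: take $\md_i \in \{0, 2\}$ for $i < n$ and adjust. Concretely, given $(\epsilon_0, \ldots, \epsilon_{n-1})$, I would build the partial sums $\ld_i$ greedily, always keeping them as small as possible subject to the residue and the positivity constraint $\ld_i \ge 1$ for $i<n$ (and $\ld_n = 0$); then $\md_i = \ld_i - \ld_{i-1} + 1 \ge 0$ needs checking, and $\sum \md_i = n$ is automatic once $\ld_n = 0 = \ld_{-1}+ n - n$ is arranged — actually $\sum_{i=0}^n \md_i = \ld_n + (n+1) - 1 = n$ follows directly. Showing $\md_i \ge 0$ and $\md_0 + \cdots + \md_{i-1} \ge i$, i.e. $\ld_{i-1} \ge 1$, reduces to verifying the greedy construction never forces $\ld_i$ to drop below $1$ before $i = n$ and can land exactly at $0$ at step $n$; since we only ever need to change $\ld_i$ by a bounded amount to fix parity, $\ld_i \in \{1, 2\}$ for $1 \le i \le n-1$ works, with the final step $\ld_n = 0$ requiring $\ld_{n-1} \le 2$ so $\md_n = \ld_n - \ld_{n-1} + 1 \in \{-1, 0\}$ — here I'd instead allow $\ld_{n-1} = 1$ by a parity argument, or pad with one extra internal node.

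Alternatively, and perhaps more transparently, I would give a direct recursive construction of binary trees: starting from the path-like trees, note that $\comb_n$ has left depth $(n, n-1, \ldots, 1, 0)$ and $\comb_0 \wedge \comb_{n-1}$ has left depth $(1, n-1, n-2, \ldots, 1, 0)$, and more generally one can splice left combs at leaves to independently toggle the parity of each coordinate $\ld_1, \ldots, \ld_{n-1}$ while leaving the others fixed modulo $2$. Each such local modification is an up/down $1$-slide composed appropriately, so staying within the correct count. The hard part — really the only nontrivial point — is checking that these $2^{n-1}$ explicit trees are pairwise $2$-inequivalent, which is immediate from Proposition~\ref{prop:binary2} once the left-depth residue vectors are seen to be distinct, and that every residue vector with $\ld_0$ odd is hit; I expect the cleanest writeup to combine the upper bound $C_{2,n} \le 2^{n-1}$ from the two forced coordinates $\ld_0 \equiv 1$, $\ld_n \equiv 0$ with a short induction on $n$ for surjectivity: given a tree realizing a target vector for $n-1$, attach a new rightmost leaf (replace $t$ by $t \wedge \comb_0$) to extend by a forced $0$, and use a single $\comb_1$-splice at the new penultimate leaf to correct its parity if needed.
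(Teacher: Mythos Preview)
Your upper bound contains a genuine error. You claim that $\ld_0 \equiv 1 \pmod 2$, arguing that ``the root has degree $2$ in the corresponding plane tree,'' but neither statement is correct: the root of the plane tree may have any degree in $[n]$, and $\ld_0 = \md_0$ can be any integer from $1$ to $n$. For a concrete counterexample, the tree $(x_0*x_1)*(x_2*x_3)\in\T_3$ has $\ld(t) = (2,1,1,0)$, so $\ld_0 = 2$ is even. With only the genuine constraint $\ld_n = 0$, your argument yields the bound $C_{2,n}\le 2^n$, which is too weak by a factor of two.

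The constraint the paper actually uses is that $\ld_{n-1} = 1$ for every $t\in\T_n$: the penultimate leaf is always reached by following right edges from the root down to the last internal node on the rightmost path, then a single left edge, and then right edges again. Together with $\ld_n = 0$ this forces two coordinates of the residue vector and gives the correct bound $2^{n-1}$. Your inductive construction for the lower bound (attach a leaf to $t\in\T_{n-1}$ and, if necessary, alter it locally to flip the parity of the new penultimate coordinate) is essentially the paper's argument --- the paper phrases it as giving two children either to leaf $n{-}1$ or to leaf $n{-}2$ of $t$, producing left-depth vectors $(\ld_0,\ldots,\ld_{n-3},1,1,0)$ and $(\ld_0,\ldots,\ld_{n-3},2,1,0)$, respectively. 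So the surjectivity half of your plan is fine; only the identification of which coordinate is forced needs to be repaired.
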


\begin{proof}
Since any list in $\mathcal D_n$ is of the form $(\ld_0,\ldots,\ld_{n-2},1,0)$, and $\ld_i\equiv 0$ or $1$ mod $2$ for each $i$, the number of equivalence classes modulo $2$ in $\mathcal D_{n}$ is at most $2^{n-1}$.
We prove this upper bound is sharp by induction on $n$.
For this we assume $C_{2,n-1}=2^{n-2}$.
Let $t\in\T_{n-1}$ with left depth $\ld(t) = (d_0,\ldots,d_{n-3},1,0)$.
Giving two children to the leaf labeled $n-1$ or to the leaf labeled $n-2$ of $t\in\T_{n-1}$ gives two different trees in $\T_n$ whose left depths are $(d_0,\ldots,d_{n-3},1,1,0)$ and $(d_0,\ldots,d_{n-3},2,1,0)$.
Hence $C_{2,n}\ge 2 C_{2,n-1} = 2^{n-1}$.
\end{proof}

Define $\defcolor{\T_{k,n}}$ to be the maximal subset of $\T_n$ whose members avoid $\comb_k^1$.
That is, $\T_{k,n}$ is the set of all $k$-minimal binary trees with $n$ internal nodes.
The next result is a consequence of Proposition~\ref{P:number_classes_is_Catalan} and Proposition~\ref{prop:binary1}.
\begin{corollary}\label{cor:Cnk}
The modular Catalan number $\C_{k,n}$ enumerates $\T_{k,n}$.
\end{corollary}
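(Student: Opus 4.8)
The plan is to assemble Corollary~\ref{cor:Cnk} directly from the two cited propositions, treating it as a bookkeeping exercise that identifies three a priori different sets: the $k$-components of $\T_n$, the set of $k$-minimal binary trees in $\T_n$, and the set $\T_{k,n}$ of binary trees with $n$ internal nodes avoiding $\comb_k^1$ as a subtree.

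First I would recall, from the discussion immediately preceding Proposition~\ref{P:number_classes_is_Catalan}, that the $k$-components of $\T_n$ are exactly the $k$-equivalence classes of parenthesizations of $x_0*\dotsb*x_n$, so that Proposition~\ref{P:number_classes_is_Catalan} gives $C_{k,n}=\#\{\text{$k$-components of }\T_n\}$. Next I would invoke the second assertion of Proposition~\ref{prop:binary1}: every $k$-equivalence class of $\T_n$ contains a unique minimal element. This says that the map sending a $k$-component to its minimal element is well defined and injective; it is surjective onto the set of $k$-minimal binary trees in $\T_n$ by definition of $k$-minimality. Hence this map is a bijection, and $C_{k,n}$ equals the number of $k$-minimal binary trees with $n$ internal nodes. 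Finally I would apply the first assertion of Proposition~\ref{prop:binary1} (the "$k$-minimal" half), which characterizes the $k$-minimal binary trees as precisely those avoiding $\comb_k^1$ as a subtree, i.e.\ the members of $\T_{k,n}$; composing the identifications gives $\#\T_{k,n}=C_{k,n}$.

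There is no real obstacle here, since both structural inputs are already established; the only point requiring a word of care is making explicit that the unique minimal representative of a $k$-component of $\T_n$ is itself an element of $\T_n$ (immediate, as $k$-rotations preserve the number of internal nodes) and that "$k$-equivalence class" and "$k$-component" are the same notion, as recorded in the text. I expect the written proof to be two or three sentences long, essentially just chaining the two propositions together.
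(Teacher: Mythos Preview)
Your proposal is correct and matches the paper's approach exactly: the paper states Corollary~\ref{cor:Cnk} without a written proof, simply noting it is ``a consequence of Proposition~\ref{P:number_classes_is_Catalan} and Proposition~\ref{prop:binary1}.'' Your write-up spells out precisely the chain of identifications that this sentence leaves implicit, so there is nothing to add or change.
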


Equations~\eqref{eq:C1} and \eqref{eq:C2} in Section~\ref{sec:formula} give closed formulas  for $C_{k,n}$.
Directly counting trees in $\T_n$ containing $\comb_k^1$ gives special cases of Equation~\eqref{eq:C2}:
we have $C_{k,n} = C_n$ for $n\le k$ and 
\[ C_{k, k+\ell} = C_{k+\ell}-\tbinom{k+2\ell}{\ell-1} \qquad \text{if}\quad k\ge\ell\ge1. \]

For $k\ge0$ we define the \demph{generalized Motzkin number} $\defcolor{M_{k,n}}$ to be the number of binary trees in $\T_n$ avoiding $\comb_{k+1}$.
When $k\ge1$ the number $M_{k,n}$ enumerates $k$-maximal elements of $\T_n$.
Proposition~\ref{prop:binary1} implies $M_{k-1,n}\le C_{k,n}\le M_{k,n}$.
One sees that $M_{0,0}=1$, $M_{0,n}=0$ for $n\ge1$, and $M_{1,n}=1$ for $n\ge0$. 
More generally, Equations~\eqref{eq:M1} and \eqref{eq:M2} of Section~\ref{sec:formula} are closed formulas for $M_{k,n}$.
These formulas could also be derived from work of Tak\'acs~\cite{Takacs} on plane trees with degree constraints.
Directly counting trees in $\T_n$ containing $\comb_{k+1}$ gives specializations of Equation~\eqref{eq:M2}: we have $M_{k,n}=C_n$ for $n\le k$ and 
\[ M_{k, k+\ell} = C_{k+\ell}-\tbinom{k+2\ell-1}{\ell-1} \qquad \text{if}\quad k\ge\ell-1\ge0.\]

The following is a corollary to Proposition~\ref{prop:plane2}.

\begin{corollary}\label{cor:plane}
The modular Catalan number $C_{k,n}$ enumerates plane trees with $n+1$ nodes whose non-root nodes have degree less than $k$.
The generalized Motzkin number $M_{k,n}$ enumerates plane trees with $n+1$ nodes, each having degree no more than $k$.
\end{corollary}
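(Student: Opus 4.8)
The plan is to combine the degree-preserving bijection between binary trees and plane trees from Section~\ref{sec:trees} with the characterizations of $k$-minimal and $k$-maximal trees already established, so the proof reduces to assembling earlier results. For the claim about $C_{k,n}$, I would argue as follows: by Proposition~\ref{P:number_classes_is_Catalan} the $k$-components of $\T_n$ number $C_{k,n}$, and by Proposition~\ref{prop:binary1} each $k$-component contains a unique minimal binary tree, so $C_{k,n}$ equals the number of $k$-minimal binary trees in $\T_n$. Transporting along the bijection (binary trees with $n+1$ leaves $\leftrightarrow$ plane trees with $n+1$ nodes) and invoking Proposition~\ref{prop:plane1}, the $k$-minimal binary trees correspond exactly to the plane trees with $n+1$ nodes whose non-root nodes all have degree $<k$. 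Equivalently, one may phrase this directly via Proposition~\ref{prop:plane2}: the $k$-equivalence classes of plane trees with $n+1$ nodes are in bijection with the $k$-components of $\T_n$, hence number $C_{k,n}$, and each such class has by Proposition~\ref{prop:plane1} a unique minimal representative, namely the unique plane tree in the class all of whose non-root nodes have degree $<k$.

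For the claim about $M_{k,n}$, I would recall that $M_{k,n}$ is defined as the number of binary trees in $\T_n$ avoiding $\comb_{k+1}$ as a subtree, which by Proposition~\ref{prop:binary1} is precisely the number of $k$-maximal binary trees in $\T_n$. Applying the same bijection together with the other half of Proposition~\ref{prop:plane1}, these $k$-maximal binary trees correspond exactly to the plane trees with $n+1$ nodes in which every node, the root included, has degree at most $k$. This gives the second assertion.

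Since every ingredient is already in place, there is no substantial obstacle; the only point requiring care is that, unlike the $C_{k,n}$ case, the $M_{k,n}$ statement should not be phrased in terms of a ``unique maximal representative'' of a $k$-equivalence class, as Proposition~\ref{prop:plane1} only guarantees uniqueness of minimal representatives. Instead one counts $k$-maximal trees directly as the $\comb_{k+1}$-avoiding binary trees, equivalently the degree-$\le k$ plane trees.
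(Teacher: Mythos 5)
Your argument is correct and is essentially the proof the paper intends: the paper derives this corollary by combining Proposition~\ref{P:number_classes_is_Catalan}, Proposition~\ref{prop:plane1} (unique $k$-minimal representative with non-root degrees $<k$), Proposition~\ref{prop:plane2}, and the definition of $M_{k,n}$ as the count of $\comb_{k+1}$-avoiding binary trees transported through the binary-tree/plane-tree bijection, exactly as you do. Your cautionary remark that the $M_{k,n}$ half should be counted directly via $\comb_{k+1}$-avoidance rather than via ``unique maximal representatives'' is a sound and careful reading of what Proposition~\ref{prop:plane1} does and does not assert.
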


We denote by $\defcolor{[T]_k}$ the $k$-equivalence class of a plane tree $T$.
We next study the largest $k$-equivalence classes.
Let $T$ be a plane tree with multi-degree $\md(T) = (\md_0,\ldots,\md_n)$.
Assume $\md_j\ge1$ for some $j\in[n]$.
By Proposition~\ref{prop:degree}, subtracting $1$ from $\md_j$ and adding $1$ back to $\md_0$ still gives a multi-degree of some plane tree, which is denoted by $\defcolor{\phi_j(T)}$.

\begin{lemma}\label{lem:injection}
Suppose $T$ is a $k$-minimal plane tree with $\md(T)=(\md_0,\ldots,\md_n)$.
Assume $\md_j\ne 0$ for some $j\in[n]$. 
Then we have the following.

\noindent (i) 
The tree $\phi_j(T)$ is also $k$-minimal.

\noindent (ii)
Sending $T'$ to $\phi_j(T')$ for all $T'\in[T]_k$ gives an injection $\phi_j: [T]_k \hookrightarrow [\phi_j(T)]_k$.

\noindent(iii)
The above injection $\phi_j$ is a bijection if and only if the multi-degree $(a_0,\ldots,a_n)$ of every tree in $[\phi_j(T)]_k$ satisfies $a_0+\cdots+a_i\ge i+1$ for all $i\in\{0,1,\ldots,j-1\}$.
\end{lemma}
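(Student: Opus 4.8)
The plan is to prove the three parts in order, using Proposition~\ref{prop:degree} and Proposition~\ref{prop:plane2} as the main tools, and exploiting the fact (from Proposition~\ref{prop:plane2}) that $k$-equivalence of plane trees is exactly congruence of multi-degrees modulo $k$.

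For part (i), I would argue as follows. Since $T$ is $k$-minimal, Proposition~\ref{prop:plane1} says every non-root node has degree $<k$; equivalently $\md_i<k$ for all $i\in[n]$. The operation $\phi_j$ changes $\md_j\mapsto \md_j-1$ and $\md_0\mapsto \md_0+1$, leaving all other entries fixed. The entry $\md_j-1$ is still in $\{0,\ldots,k-1\}$, and $\md_0$ is the root degree, which is unconstrained for $k$-minimality. So $\phi_j(T)$ again has every non-root degree $<k$, hence is $k$-minimal by Proposition~\ref{prop:plane1}. (One should also note $\phi_j(T)$ is a genuine plane tree, which is exactly the content invoked in the paragraph before the lemma, via Proposition~\ref{prop:degree}: the inequalities $a_0+\cdots+a_{i-1}\ge i$ are only strengthened for $i\le j$ and unchanged for $i>j$, and the total degree stays $n$.)

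For part (ii), the key observation is that $\phi_j$ commutes with passing to the unique $k$-minimal representative. Given $T'\in[T]_k$, its multi-degree is congruent to $\md(T)$ mod $k$; applying $\phi_j$ shifts the $0$th and $j$th coordinates by $+1$ and $-1$, which is the same shift applied to $\md(T)$, so $\md(\phi_j(T'))\equiv \md(\phi_j(T))\pmod k$ and hence $\phi_j(T')\in[\phi_j(T)]_k$ by Proposition~\ref{prop:plane2}. For well-definedness I must check $\phi_j$ is applicable to every $T'\in[T]_k$, i.e.\ that $\md_j(T')\ge 1$; since $\md_j(T')\equiv \md_j(T)\not\equiv 0\pmod k$ this holds. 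Injectivity is immediate because $\phi_j$ is invertible on multi-degrees (subtract $1$ from coordinate $0$, add $1$ to coordinate $j$) whenever the preimage coordinate vector is a legal multi-degree, and the map $T'\mapsto\md(T')$ is a bijection (Proposition~\ref{prop:degree}).

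For part (iii), surjectivity of $\phi_j:[T]_k\to[\phi_j(T)]_k$ amounts to: every multi-degree $(a_0,\ldots,a_n)$ occurring in $[\phi_j(T)]_k$ is in the image of the inverse operation $\psi_j$ (which does $a_0\mapsto a_0-1$, $a_j\mapsto a_j+1$). By Proposition~\ref{prop:degree}, $(a_0-1,a_1,\ldots,a_j+1,\ldots,a_n)$ is a valid multi-degree iff it is nonnegative with the same sum $n$ and satisfies the partial-sum inequalities $\ge i$ for all $i\in[n]$. Nonnegativity of the $0$th entry, sum, and the inequalities for $i>j$ and for $i$ not affected are automatic; the entry $a_j+1$ can only help. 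The only inequalities that can fail are those for $i\in\{1,\ldots,j\}$: the partial sum $a_0+\cdots+a_{i-1}$ drops by $1$ for $i\in\{1,\ldots,j\}$, so we need $a_0+\cdots+a_{i-1}\ge i+1$, i.e.\ $a_0+\cdots+a_i\ge i+1$ for all $i\in\{0,\ldots,j-1\}$ after reindexing. Conversely, if some tree in $[\phi_j(T)]_k$ violates this, it is not in the image, so $\phi_j$ is not surjective. The main obstacle I anticipate is being careful with the off-by-one bookkeeping between the "$a_0+\cdots+a_{i-1}\ge i$" form of the constraint in Proposition~\ref{prop:degree} and the "$a_0+\cdots+a_i\ge i+1$ for $i\in\{0,\ldots,j-1\}$" form in the statement, and making sure the argument that "only these inequalities can fail" is airtight — in particular that the $0$th coordinate $a_0-1$ stays nonnegative, which follows since $a_0\ge 1$ must hold for any tree in $[\phi_j(T)]_k$ because $\md_0(\phi_j(T))=\md_0(T)+1\ge 1$ and congruence mod $k$ together with... actually more simply because $a_0+\cdots+a_{i-1}\ge i$ at $i=1$ gives $a_0\ge1$.
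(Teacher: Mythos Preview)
Your approach is essentially the paper's own: Proposition~\ref{prop:plane1} for (i), congruence of multi-degrees (Proposition~\ref{prop:plane2}) for well-definedness and the target class in (ii), and Proposition~\ref{prop:degree} for the surjectivity criterion in (iii). Parts (i) and (ii) are fine, and in fact slightly more explicit than the paper.

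There is, however, a genuine slip in your reindexing in (iii). You correctly derive that the candidate preimage $(a_0-1,a_1,\ldots,a_j+1,\ldots,a_n)$ is a valid multi-degree if and only if
\[
a_0+\cdots+a_{i-1}\ge i+1\qquad\text{for all }i\in\{1,\ldots,j\}.
\]
Substituting $i\mapsto i+1$ gives $a_0+\cdots+a_i\ge i+2$ for $i\in\{0,\ldots,j-1\}$, not $\ge i+1$ as you wrote. This matters: the inequality $a_0+\cdots+a_i\ge i+1$ is exactly the defining condition in Proposition~\ref{prop:degree} and is therefore satisfied by \emph{every} multi-degree, so the condition as you (and the paper) state it is vacuous and would force $\phi_j$ to always be a bijection---contradicting the whole point of Theorem~\ref{thm:largest}. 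Your unshifted form $a_0+\cdots+a_{i-1}\ge i+1$ for $i\in[j]$ is the correct criterion; the lemma as printed appears to carry the same off-by-one, and the application in the proof of Theorem~\ref{thm:largest} should likewise be read with the stronger inequality. Your instinct to flag this bookkeeping as the main hazard was exactly right; you just need to carry the shift through correctly.
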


\begin{proof}
Let the multi-degree of $\phi_j(T)$ be $(e_0,\ldots,e_n)$. 
Then $e_i\le \md_i$ for all $i\in[n]$.
This implies that $\phi_j(T)$ is $k$-minimal by Proposition~\ref{prop:plane2}.
If $T'$ is $k$-equivalent to $T$ then its multi-degree $\md(T') = (\md'_0,\ldots,\md'_n)$ satisfies
$\md'_j\ge1$ since $\md'_j\equiv \md_j\mod k$.
Hence $\phi_j(T')$ is well defined and has multi-degree $(e'_0,\ldots,e'_n)$ congruent to $\md(\phi_j(T))=(e_0,\ldots,e_n)$ modulo $k$.
Then we have a well defined map $\phi_j: [T]_k \to [\phi_j(T)]_k$, which is an injection since subtracting one from $e'_0$ and adding one back to $e'_j$ gives the unique preimage of $\phi_j(T')$. 
Combining this with Proposition~\ref{prop:degree} also shows that there exists an inverse of the injection $\phi_j$ if and only if the multi-degree $(a_0,\ldots,a_n)$ of each tree in $[\phi_j(T)]_k$  satisfies $a_0+\cdots+a_i\ge i+1$ for all $i\in\{0,1,\ldots,j-1\}$.
\end{proof}

Using Lemma~\ref{lem:injection} we can find all the largest $k$-equivalence classes in $\T_{n}$.
Let $m$ be the smallest positive integer congruent to $n$ modulo $k$.
A plane tree $T$ with multi-degree $\md(T)=(\md_0,\ldots,\md_n)$ is called \demph{$k$-admissible} if 
\begin{itemize}
\item
$(\md_0-n+m, \md_1,\ldots,\md_m)$ is the multi-degree of some plane tree with $m+1$ nodes and $\md_{m+1}=\cdots=\md_n=0$, or equivalently,
\item
$(\md_0-n+m,\md_1,\ldots,\md_{m-1},\md_m+n-m,0,\ldots,0)$ is the multi-degree of some tree in $[T]_k$.
\end{itemize}
If $T$ is $k$-admissible then $T$ is $k$-minimal since $\md_1,\ldots,\md_m<m\le k$.
For example, the unique plane tree $\defcolor{T(n,0,\ldots,0)}$ with multi-degree $(n,0,\ldots,0)$ is $k$-admissible, and for $k=3$ and $n=6$ the $k$-admissible plane trees with $n+1$ nodes have the following multi-degrees:
\[ (6,0,0,0,0,0,0),\
(5,1,0,0,0,0,0),\
(5,0,1,0,0,0,0),\
(4,2,0,0,0,0,0),\
(4,1,1,0,0,0,0).\]

\begin{theorem}\label{thm:largest}
Fix $n\ge0$ and $k\ge1$. Let $m$ be the smallest positive integer congruent to $n$ modulo $k$.
Then a $k$-equivalence class of plane trees with $n+1$ nodes has the largest size if and only if its minimal representative is $k$-admissible.
\end{theorem}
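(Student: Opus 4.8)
The plan is to count, for each $k$-minimal plane tree $T$ with $n+1$ nodes, the size of its class $[T]_k$ using the injections $\phi_j$ from Lemma~\ref{lem:injection}, and to show that starting from an arbitrary $k$-minimal $T$ and repeatedly applying suitable $\phi_j$'s one can always reach a $k$-admissible tree without ever decreasing the class size, with equality throughout exactly when $T$ was already $k$-admissible. Thus the largest classes are precisely those whose minimal representatives are $k$-admissible.

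First I would set up the reduction. Given a $k$-minimal $T$ with multi-degree $(\md_0,\dots,\md_n)$, if $T$ is not $k$-admissible then, by the definition of $k$-admissibility (in its second, equivalent form), the tree in $[T]_k$ obtained by pushing mass from the right end toward $\md_0$ and $\md_m$ does not satisfy the degree-sequence constraint of Proposition~\ref{prop:degree}; concretely there must be some index $j>m$ with $\md_j\ne 0$, or the "mass past position $m$" cannot be absorbed into $\md_m$ without violating $\md_m<k$. In either case I would locate the largest $j$ with $\md_j\ne 0$ and apply $\phi_j$: by Lemma~\ref{lem:injection}(i) the result $\phi_j(T)$ is again $k$-minimal, and by part (ii) we get $|[T]_k|\le |[\phi_j(T)]_k|$. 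The key point is to check, using part (iii), that this inequality is \emph{strict} whenever $T$ is not $k$-admissible — i.e. that the relevant partial-sum condition $a_0+\cdots+a_i\ge i+1$ fails for some tree in $[\phi_j(T)]_k$ and some $i<j$ — so that non-admissible trees genuinely have smaller classes than their images. Conversely, when $T$ \emph{is} $k$-admissible I must show $\phi_j$ is a bijection for every valid $j$, so admissibility is preserved and the class size is an invariant of this process; this again comes from part (iii), since admissibility forces every tree in the class to have its degree mass concentrated in positions $0,\dots,m$ with large enough prefix sums.

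Then I would show all $k$-admissible trees have classes of the same (maximal) size. The cleanest way is: the reduction above, applied to an arbitrary $k$-minimal tree, terminates (each $\phi_j$ strictly decreases $\sum_i i\,\md_i$ or pushes the support leftward, so it cannot continue forever) at a $k$-admissible tree; hence $\max_T |[T]_k|$ is attained at a $k$-admissible tree, and by the bijectivity of $\phi_j$ on admissible trees all $k$-admissible trees share this common class size. Combined with the strictness in the non-admissible case, this gives both directions of the "if and only if". (The actual common value — the formula $\sum_{0\le j\le n/k}\frac{n-jk}{n}\binom{n+j-1}{j}$ of Proposition~\ref{prop:FormulaLargest}, and the count $C_m$ of admissible trees of Corollary~\ref{cor:largest} — is not needed here, only that admissibility is the exact characterization.)

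The main obstacle I anticipate is the strictness claim: verifying that $\phi_j$ fails to be surjective precisely when $T$ is not $k$-admissible. This requires unpacking Lemma~\ref{lem:injection}(iii) and understanding which multi-degrees actually occur in $[\phi_j(T)]_k$ — i.e. which sequences congruent to $\md(\phi_j(T))$ mod $k$ satisfy the Proposition~\ref{prop:degree} inequalities. The down-slide description \eqref{eq:degree} says that within a class one can move $k$ units of degree from a node at position $j$ to a node at a strictly smaller position $i$; so the "most spread out to the left" representative has its support pushed as far left as the mod-$k$ residues allow, and the prefix-sum condition $a_0+\cdots+a_i\ge i+1$ for $i<j$ is exactly the obstruction to carrying out the further slide needed to realize the preimage of an arbitrary element of $[\phi_j(T)]_k$. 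Making this correspondence precise — translating "$\phi_j$ is onto" into a statement purely about $(\md_0,\dots,\md_n)$ and matching it to the definition of $k$-admissibility — is the technical heart of the argument; the rest is bookkeeping with Propositions~\ref{prop:degree},~\ref{prop:plane1}, and~\ref{prop:plane2}.
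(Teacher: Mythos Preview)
Your overall strategy—chain the injections $\phi_j$ from an arbitrary $k$-minimal $T$ toward $T(n,0,\ldots,0)$, show that admissible $T$ forces each $\phi_j$ to be a bijection, and deduce the converse—is exactly the paper's. The forward direction (admissible $\Rightarrow$ each $\phi_j$ bijective with admissible image, hence $|[T]_k|=|[T(n,0,\ldots,0)]_k|$) is argued the same way in both.

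Where you diverge is the converse. You want to show that for a non-admissible $T$ the map $\phi_j$ (with your chosen largest $j$) is strictly non-surjective, and you correctly flag this as the main obstacle. But this single-step strictness is not what the paper proves, and it is awkward to attack head-on: by Lemma~\ref{lem:injection}(iii), surjectivity of $\phi_j$ is a statement about the prefix sums of \emph{every} element of $[\phi_j(T)]_k$, and when $\phi_j(T)$ is itself non-admissible you have no handle on that class. Your sketch gives no mechanism to rule out the scenario ``non-admissible $T$ maps bijectively to non-admissible $\phi_j(T)$,'' in which strictness would only appear further down the chain; translating ``$\phi_j$ is onto'' into a condition on $(\md_0,\ldots,\md_n)$ and matching it to non-admissibility of $T$ alone, as you propose, does not obviously go through.

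The paper sidesteps this by reversing the induction. It proves: if $\phi_j:[T]_k\to[\phi_j(T)]_k$ is a bijection \emph{and} $\phi_j(T)$ is $k$-admissible, then $T$ is $k$-admissible. Having the admissibility of $\phi_j(T)$ as a hypothesis is the whole point—it furnishes a specific element of $[\phi_j(T)]_k$ (the one with multi-degree $(e_0-n+m,e_1,\ldots,e_{m-1},e_m+n-m,0,\ldots,0)$ from the second form of the definition) whose $\phi_j$-preimage directly witnesses the admissibility of $T$. The converse of the theorem then falls out: if $[T]_k$ has maximal size, every injection in the chain from $T$ to $T(n,0,\ldots,0)$ is between sets of equal cardinality, hence a bijection; since the endpoint $T(n,0,\ldots,0)$ is admissible, admissibility propagates back step by step to $T$. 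So rather than proving strictness \emph{at} non-admissible trees, the paper proves preservation of admissibility \emph{through} bijections, working backward from the known admissible endpoint.
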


\begin{proof}
Lemma~\ref{lem:injection} gives a chain of injections from any $k$-equivalence class of plane trees with $n+1$ nodes to $[T(n,0,\ldots,0)]_k$.
Hence $[T(n,0,\ldots,0)]_k$ has the largest size among all $k$-equivalence classes of plane trees with $n+1$ nodes.

Let $T$ be a $k$-minimal plane tree with multi-degree $\md(T)=(\md_0,\ldots,\md_n)\ne(n,0,\ldots,0)$.
Then $\md_j\ge1$ for some $j\in[n]$.
Let $(e_0,\ldots,e_n)$ be the multi-degree of $\phi_j(T)$.
Suppose the injection $\phi_j:[T]_k\hookrightarrow [\phi_j(T)]_k$ is a bijection and $\phi_j(T)$ is $k$-admissible.
We have 
\[ (e_0-n+m,e_1,\ldots,e_{m-1},e_m+n-m,0,\ldots,0) \equiv (e_0,\ldots,e_n) \mod k\]
where the left hand side is the multi-degree of some tree in $[\phi_j(T)]_k$ by definition of the $k$-admissibility.
This tree has a preimage under the bijection $\phi_j$, and the multi-degree of the preimage must be $(\md_0-n+m,\md_1,\ldots,\md_{m-1},\md_m+n-m,0,\ldots,0)$.
This implies $T$ is $k$-admissible.
Hence if $[T]_k$ has the same size as $[T(n,0,\ldots,0)]_k$ then $T$ must be $k$-admissible.

Now suppose $T$ is indeed $k$-admissible.
Then $\md_j\ge1$ implies $j\le m$. 
Thus $\phi_j(T)$ also $k$-admissible by definition. 
Let $(e'_0,\ldots,e'_n)$ be the multi-degree of any tree in $[\phi_j(T)]_k$. 
For any $i<j$ we have 
\[ e'_0+\cdots+e'_i \equiv \md_0-n+m+\md_1+\cdots+\md_i+1 \mod k.\]
Since $T$ is $k$-admissible, we also have $\md_0-n+m+\md_1+\cdots+\md_i\ge i$.
Combining these with $i<j\le m\le k$ we obtain $e'_0+\cdots+e'_i \ge i+1$. 
Hence $\phi_j:[T]_k\to[\phi_j(T)]_k$ is a bijection by Lemma~\ref{lem:injection}.
This implies that any $k$-admissible plane tree represents a $k$-equivalence class of equal size as $T(n,0,\ldots,0)$.
\end{proof}

\begin{corollary}\label{cor:largest}
Fix $n\ge0$ and $k\ge1$. 
Let $m$ be the smallest positive integer congruent to $n$ modulo $k$.
Among all $k$-equivalence classes of plane trees with $n+1$ nodes, there are $C_m$ many that have the largest size, one of which is represented by $T(n,0,\ldots,0)$.
\end{corollary}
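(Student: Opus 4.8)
The plan is to count the largest classes directly via Theorem~\ref{thm:largest}. Since by Proposition~\ref{prop:plane1} every $k$-equivalence class of plane trees has a unique minimal representative, counting the largest classes of plane trees with $n+1$ nodes is the same as counting those minimal representatives, and by Theorem~\ref{thm:largest} these are exactly the $k$-admissible plane trees with $n+1$ nodes. So it suffices to show that the $k$-admissible plane trees with $n+1$ nodes are in bijection with all plane trees with $m+1$ nodes (a set of size $C_m$), and that $T(n,0,\ldots,0)$ is among them.

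I would build the bijection through the multi-degree encoding of Proposition~\ref{prop:degree}. One direction is essentially the definition of $k$-admissibility: if $T$ is $k$-admissible with $\md(T)=(\md_0,\ldots,\md_n)$, then $\md_{m+1}=\cdots=\md_n=0$ and $(\md_0-n+m,\md_1,\ldots,\md_m)$ is the multi-degree of a unique plane tree with $m+1$ nodes, to which we send $T$. For the reverse map, given a plane tree $S$ with $m+1$ nodes and $\md(S)=(a_0,\ldots,a_m)$, set $(\md_0,\ldots,\md_n):=(a_0+n-m,\,a_1,\ldots,a_m,\,0,\ldots,0)$ and verify that this sequence satisfies the hypotheses of Proposition~\ref{prop:degree}, so it is $\md(T)$ for a unique plane tree $T$ with $n+1$ nodes, which is then $k$-admissible by construction. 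As $\md$ is injective (Proposition~\ref{prop:degree}) and the two assignments are manifestly mutually inverse, they are inverse bijections, giving $C_m$ classes of largest size. Finally $T(n,0,\ldots,0)$, with multi-degree $(n,0,\ldots,0)$, is sent by the first assignment to $(m,0,\ldots,0)$, the multi-degree of the plane tree on $m+1$ nodes whose root has $m$ leaf children; hence it is $k$-admissible, which finishes the proof.

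The only genuine computation is the verification in the reverse direction, and the heavy lifting has already been done in Theorem~\ref{thm:largest}, so I expect this to be routine. Its total is $(a_0+\cdots+a_m)+(n-m)=n$, and for the partial-sum inequalities one splits into the ranges $1\le i\le m$ and $m<i\le n$: in the first range use $a_0+\cdots+a_{i-1}\ge i$ (valid for $S$ by Proposition~\ref{prop:degree}) together with $n-m\ge 0$, and in the second range the partial sum is the full sum $n\ge i$. The only points needing a little care are the bookkeeping around the shift by $n-m$ and the need for $m\le n$ so that the zero-padding is legitimate; this holds whenever $n\ge1$, since then $n$ is itself a positive integer congruent to $n$ modulo $k$, forcing $m\le n$.
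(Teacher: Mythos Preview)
Your argument is correct and is exactly the derivation the paper has in mind: the corollary is stated without proof because the bijection between $k$-admissible trees on $n+1$ nodes and arbitrary plane trees on $m+1$ nodes is essentially the definition of $k$-admissibility, and you have spelled out the routine verification (via Proposition~\ref{prop:degree}) that the inverse assignment $(a_0,\ldots,a_m)\mapsto(a_0+n-m,a_1,\ldots,a_m,0,\ldots,0)$ lands in the set of valid multi-degrees. Your observation that the construction needs $m\le n$, hence $n\ge1$, is a fair caveat; the boundary case $n=0$ is degenerate in the paper's own definition of $k$-admissibility as well.
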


Finally, the size of the largest $k$-equivalence classes of plane trees with $n+1$ nodes will be given in Proposition~\ref{prop:FormulaLargest}.

\section{Connections with other objects}\label{sec:objects}

We explore $M_{k,n}$ and $C_{k,n}$ as they pertain to other Catalan objects.
A \demph{Dyck path of (semi)length $2n$} is a diagonal lattice path from $(0,0)$ to $(2n,0)$ consisting of $n$ up-steps $U=(1,1)$ and $n$ down-steps $D=(1,-1)$ such that none of the path is below the $x$-axis.
Every sequence $\md=(\md_0,\ldots,\md_n)$ of nonnegative integers corresponds to a lattice path
\[ L(\md):=U^{\md_0}DU^{\md_1}\cdots DU^{\md_n} \]
which is a Dyck path if and only if $\md$ is the multi-degree of a plane tree.
This gives a bijection between plane trees with $n+1$ nodes and Dyck paths of length $2n$.

A \demph{partition} is a decreasing sequence of nonnegative integers $\lambda=(\lambda_1,\ldots,\lambda_n)$.
The \demph{size} of $\lambda$ is $|\lambda|:=\lambda_1+\cdots+\lambda_n$ and the \demph{length} of $\lambda$ is $\ell(\lambda):=\#\{i\in[n]:\lambda_i>0\}$.
It is often convenient to represent $\lambda$ by its \demph{Young diagram}, which has $\ell(\lambda)$ many left-justified rows with $\lambda_i$ boxes on the $i$th row for $i=1,2,\ldots,\ell(\lambda)$.
See Figure~\ref{fig:tableau} below.
Say a partition $\lambda$ is \demph{bounded by} another partition $\mu$ and write $\defcolor{\lambda\subseteq \mu}$ if the Young diagram of $\lambda$ is contained in the Young diagram of $\mu$.
The partition $\defcolor{k^n}:=(k,\ldots,k)$ is a sequence of $n$ copies of $k$.

A Dyck path of length $2n$ may also be written as $L=UD^{e_1}UD^{e_2}\cdots UD^{e_n}$. 
It corresponds to a partition $\lambda(L):=(\lambda_1(L),\ldots,\lambda_n(L))$ whose $j$th part $\lambda_j(L):=e_1+\cdots+e_{n-j}$ satisfies $0\le \lambda_j\le n-j$ for all $j\in[n]$.
Thus $L\mapsto \lambda(L)$ gives a bijection between Dyck paths of length $2n$ and partitions of the form $\lambda=(\lambda_1,\ldots,\lambda_n)$ with $0\le\lambda_j\le n-j$ for all $j\in[n]$.
The Young diagram of $\lambda(L)$ is enclosed between Dyck paths $L$ and $U^nD^n$.
Thus $\lambda(L)$ is bounded by $(n-1,n-2,\ldots,1,0)$.

There is also a simple bijection between Dyck paths of length $2n$ and $2\times n$ standard Young tableaux.
For each $i\in[2n]$, if the $i$th step is up (respectively down) in the Dyck path then put $i$ on the top (respectively bottom) row of the corresponding tableau.
See, e.g.,~\cite{Yong} for more information on Young diagrams and Young tableaux.

An example of the correspondence among plane trees with $n+1$ nodes, Dyck paths of length $2n$, partitions with $n$ nonnegative parts bounded by $(n-1,n-2,\ldots,1,0)$, and $2\times n$ standard Young tableaux is given below ($n=4$).
\begin{figure}[h]
\raisebox{12pt}{$\begin{array}{c}\  \raisebox{8pt}{$T=$}\ \includegraphics[scale=.32]{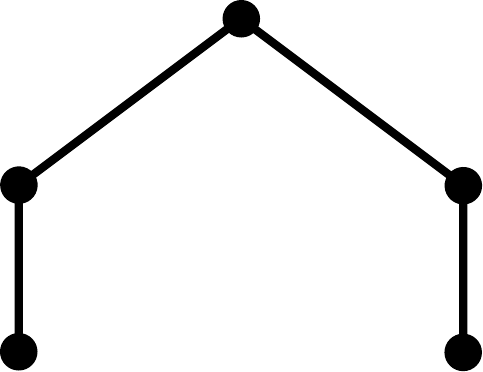} \\ \md(T) = (2,1,0,1,0) \end{array}$ }\qquad
\raisebox{40pt}
 {$ \xymatrix @R=8pt @C=8pt @M=-1pt
  { & & & & \nola \ar@{.}[rrrddd] \ar@{.}[lldd]  \\
   & & & \txt{$\phantom{\bullet}$} \ar@{.}[rd] & & \txt{$\phantom{\bullet}$} \ar@{.}[ld]  \\
   & & \bullet \ar@{-}[ld] \ar@{-}[rd] & & \bullet \ar@{-}[rd]  & & \txt{$\phantom{\bullet}$} \ar@{.}[ld]\\
   & \bullet \ar@{-}[ld] & & \bullet \ar@{-}[ru]  & & \bullet \ar@{-}[rd]   & & \bullet \ar@{-}[rd]   \\
   \bullet & & & & & & \bullet \ar@{-}[ru]  & & \bullet } 
 $}
 \qquad \raisebox{10pt}{$\lambda=(3,1,0,0)$}
 \quad
 \young(1247,3568)
\caption{Correspondence among plane trees, Dyck paths, partitions, and tableaux}\label{fig:tableau}
\end{figure}

Now we discuss pattern avoidance for permutations.
Denote by ${\defcolor{\S_n}}$ the symmetric group consisting of all permutations of $[n]$, and write a permutation $w\in\S_n$ as a word $w(1)\cdots w(n)$.
Let $w\in\S_n$ and $u\in\S_m$ with $m\le n$.
Say $w$ \demph{contains the pattern $u(1)$-$u(2)$-$\cdots$-$u(m)$} if there exists $1\le r_1<\cdots<r_m\le m$ such that $w(r_i)<w(r_j) \Leftrightarrow u(i)<u(j)$ whenever $1\le i<j\le m$.
Moreover, if we omit a dash between $u(j)$ and $u(j+1)$ in the above definition then $w(r_j)$ and $w(r_{j+1})$ are required to be adjacent entries of $w$, i.e., $r_{j+1}=r_j+1$.
Say $w$ \demph{avoids} a pattern if it does not contain that pattern.

Given a word $w=w_1\cdots w_n$ of distinct numbers $w_1,\ldots,w_n$, we construct a binary tree $\tr(w)\in\T_n$ whose internal nodes are labeled by $w_1,\ldots,w_n$.
Suppose $w_i$ is maximal among $w_1,\ldots,w_n$.
We draw the root of $\tr(w)$, labeling it $w_i$, and we recursively construct two binary trees $\tr(w_1\cdots w_{i-1})$ and $\tr(w_{i+1}\cdots w_n)$ and label their internal nodes. 
We then attach these trees to the root of $\tr(w)$ as left and right subtrees.
Restricting the map $\tr$ to $\S_n$ gives a poset surjection onto the Tamari lattice $\T_n$, where $\S_n$ is partially ordered by the \demph{weak order}\,: $u<v$ if the inversions of $u$ are contained in the inversions of $v$..

Conversely, for each $t\in \T_n$,  one obtains a permutation, denoted by $\defcolor{\tr^{-1}(t)}$, by labeling the internal nodes with $n,n-1,\ldots,1$ according to the pre-order and then reading these labels following the in-order.
Here the \demph{in-order} recursively lists first the left subtree of the root, next the root itself, and last the right subtree of the root.
One can check that $\tr(\tr^{-1}(t))=t$ for any $t\in\T_n$.
Hence $\tr:\S_n\twoheadrightarrow \T_n$ is a poset surjection and $\tr^{-1}: \T_n\hookrightarrow \S_n$ is a poset injection.
Moreover, the image of $\tr^{-1}$ is the set of ($1$-$3$-$2$)-avoiding permutations in $\S_n$ (cf. Exercise~\cite[6.19.ff]{EC2}).

\begin{example}\label{example:tr}
The left hand tree in Figure~\ref{fig:tr} is $t=\tr(26513874)$.
One sees that $\tr^{-1}(t)$ is the ($1$-$3$-$2$)-avoiding permutation $67534821$ and $\tr(67534821)=t$ by the right hand picture in Figure~\ref{fig:tr}.
\begin{figure}[h]
\[ \begin{picture}(140,110) 
  \put(10,0){\includegraphics[scale=.32]{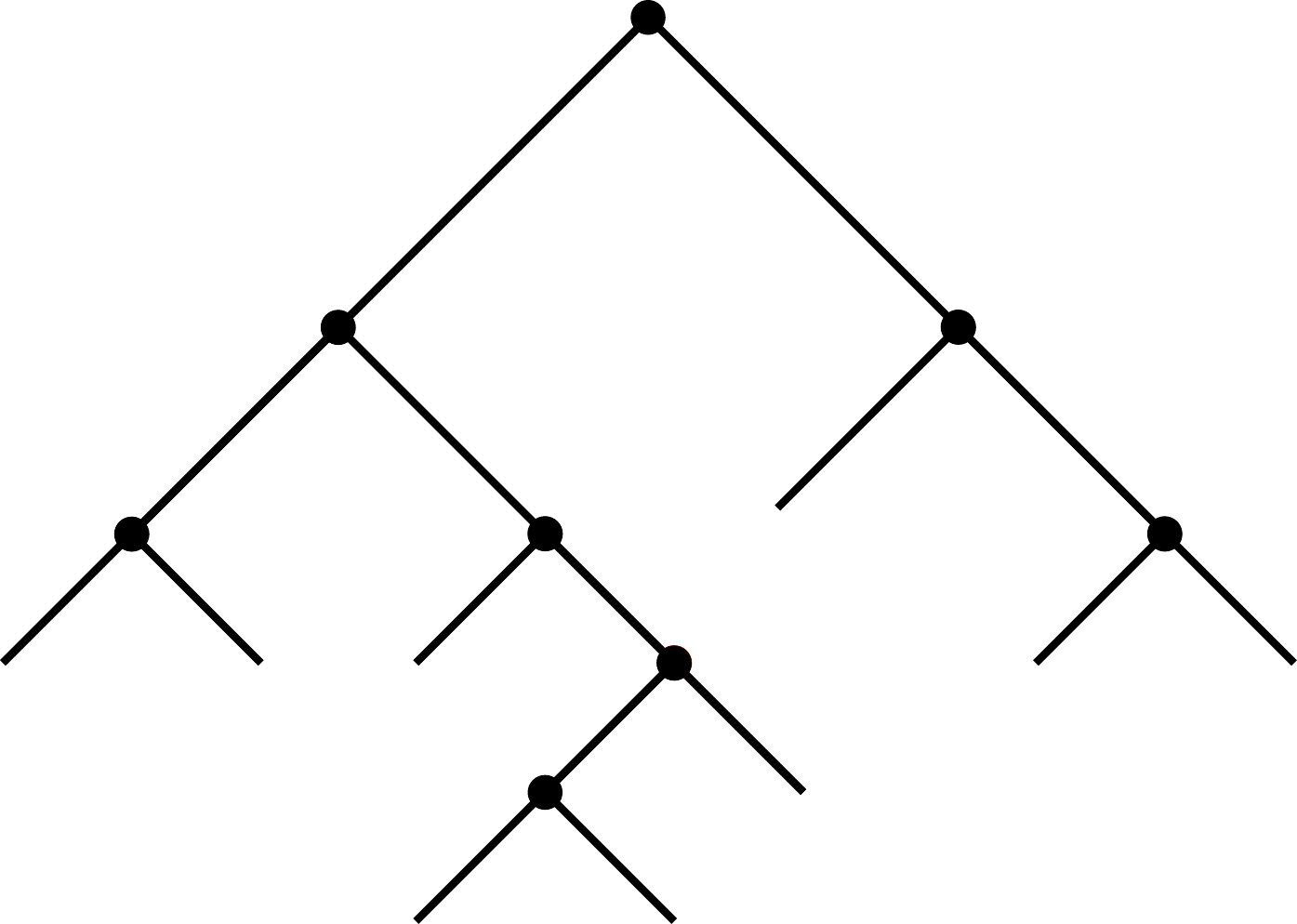}}
  \put(57,15){1}
  \put(15,40){2}
  \put(79,28){3}
  \put(37,63){6}
  \put(66.5,40){5}
  \put(129,40){4}
  \put(108,63){7}
  \put(77,93){8}
 \end{picture}
 \rule{60pt}{0pt}
 \begin{picture}(140,110) 
  \put(10,0){\includegraphics[scale=.32]{binaryLabeled.pdf}}
  \put(57,15){3}
  \put(15,40){6}
  \put(79,28){4}
  \put(37,63){7}
  \put(66.5,40){5}
  \put(129,40){1}
  \put(108,63){2}
  \put(77,93){8}
 \end{picture} \]
\caption{The maps $\tr$ and $\tr^{-1}$}\label{fig:tr}
\end{figure}
\end{example}

The bijections described earlier lead to connections between the relevant Catalan objects and the numbers $M_{k,n}$ and $C_{k,n}$.

\begin{proposition}\label{prop:M counts}
For $n\ge0$ and $k\ge1$, $M_{k-1,n}$ enumerates the following:
\begin{enumerate}
\item binary trees with $n$ internal nodes avoiding $\comb_k$,
\item plane trees with $n+1$ nodes, each having degree less than $k$,
\item Dyck paths of length $2n$ avoiding $U^{k}$ ($k$ consecutive up-steps).
\item partitions bounded by $(n-1,n-2,\ldots,1,0)$ with each part occurring fewer than $k$ times,
\item $2\times n$ standard Young tableaux avoiding $k$ consecutive numbers in the top row, and
\item permutations of $[n]$ avoiding $1$-$3$-$2$ and $12\cdots k$.
\end{enumerate}
\end{proposition}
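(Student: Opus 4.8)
The plan is to obtain (1) straight from the definition of $M_{k-1,n}$ and then to push the property ``avoids $\comb_k$'' through the bijections already constructed in this section, one class of objects at a time. Almost every step is routine transport of structure; the only genuinely delicate point is (6), which I take up last.

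Item (1) is the definition: $M_{k-1,n}$ was declared to be the number of binary trees in $\T_n$ avoiding $\comb_{(k-1)+1}=\comb_k$. Item (2) is Corollary~\ref{cor:plane} with $k$ replaced by $k-1$; equivalently, under the binary-tree--plane-tree bijection, Proposition~\ref{prop:binary1} and Proposition~\ref{prop:plane1} identify ``$t$ avoids $\comb_k$'' with ``every node of the corresponding plane tree has degree $<k$''. For (3) I would use the bijection $T\mapsto L(\md(T))$ between plane trees with $n+1$ nodes and Dyck paths of length $2n$: a node of degree $\md_i$ contributes the block $U^{\md_i}$ in $L(\md)=U^{\md_0}DU^{\md_1}\cdots DU^{\md_n}$, so each maximal run of consecutive up-steps of $L(\md)$ has length equal to some $\md_i$, and hence $L(\md)$ contains the factor $U^k$ if and only if $\md_i\ge k$ for some $i$. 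Thus ``all degrees $<k$'' corresponds to ``avoids $U^k$'', which is (3). Composing with the bijection between Dyck paths of length $2n$ and $2\times n$ standard Young tableaux (the $i$-th step is up exactly when $i$ lies in the top row) turns the factor $U^k$ into $k$ consecutive integers in the top row, which is (5).

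For (4) I would instead compose the bijection of (3) with $L\mapsto\lambda(L)$. Writing $L=UD^{e_1}\cdots UD^{e_n}$, the $i$-th and $(i{+}1)$-st up-steps of $L$ are adjacent exactly when $e_i=0$, so $L$ contains $U^k$ iff $e_i=e_{i+1}=\cdots=e_{i+k-2}=0$ for some $i$; since $\lambda_j(L)=e_1+\cdots+e_{n-j}$ gives $e_m=\lambda_{n-m}(L)-\lambda_{n-m+1}(L)$, this is equivalent to some value occurring at least $k$ times among the $n$ parts $\lambda_1(L),\dots,\lambda_n(L)$ (the parts being weakly decreasing, ``$\ge k$ occurrences'' means the same as ``$\ge k$ consecutive equal parts''). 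Hence $L$ avoids $U^k$ iff every part of $\lambda(L)$ occurs fewer than $k$ times, which gives (4). I would be careful to point out here that ``part'' refers to one of the $n$ entries of the staircase encoding, the trailing entry $0$ included; with a ``positive parts only'' convention the encoding of $(UD)^n$ would be miscounted, and the count for $k$ small would break.

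Finally (6). Via $\tr^{-1}$ --- equivalently, via the bijection between $\T_n$ and the $1$-$3$-$2$-avoiding permutations of $[n]$ --- the claim reduces to showing that $t\in\T_n$ avoids $\comb_k$ if and only if the associated $1$-$3$-$2$-avoiding permutation avoids $12\cdots k$; alternatively one can compose a bijection between $1$-$3$-$2$-avoiding permutations and Dyck paths with item (3). This is the step I expect to be the main obstacle. The event ``$t$ contains $\comb_k$ as a subgraph'' unwinds to ``$t$ has $k$ internal nodes $v_0,\dots,v_{k-1}$ with $v_{i+1}$ the left child of $v_i$'' (the leaf positions of $\comb_k$ can always be supplied), and the work is to match such a left chain with an occurrence of the pattern $12\cdots k$ while controlling how the pre-order labelling and the in-order reading interact with left chains inside $t$ --- for instance, an ascent of $\tr^{-1}(t)$ records an internal node whose right child is a leaf, so blocks of ascents are governed by the tree's left chains. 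Pinning this dictionary down precisely --- which occurrences of the pattern correspond to $\comb_k$-subgraphs and why the resulting count equals $M_{k-1,n}$ --- is where the real effort is needed; by contrast (2)--(5) are essentially bookkeeping on top of bijections already in hand.
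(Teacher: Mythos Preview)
Your treatment of (1)--(5) is correct and matches the paper's approach: the paper simply says these ``follow directly from its definition and the descriptions of the appropriate bijections'', and your explicit transport of structure is exactly what is meant.

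For (6), however, your proposed reduction is wrong as stated. You write that ``the claim reduces to showing that $t\in\T_n$ avoids $\comb_k$ if and only if the associated $1$-$3$-$2$-avoiding permutation avoids $12\cdots k$''. This equivalence is false. Take $k=3$, $n=4$, and let $t$ be the tree with internal nodes $R,A,B,C$ where $R.\text{left}=A$, $R.\text{right}=\text{leaf}$, $A.\text{left}=\text{leaf}$, $A.\text{right}=B$, $B.\text{left}=C$, $B.\text{right}=\text{leaf}$, and $C$ has two leaf children. The longest left chain has length $2$, so $t$ avoids $\comb_3$. But $\tr^{-1}(t)=3124$, which contains the consecutive pattern $123$ at positions $2,3,4$. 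Your own observation that an ascent of $\tr^{-1}(t)$ records an internal node whose right child is a leaf is correct, but ``$k-1$ consecutive such nodes in in-order'' is not the same condition as ``a left chain of length $k$''.

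The paper's key extra idea is to compose with the involution $w\mapsto w^{-1}$ on $1$-$3$-$2$-avoiding permutations. A left chain $v_0,\ldots,v_{k-1}$ in $t$ receives \emph{consecutive labels} in pre-order, and these labels appear at increasing positions in in-order; hence $t$ contains $\comb_k$ iff $w=\tr^{-1}(t)$ has $k$ consecutive \emph{values} at increasing positions, which is precisely the statement that $w^{-1}$ contains the consecutive pattern $12\cdots k$. Since inversion is a bijection on $1$-$3$-$2$-avoiders, this gives the count. In the example above, $w^{-1}=2314$ indeed avoids $123$. Your alternative route (b), composing a Dyck-path bijection with (3), can also be made to work, but you would need to specify which bijection and verify that it carries $U^k$-avoidance to $12\cdots k$-avoidance; not every standard bijection does.
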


\begin{proof}
That $M_{k-1,n}$ enumerates the sets (1)--(5) follows directly from its definition and the descriptions of the appropriate bijections.
To reveal the more obscure result, that $M_{k-1,n}$ counts (6), we apply the involution $w\mapsto w^{-1}$ on permutations avoiding $1$-$3$-$2$.
Let $t$ be a binary tree corresponding to a ($1$-$3$-$2$)-avoiding permutation $w\in\S_n$.
If $t$ contains $\comb_{k}$ then there exist $1\le i_1<\cdots< i_k\le n$ with $w(i_1)<\dotsb<w(i_k)$ consecutive increasing integers, say $w(i_j)=h+j$ for some $h\in [n-k]$ and all $j\in[k]$.
Equivalently, if $t$ contains $\comb_k$ then there exists $h\in [n-k]$ such that
\[ w^{-1}(h+1)<w^{-1}(h+2)<\cdots<w^{-1}(h+k).\]
Hence $t$ avoids $\comb_k$ if and only if $w^{-1}$ avoids $1$-$3$-$2$ and $12\cdots k$.
\end{proof}

Similarly one can prove the following result.

\begin{proposition}\label{prop:C counts}
For $n\ge0$ and $k\ge1$, $C_{k,n}$ enumerates the following:
\begin{enumerate}
 \item the set $\T_{k,n}$ of binary trees with $n$ internal nodes avoiding $\comb_k^1$,
 \item plane trees with $n+1$ nodes whose non-root nodes have degree less than $k$,
 \item Dyck paths of length $2n$ avoiding $DU^k$ (a down-step immediately followed by $k$ up-steps),
 \item partitions bounded by $(n-1,n-2,\ldots,1,0)$ with each positive part occurring fewer than $k$ times,
 \item $2\times n$ standard Young tableaux which contain no list of $k$ consecutive numbers in the top row other than $1,2,\ldots,\ell$ for any $\ell\in[n]$,
 \item permutations of $[n]$ avoiding $1$-$3$-$2$ and $23\cdots (k+1)1$.
\end{enumerate}
\end{proposition}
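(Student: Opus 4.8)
The proof parallels that of Proposition~\ref{prop:M counts}, so the strategy is to dispatch items (1)--(5) by tracing through the bijections already established in Section~\ref{sec:trees} and the start of Section~\ref{sec:objects}, and then handle the permutation statement (6) by the inverse-permutation trick. First I would recall that Corollary~\ref{cor:Cnk} gives item~(1) outright, and Corollary~\ref{cor:plane} (equivalently Proposition~\ref{prop:plane2}) gives item~(2). For item~(3): under the bijection $T\mapsto L(\md(T)) = U^{\md_0}DU^{\md_1}\cdots DU^{\md_n}$ between plane trees with $n+1$ nodes and Dyck paths of length $2n$, the degree $\md_i$ of a non-root node $v_i$ is exactly the length of a maximal run of up-steps that is immediately preceded by a down-step; the root contributes the initial run $U^{\md_0}$, which is \emph{not} preceded by a $D$. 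Hence ``every non-root node has degree $<k$'' translates precisely to ``no occurrence of $DU^k$,'' giving (3). For items (4) and (5): the partition $\lambda(L)$ attached to a Dyck path written as $UD^{e_1}\cdots UD^{e_n}$ has the property that the multiplicity of a given positive value among its parts equals the number of interior up-steps preceded by a down-step at that height — which, once one unwinds the two descriptions of a Dyck path (by the $\md_i$ and by the $e_i$), is again $\md_i$ for the corresponding non-root node; the ``positive part'' caveat excludes the initial run from the root. A clean way to say this: a maximal vertical run of boxes of height $>0$ in the Young diagram of $\lambda(L)$ squeezed between $L$ and $U^nD^n$ corresponds bijectively to a non-root node, with run length equal to that node's degree, so the multiplicity-$<k$ condition is equivalent to (3). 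Item~(5) follows from (3)--(4) via the standard bijection between Dyck paths and $2\times n$ standard Young tableaux: a run $DU^k$ in the path is a descent in the bottom row followed by $k$ consecutive entries in the top row, so avoiding $DU^k$ is exactly forbidding any block of $k$ consecutive integers in the top row except an initial block $1,2,\ldots,\ell$ (which comes from the root's run $U^{\md_0}$, preceded by no $D$).

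For item~(6) I would mimic the proof of Proposition~\ref{prop:M counts}(6) using the involution $w\mapsto w^{-1}$ on $(1\text{-}3\text{-}2)$-avoiding permutations. Let $t = \tr(w)$ for a $(1\text{-}3\text{-}2)$-avoiding $w\in\S_n$, so that $t$ ranges over all of $\T_n$ as $w$ ranges over $(1\text{-}3\text{-}2)$-avoiders. By Proposition~\ref{prop:binary1}, $C_{k,n}$ counts the $t\in\T_n$ avoiding $\comb_k^1 = \comb_0\wedge\comb_k$ as a subtree. The task is to identify, under $\tr$, which combinatorial feature of $w$ corresponds to containing $\comb_k^1$. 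The key observation is that $\comb_k$ sits inside $t$ as the left-comb spine of $k$ internal nodes whose labels, read bottom-up along left edges, are $k$ consecutive increasing integers $h+1 < h+2 < \cdots < h+k$ with $h+1 \ge 2$ — the extra leftmost leaf of $\comb_k^1$ (versus $\comb_k$) forces the bottom node of this comb to itself be a right child of some internal node, i.e. forces $h \ge 1$. In terms of $w$, $t$ contains $\comb_k^1$ exactly when there are positions $i_1 < \cdots < i_k$ with $w(i_1) < \cdots < w(i_k)$ forming consecutive integers $h+1,\ldots,h+k$ and, additionally, some position to the left of $i_1$ carrying a value smaller than $h+1$ but larger than... — more cleanly, $w^{-1}$ then has $w^{-1}(h+1) < w^{-1}(h+2) < \cdots < w^{-1}(h+k)$ together with a position hosting the value $h$ occurring \emph{after} position $w^{-1}(h+1)$, i.e. a pattern $23\cdots(k+1)1$ where the ``$1$'' is the value $h$. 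Thus $t$ avoids $\comb_k^1$ if and only if $w^{-1}$ avoids both $1\text{-}3\text{-}2$ and $23\cdots(k+1)1$, which is statement~(6).

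\medskip

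The main obstacle is item~(6): pinning down exactly how the extra leaf distinguishing $\comb_k^1$ from $\comb_k$ manifests in the permutation picture, and checking that the resulting forbidden configuration for $w^{-1}$ is precisely the (non-dashed, i.e. consecutive-positions) pattern $23\cdots(k+1)1$ and not some nearby variant. I would be careful here about (a) the interplay between $\tr$ (built using maxima and recursion) and the in-order/pre-order labelling that underlies $\tr^{-1}$, (b) whether the ``$1$'' of $23\cdots(k+1)1$ must be in the position immediately following the ascending block or merely somewhere after it — the non-dashed positions in $2\,3\,\cdots(k{+}1)$ demand the ascending values occupy consecutive positions of $w^{-1}$, which is what $\comb_k$ (a \emph{contiguous} left spine, no branching off the left edges) encodes — and (c) the boundary case where the relevant consecutive integers start at $1$, which is exactly what $\comb_k$ (rather than $\comb_k^1$) already allows and must therefore be excluded from the forbidden pattern. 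Items (3)--(5) are routine once the dictionary ``non-root node of degree $d$ $\leftrightarrow$ maximal up-run preceded by a down-step, of length $d$'' is stated, so I would spend most of the write-up making that dictionary precise and then simply noting that (5) is obtained from it through the Dyck-path/SYT bijection already recalled.
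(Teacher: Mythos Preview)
Your treatment of items (1)--(5) is correct and matches what the paper has in mind (the paper simply says ``similarly one can prove'' after Proposition~\ref{prop:M counts}). Your dictionary for (4) is right: the multiplicity of the positive part $j$ in $\lambda(L)$ is exactly $\md_j$.

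For item (6), your overall strategy via $w\mapsto w^{-1}$ is the right one, but you have the geometry of $\comb_k^1=\comb_0\wedge\comb_k$ backwards, and the argument as written does not go through. In $\comb_k^1$ the left-comb $\comb_k$ is the \emph{right} subtree of the root, so when $t$ contains $\comb_k^1$ it is the \emph{top} node of the left-chain (the one labeled $h+k$) that must be a right child of some internal node, not the bottom node (labeled $h+1$). Your derived condition ``$h\ge 1$'' is therefore not the relevant one: already $t=\comb_n$, which avoids $\comb_k^1$, contains left-chains labeled $h+1,\ldots,h+k$ for every $h\in\{0,1,\ldots,n-k\}$, so ``$h\ge 1$'' does not detect $\comb_k^1$. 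Likewise the ``$1$'' in the pattern $23\cdots(k{+}1)1$ is not the value $h$.

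The correct bookkeeping is this. If $v_1,\ldots,v_k$ is the left-chain with labels $m,m{-}1,\ldots,m{-}k{+}1$ (so $h=m-k$) and $v_1$ is a right child, then $m<n$, and the node labeled $m{+}1$ is either the parent $v_0$ of $v_1$ or lies in the left subtree of $v_0$; in either case its in-order position is smaller than that of $v_k$. Hence in $w^{-1}$ the consecutive positions $m{-}k{+}1,\ldots,m,m{+}1$ satisfy
\[
w^{-1}(m{-}k{+}1)<\cdots<w^{-1}(m)\quad\text{and}\quad w^{-1}(m{+}1)<w^{-1}(m{-}k{+}1),
\]
which is precisely an occurrence of $23\cdots(k{+}1)1$. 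Conversely, given such an occurrence at positions $p,\ldots,p{+}k$, the nodes labeled $p,\ldots,p{+}k{-}1$ form a left-chain (as in the proof of Proposition~\ref{prop:M counts}), and comparing pre-order with in-order forces the top node (labeled $p{+}k{-}1$) to be a right child, yielding $\comb_k^1$ in $t$. So the boundary case you should watch is $m=n$ (top of the chain is the root), not $h=0$.
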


Next we describe a well-known surjection from the Tamari lattice $\T_n$ to the \demph{Boolean lattice} $\defcolor{\mathcal B_{n-1}}$ consisting of subsets of $[n-1]$ ordered by containment. 
Given $t\in \T_n$, define $\des(t)$ to be the set of all $i\in[n-1]$ such that the $(i+1)$th leaf of $t$ is a right child.
In other words, if $(\md_0,\ldots,\md_n)$ is the multi-degree of the plane tree corresponding to $t$ then $\des(t):=\{i\in[n-1]: \md_i>0\}$.
This gives a poset surjection $\des: \T_n\twoheadrightarrow \mathcal B_{n-1}$. 
Moreover,  for any permutation $w\in \S_n$ one can check that $\des(\tr(w))$ equals the \demph{descent set} $\{i\in[n-1]:w(i)>w(i+1)\}$ of  $w$.
For more details see, for example, Loday and Ronco~\cite{LodayRonco}.

Now we define a map $\des^{-1}: \mathcal B_{n-1}\to \T_n$ as follows. 
Let $S=\{i_1,\ldots,i_h\} \in\mathcal B_{n-1}$, where $i_1<\cdots<i_h$. 
Then $\des^{-1}(S)$ is the binary tree whose corresponding plane tree has multi-degree $(\md_0,\ldots,\md_n)$ satisfying $\md_0=n-|S|$, $\md_i=1$ if $i\in S$, and $\md_j=0$ if $j\in[n]\setminus S$.

\begin{proposition}\label{prop:des}
The map $\des^{-1}:\mathcal B_{n-1}\to\T_n$ is an order-preserving injection.
In particular, $\mathcal B_{n-1}\cong\des^{-1}(\mathcal B_{n-1})$ is a lattice isomorphism.
Furthermore, $\des^{-1}(\mathcal B_{n-1})=\T_{2,n}$, and for each $S\in \mathcal B_{n-1}$, $\des^{-1}(S)$ is the unique minimal element of the fiber $\{ t\in\T_n: \des(t)=S\}$ under the Tamari order.
\end{proposition}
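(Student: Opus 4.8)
The plan is to establish, in order: that $\des^{-1}$ is well defined and injective; that its image is $\T_{2,n}$; that $\des^{-1}(S)$ is the unique Tamari-minimal element of its fiber; and that $\des^{-1}$ is order-preserving, from which the lattice isomorphism follows formally. For the first two points, fix $S\subseteq[n-1]$ and let $\md^S=(\md^S_0,\ldots,\md^S_n)$ be the sequence with $\md^S_0=n-|S|$, $\md^S_i=1$ for $i\in S$, and $\md^S_i=0$ otherwise; this is the multi-degree used to define $\des^{-1}(S)$. It is the multi-degree of a (unique) plane tree by Proposition~\ref{prop:degree}: its entries are nonnegative with sum $n$, and for each $m\in[n]$ one has $\md^S_0+\cdots+\md^S_{m-1}=(n-|S|)+|S\cap[m-1]|\ge(n-|S|)+(|S|-(n-m))=m$. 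Injectivity is immediate because $\des(\des^{-1}(S))=\{i\in[n-1]:\md^S_i>0\}=S$, so $\des$ left-inverts $\des^{-1}$. For the image: by Proposition~\ref{prop:binary1} (equivalently Corollary~\ref{cor:plane}) a binary tree lies in $\T_{2,n}$ exactly when the associated plane tree has every non-root node of degree $<2$, i.e.\ multi-degree $(\md_0,\ldots,\md_n)$ with each $\md_i\in\{0,1\}$ for $i\in[n]$; since $\md_n=0$ always and $\sum_i\md_i=n$, such a multi-degree is forced to be $\md^S$ for $S=\{i\in[n-1]:\md_i=1\}$. Hence $\des^{-1}(\mathcal B_{n-1})=\T_{2,n}$.

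Next I would prove that $\des^{-1}(S)$ is the minimum of $F_S:=\{t\in\T_n:\des(t)=S\}$ in the Tamari order, by showing $\des^{-1}(S)\le t$ for every $t\in F_S$, via induction on $w(t):=\sum_{i=0}^n i\,\md_i(t)$. A tree $t\in F_S$ has $\md_n(t)=0$ and $\md_i(t)>0\iff i\in S$ for $i\in[n-1]$; if moreover every $\md_i(t)\le1$ then its multi-degree equals $\md^S$, so $t=\des^{-1}(S)$ and the base case holds. Otherwise pick $j\in[n-1]$ with $\md_j(t)\ge2$; its node $v_j$ has a parent $v_i$ with $i<j$ and $\md_i(t)\ge1$. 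An up $1$-slide at $v_j$—which is a left rotation, hence strictly downward in the Tamari order by the results of Section~\ref{sec:trees}—gives $t'<t$ whose multi-degree agrees with that of $t$ except that $\md_j\mapsto\md_j-1\ge1$ and $\md_i\mapsto\md_i+1$. Then $t'\in F_S$: its multi-degree has the same support in $[n-1]$ as that of $t$, because $i\in S\cup\{0\}$ and $n\notin\{i,j\}$; and $w(t')=w(t)-(j-i)<w(t)$, so by induction $\des^{-1}(S)\le t'<t$. As $\des^{-1}(S)\in F_S$ is a lower bound for $F_S$, it is its unique minimum, hence the unique Tamari-minimal element of the fiber.

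Finally, for order-preservation it suffices to show $\des^{-1}(S)\le\des^{-1}(S\cup\{i\})$ for $i\in[n-1]\setminus S$, which I would do by exhibiting a sequence of right rotations between them. The plane tree $\des^{-1}(S)$ is a "broom": the root carries $n-|S|\ge2$ subtrees, each a path of degree-one nodes, and $v_i$ is the leaf at the bottom of one such root-branch $v_0=w_0,w_1,\ldots,w_p=v_i$; performing down $1$-slides successively at $w_0,w_1,\ldots,w_{p-1}$—each time carrying the newly displaced subtree one step further down the branch—transfers one unit of degree from the root to $v_i$, with net effect $\md_0\mapsto\md_0-1$ and $\md_i\mapsto\md_i+1=1$, which is exactly the multi-degree $\md^{S\cup\{i\}}$. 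Each down-slide is a right rotation, hence strictly increasing in the Tamari order, so $\des^{-1}(S)<\des^{-1}(S\cup\{i\})$; composing along $S'\setminus S$ shows $\des^{-1}$ is order-preserving. Since $\des$ is order-preserving, $\des^{-1}(S)\le\des^{-1}(S')$ also forces $S=\des(\des^{-1}(S))\subseteq\des(\des^{-1}(S'))=S'$, so $\des^{-1}$ is an order-embedding; its image, being order-isomorphic to the lattice $\mathcal B_{n-1}$, is itself a lattice, and $\des^{-1}:\mathcal B_{n-1}\to\T_{2,n}$ is a lattice isomorphism. I expect the delicate part to be exactly this last step: pinning down the sequence of down $1$-slides, checking each is legitimate (which is where $n-|S|\ge2$ is needed), and confirming the cumulative effect on the multi-degree, all of which require careful tracking of pre-order positions after each move; the earlier steps should be routine given Propositions~\ref{prop:degree}, \ref{prop:plane2}, and \ref{prop:binary1}.
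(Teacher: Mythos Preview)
Your proof is correct and follows essentially the same strategy as the paper's: injectivity via $\des\circ\des^{-1}=\mathrm{id}$, order-preservation via a chain of $1$-slides between $\des^{-1}(S)$ and $\des^{-1}(S\cup\{i\})$, and the fiber minimum via up $1$-slides that preserve the descent set. Two points are handled differently and are worth noting. First, for $\des^{-1}(\mathcal B_{n-1})=\T_{2,n}$ the paper argues by cardinality, using the earlier result $C_{2,n}=2^{n-1}$, whereas you give a self-contained structural argument: a plane tree corresponds to an element of $\T_{2,n}$ iff every non-root degree lies in $\{0,1\}$, and such multi-degrees are exactly the $\md^S$. Your argument is cleaner and avoids the dependence on Proposition~2.10. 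Second, for the fiber you actually prove the stronger statement that $\des^{-1}(S)$ is a \emph{minimum} (a lower bound for every $t$ in the fiber), via the well-founded induction on $w(t)=\sum_i i\,\md_i(t)$; the paper's proof only shows that $\des^{-1}(S)$ is the unique \emph{minimal} element, which is what the proposition literally asserts. Your explicit description of the down $1$-slides along the branch $w_0,\ldots,w_{p-1}$ is also more detailed than the paper's single sentence ``we may obtain $T$ from $T'$ by a series of up $1$-slides''; it is correct, and your observation that $i\le n-1$ guarantees the branch containing $v_i$ is not rightmost (so the first slide is available) is exactly the check needed.
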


\begin{proof}
Suppose $R\subsetneq S$ is a covering relation in the Boolean lattice $\mathcal B_{n-1}$ with $S\setminus R=\{i\}$.
Let $T$ and $T'$ be plane trees corresponding $\des^{-1}(R)$ and $\des^{-1}(S)$.
Then their multi-degrees satisfies $\md_0(T) = \md_0(T')+1$, $\md_i(T) = 0$, $\md_i(T')=1$, and $\md_j(T)=\md_j(T')$ for all $j\in[n]\setminus\{i\}$.
We may obtain $T$ from $T'$ by a series of up $1$-slides.
Hence $\des^{-1}(R)< \des^{-1}(S)$ in the Tamari order. This shows that $\des^{-1}$ is order-preserving.

Next, let $S\in\mathcal B_{n-1}$.
One can check that $\des(\des^{-1}(S))=S$.
Hence $\des^{-1}$ is injective and we have the isomorphism $\mathcal B_{n-1}\cong\des^{-1}(\mathcal B_{n-1})$ of lattices.

The multi-degree of the plane tree corresponding to $\des^{-1}(S)$ is $2$-minimal by Proposition~\ref{prop:plane1}.
Hence $\des^{-1}(S)\in\T_{2,n}$.
We have $|\T_{k,n}|=C_{2,n}=2^{n-1}=|\mathcal B_{n-1}|$.
Thus $\des^{-1}(\mathcal B_{n-1})=\T_{2,n}$.

Finally, let $t\in\T_n$ whose corresponding plane tree is $T$.
If a non-root node of $T$ has degree at least $2$ then applying an up $1$-slide at this node gives another plane tree $T'$ whose corresponding binary tree $t'$ satisfies $\des(t')=\des(t)$.
Thus any minimal element of the fiber $\{t\in \T_n: \des(t)=S\}$ avoids $\comb_2^1$ and must be the tree $\des^{-1}(S)$.
\end{proof}

\section{Closed formulas}\label{sec:formula}

We derive closed formulas for $C_{k,n}$ and $M_{k,n}$ from generating functions.
Let $\mathcal{PT}_n$ denote the set of plane trees with $n{+}1$ nodes.  Given $T\in\mathcal{PT}_n$ with multi-degree $\md(T)=(\md_0,\ldots,\md_n)$, we define $\bx_T := x_{d_0}\cdots x_{d_n}$.  We define a generating function,
\[ C(\bx,z) := \sum_{n\ge0} \sum_{\mathcal{PT}_n} \bx_T z^{n+1}\,.\]
To study this generating function we need the following Lagrange inversion formula.

\begin{theorem}[{Stanley~\cite[Theorem~5.4.2]{EC2}}]
Suppose that $A(z)$ and $B(z)$ are formal power series in $z$ such that $A(0)=B(0)=0$ and $A(B(z))=z$. If $n$ and $\ell$ are integers then 
\[ n[z^n]B(z)^\ell = \ell[z^{n-\ell}] (z/A(z))^n.\]
\end{theorem}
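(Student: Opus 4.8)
The plan is to give the standard formal-residue proof, since the result is classical (and is applied in exactly this form in Section~\ref{sec:formula}). Work in the field of formal Laurent series over a field of characteristic $0$, and for $f(z)=\sum_j c_j z^j$ write $\operatorname{Res} f := c_{-1}=[z^{-1}]f$. First I would record two elementary facts. (a) $\operatorname{Res} g'(z)=0$ for every Laurent series $g$, because $(z^j)'=jz^{j-1}$ has a $z^{-1}$ term only when $j=0$, and then it is $0$. (b) The hypothesis $A(B(z))=z$ forces $A'(0)\ne0$ and $B'(0)\ne0$ (if $A,B$ had orders $q,p\ge1$ in $z$ then $A(B(z))$ would have order $pq$, so $pq=1$); hence $A$ is a genuine formal change of variable, and one has the substitution rule $\operatorname{Res}_z f(z)=\operatorname{Res}_w\big(f(A(w))\,A'(w)\big)$ for every Laurent series $f$. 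This last rule follows by linearity from the monomials $f(z)=z^j$: for $j\ne-1$ both sides vanish, the right side because $A(w)^jA'(w)=\tfrac1{j+1}\big(A(w)^{j+1}\big)'$ is a derivative; for $j=-1$, writing $A(w)=w\,a(w)$ with $a(0)\ne0$ gives $A'(w)/A(w)=1/w+a'(w)/a(w)$, whose residue is $1$, matching $\operatorname{Res}_w(1/w)=1$.

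Granting (a) and (b), the computation is short. I would write $[z^n]B(z)^\ell=\operatorname{Res}_z\big(z^{-n-1}B(z)^\ell\big)$, then substitute $z=A(w)$ and use $B(A(w))=w$ to obtain $[z^n]B(z)^\ell=\operatorname{Res}_w\big(w^\ell A(w)^{-n-1}A'(w)\big)$. Next I would apply $\operatorname{Res}$ to the identity $\tfrac{d}{dw}\big(w^\ell A(w)^{-n}\big)=\ell\,w^{\ell-1}A(w)^{-n}-n\,w^\ell A(w)^{-n-1}A'(w)$; the left side dies by (a), giving $n\,[z^n]B(z)^\ell=\ell\,\operatorname{Res}_w\big(w^{\ell-1}A(w)^{-n}\big)$. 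Finally $\operatorname{Res}_w\big(w^{\ell-1}A(w)^{-n}\big)=[w^{-\ell}]A(w)^{-n}=[w^{n-\ell}]\big(w/A(w)\big)^n$, so $n[z^n]B(z)^\ell=\ell[z^{n-\ell}]\big(z/A(z)\big)^n$, as claimed. The same manipulations stay valid for negative $n$ or $\ell$ in the Laurent series field, since $B(z)^\ell=z^\ell b(z)^\ell$ with $b(0)\ne0$ and $z/A(z)$ is a unit; the integer version of the identity then follows from the characteristic-$0$ version by a universality argument.

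I expect the only genuine obstacle to be the substitution rule (b), in particular the $j=-1$ case and the side remark that the stated hypotheses silently entail the nondegeneracy $A'(0),B'(0)\ne0$ that makes $A$ invertible; the rest is bookkeeping around ``the residue of a derivative vanishes.'' An alternative is the combinatorial proof via labelled plane trees, but the residue argument is shorter and dovetails with the generating-function computations that follow.
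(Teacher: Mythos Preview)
Your argument is correct and is the standard formal-residue proof of Lagrange inversion. There is nothing to compare against in the paper itself: the theorem is simply quoted from Stanley~\cite[Theorem~5.4.2]{EC2} without proof and then applied. Your residue computation is essentially Stanley's first proof; as the paper remarks after Corollary~\ref{cor:Krew}, Stanley's second proof goes the combinatorial route via plane trees, proving Corollary~\ref{cor:Krew} directly and deducing Lagrange inversion from it---precisely the alternative you mention at the end.
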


\begin{proposition}\label{prop:C^l}
For $n\ge1$ and $\ell\ge0$ we have
\begin{eqnarray}
[z^{n}] C(\bx,z)^\ell &=& \frac\ell{n} \left[z^{n-\ell}\right] \left( x_0+x_1z+x_2z^2+\cdots \right)^{n} \label{eq:C^l1} \\
&=& \frac{\ell}{n}\sum_{\substack{ m_0+m_1+m_2+\cdots = n\\ m_1+2m_2+\cdots = n-\ell} } {n\choose m_0,m_1,m_2,\ldots } x_0^{m_0} x_1^{m_1} x_2^{m_2} \cdots. \label{eq:C^l2}
\end{eqnarray}
\end{proposition}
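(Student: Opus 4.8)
The plan is to obtain a functional equation for $C(\bx,z)$ from the recursive structure of plane trees, extract coefficients by the Lagrange inversion theorem just quoted, and then deduce \eqref{eq:C^l2} from \eqref{eq:C^l1} by a multinomial expansion.

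First I would prove that $C(\bx,z) = z\sum_{r\ge0}x_r\,C(\bx,z)^r$. This comes from decomposing a nonempty plane tree $T$ according to the degree $r=\md_0(T)$ of its root: the root subtrees $T_1,\dots,T_r$, listed left to right, are themselves plane trees, and since multi-degrees are recorded in pre-order the multi-degree of $T$ is $r$ followed by the multi-degrees of $T_1,\dots,T_r$ in order, so $\bx_T = x_r\,\bx_{T_1}\cdots\bx_{T_r}$ while the node count is $|T| = 1 + \sum_i |T_i|$. Summing $\bx_T z^{|T|}$ over all plane trees with root degree $r$ therefore gives $z\,x_r\,C(\bx,z)^r$ (the $r=0$ term contributing the single-node tree, of weight $x_0 z$), and summing over $r\ge0$ yields the claimed identity.

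Next, writing $G(w) := x_0 + x_1 w + x_2 w^2 + \cdots = \sum_{r\ge0}x_r w^r$, the functional equation becomes $C = z\,G(C)$, equivalently $C/G(C) = z$. I would apply Stanley's Lagrange inversion theorem with $B(z) := C(\bx,z)$ and $A(w) := w/G(w)$: here $B(0) = C(\bx,0) = 0$; since $G$ has nonzero constant term $x_0$ it is invertible as a power series, so $A(w) = w\,G(w)^{-1}$ is a well-defined power series with $A(0) = 0$; and $A(B(z)) = C/G(C) = z$. The theorem then gives
\[ n\,[z^n]\,C(\bx,z)^\ell = \ell\,[z^{n-\ell}]\bigl(z/A(z)\bigr)^n = \ell\,[z^{n-\ell}]\,G(z)^n = \ell\,[z^{n-\ell}]\,(x_0+x_1z+x_2z^2+\cdots)^n, \]
which is \eqref{eq:C^l1} after dividing by $n$ (legitimate since $n\ge1$). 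One point worth a sentence: to legitimize the inversion and the inverse $G(w)^{-1}$ I would work over the field $\QQ(x_0,x_1,\dots)$ — or, equivalently, verify the resulting polynomial identity in the $x_i$ after specializing to complex values with $x_0\ne0$; both sides of \eqref{eq:C^l1} are manifestly polynomials in the $x_i$, so nothing is lost.

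Finally, \eqref{eq:C^l2} follows by expanding $(x_0+x_1z+x_2z^2+\cdots)^n$ with the multinomial theorem: the term $x_0^{m_0}x_1^{m_1}x_2^{m_2}\cdots$ with $m_0+m_1+m_2+\cdots = n$ appears with coefficient $\binom{n}{m_0,m_1,m_2,\dots}$ and carries $z^{m_1+2m_2+\cdots}$, so extracting $[z^{n-\ell}]$ forces $m_1+2m_2+\cdots = n-\ell$. The only genuinely delicate step is checking the hypotheses of the Lagrange inversion theorem — picking the correct pair $A,B$, observing $z/A(z) = G(z)$, and dealing with the formal-variable ring issue — while the plane-tree decomposition and the multinomial bookkeeping are routine.
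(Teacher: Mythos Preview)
Your proof is correct and follows essentially the same route as the paper: derive the functional equation $C(\bx,z)=z\sum_{r\ge0}x_rC(\bx,z)^r$ from the root decomposition of plane trees, then apply Lagrange inversion with $A(w)=w/G(w)$ and $B(z)=C(\bx,z)$, noting that $z/A(z)=G(z)$. Your additional remarks on checking the hypotheses of Lagrange inversion and working over $\QQ(x_0,x_1,\dots)$ are careful touches the paper omits, and your explicit multinomial expansion for \eqref{eq:C^l2} is exactly what is intended.
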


\begin{proof}
If $T$ is a plane tree whose root has degree $\ell$ then the multi-degree of $T$ contains $\ell$, followed by the multi-degrees of the $\ell$ subtrees of the root.
Hence
\begin{equation}\label{eq:C(x,z)}
C(\bx,z) = \sum_{\ell\ge0} z x_\ell C(\bx,z)^\ell.
\end{equation}
Applying Lagrange inversion to $A(z) := z/(x_0+x_1z+x_2z^2+\cdots)$ and $B(z) := C(\bx,z)$ gives the result.
\end{proof}

\begin{corollary}[{\cite[Theorem 5.3.10]{EC2}}]\label{cor:Krew}
Given nonnegative integers $\ell, m_0,m_1,m_2,\ldots$ with $m_0+m_1+\cdots =n\ge1$, the number of plane trees with a root of degree $\ell$ and $m_i$ non-root nodes of degree $i$ for $i=0,1,2,\ldots$ is 
\[ [z^nx_0^{m_0}x_1^{m_1}\cdots] C(\bx,z)^\ell = 
\begin{cases}
\displaystyle\frac{\ell}{n}{n\choose m_0,m_1,m_2,\ldots }, & \text{if } m_1+2m_2+\cdots=n-\ell, \\
0, & \text{otherwise}.
\end{cases}\] 
\end{corollary}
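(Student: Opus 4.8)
The plan is to give a combinatorial reading of $C(\bx,z)^\ell$ and then extract the stated formula from equation~\eqref{eq:C^l2}. Recall that for a plane tree $T$ with multi-degree $(\md_0,\ldots,\md_n)$ one has $\bx_T = x_{\md_0}\cdots x_{\md_n}$, which records the degrees of the $n+1$ nodes of $T$; thus $C(\bx,z) = \sum_T \bx_T z^{|V(T)|}$, the sum over all plane trees $T$, where $|V(T)|$ denotes the number of nodes of $T$. Raising this to the $\ell$th power gives
\[ C(\bx,z)^\ell \;=\; \sum_{(T_1,\dots,T_\ell)} \bx_{T_1}\cdots\bx_{T_\ell}\; z^{|V(T_1)|+\cdots+|V(T_\ell)|}, \]
so that $[z^n x_0^{m_0}x_1^{m_1}\cdots]\,C(\bx,z)^\ell$ counts ordered $\ell$-tuples $(T_1,\dots,T_\ell)$ of plane trees with $|V(T_1)|+\cdots+|V(T_\ell)| = n$ and, for every $i\ge0$, exactly $m_i$ nodes of degree $i$ among $T_1,\dots,T_\ell$ in total.

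Next I would invoke the standard root-deletion bijection: a plane tree $S$ whose root has degree $\ell$ is the same data as its root together with the ordered list $(T_1,\dots,T_\ell)$ of the subtrees of the root. This is a bijection from the set of plane trees with $n+1$ nodes and root degree $\ell$ onto the set of ordered $\ell$-tuples of plane trees with $n$ nodes in total, and under it the non-root nodes of $S$ correspond to the nodes of $T_1,\dots,T_\ell$ with degrees preserved. Together with the previous paragraph, this identifies the number of plane trees with a root of degree $\ell$ and exactly $m_i$ non-root nodes of degree $i$ with $[z^n x_0^{m_0}x_1^{m_1}\cdots]\,C(\bx,z)^\ell$, which is the first equality of the corollary.

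Finally, I would read off the coefficient of $x_0^{m_0}x_1^{m_1}\cdots$ from the right-hand side of \eqref{eq:C^l2}. Since the sum there is taken over $m_0+m_1+\cdots = n$ together with $m_1+2m_2+\cdots = n-\ell$, this coefficient equals $\frac{\ell}{n}\binom{n}{m_0,m_1,m_2,\dots}$ when $m_1+2m_2+\cdots = n-\ell$ and vanishes otherwise, giving the displayed piecewise formula. I expect no real obstacle: the corollary is essentially a repackaging of Proposition~\ref{prop:C^l}, and the one place to be careful is the bookkeeping in the root-deletion bijection, namely that non-root node degrees are unchanged and the total node count drops by exactly one, both of which are immediate from the definitions. (The vanishing for $m_1+2m_2+\cdots\ne n-\ell$ can also be seen directly, since in a plane tree with $n+1$ nodes the degrees sum to the number of edges, which is $n$, so once the root has degree $\ell$ the non-root degrees $\sum_i i\,m_i$ must equal $n-\ell$.)
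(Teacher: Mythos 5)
Your argument is correct and matches the paper's (implicit) proof: the paper treats the corollary as an immediate consequence of Proposition~\ref{prop:C^l}, with exactly the root-deletion reading of $C(\bx,z)^\ell$ that you spell out (the same decomposition already underlies \eqref{eq:C(x,z)}). Reading the coefficient of $x_0^{m_0}x_1^{m_1}\cdots$ off \eqref{eq:C^l2}, together with your degree-sum observation for the vanishing case, completes it just as intended.
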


\begin{remark}
We use the Lagrange inversion formula to prove Proposition~\ref{prop:C^l}, which immediately implies Corollary~\ref{cor:Krew}.
Stanley directly proved Corollary~\ref{cor:Krew} \cite[Theorem 5.3.10]{EC2} and used it as one way to prove the Lagrange inversion formula \cite[the second proof of Theorem~5.4.2]{EC2}. 

Taking $\ell=1$ in Corollary~\ref{cor:Krew} recovers a well-known result:
if $m_0+m_1+\cdots = n\ge1$ and $m_1+2m_2+\cdots=n-1$ then plane trees with $m_i$ nodes of degree $i$'s, or equivalently, Dyck paths with $m_i$ occurrences $U^iD$, for all $i=0,1,2,\ldots$, are enumerated by the \demph{Kreweras number}~\cite{Athanasiadis, Kreweras, ReinerSommers}
\[ \mathrm{Krew}(0^{m_0}1^{m_1}2^{m_2}\cdots) := \frac1{n}{n\choose m_0,m_1,m_2,\ldots}. \]
\end{remark}

\begin{proposition}\label{prop:C_I}
Let $\ell,n\ge 1$ and let $I$ be a set of nonnegative integers. 
Then the number of plane trees whose multi-degree $(\md_0,\ldots,\md_n)$ satisfies $\md_0=\ell$ and $\md_1,\ldots,\md_n\in I$ is
\[ \frac\ell{n} \left[z^{n-\ell}\right] \left( \sum_{i\in I} z^i \right)^{n} 
= \frac{\ell}{n}\sum_{\substack{ \sum_{i\in I}m_i = n \\ \sum_{i\in I} im_i = n-\ell} } {n\choose m_i: i\in I }.\]
\end{proposition}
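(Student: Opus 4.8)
The plan is to deduce this from Corollary~\ref{cor:Krew} by a substitution of variables. First I would specialize the generating function $C(\bx,z)$ by setting $x_i=1$ for each $i\in I$ and $x_i=0$ for each $i\notin I$. Since the coefficient of $z^{n}$ in $C(\bx,z)^\ell$ is a polynomial in the finitely many variables $x_0,\ldots,x_n$, this specialization is legitimate at the level of formal power series in $z$. Under it, a monomial $x_0^{m_0}x_1^{m_1}\cdots$ evaluates to $1$ if $m_i=0$ for all $i\notin I$ and to $0$ otherwise, so by Corollary~\ref{cor:Krew} the coefficient $[z^n]C(\bx,z)^\ell$ becomes exactly the number of plane trees with $n{+}1$ nodes whose root has degree $\ell$ and whose non-root nodes all have degree in $I$ --- which is the quantity to be counted.

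On the other side, Equation~\eqref{eq:C^l1} gives $[z^n]C(\bx,z)^\ell=\frac{\ell}{n}[z^{n-\ell}](x_0+x_1z+x_2z^2+\cdots)^n$, and applying the same specialization turns $x_0+x_1z+\cdots$ into $\sum_{i\in I}z^i$, yielding $\frac{\ell}{n}[z^{n-\ell}]\bigl(\sum_{i\in I}z^i\bigr)^n$. Finally, expanding this power by the multinomial theorem and extracting the coefficient of $z^{n-\ell}$ produces the sum $\frac{\ell}{n}\sum{n\choose m_i: i\in I}$ over tuples $(m_i)_{i\in I}$ with $\sum_{i\in I}m_i=n$ and $\sum_{i\in I}im_i=n-\ell$, which is the second displayed expression. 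This matches the left side coming from Corollary~\ref{cor:Krew}, so the two formulas agree and both equal the desired count.

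I do not anticipate a genuine obstacle: the statement is essentially Corollary~\ref{cor:Krew} restricted to a prescribed set of allowed non-root degrees, packaged via the substitution above, so the only points requiring a line of care are (i) noting that the specialization $x_i\mapsto 1$ ($i\in I$), $x_i\mapsto 0$ ($i\notin I$) is valid because each $z$-coefficient involves only finitely many $x_j$, and (ii) checking the bookkeeping that the tuples surviving in the multinomial expansion are exactly those supported on $I$ with the correct total $n$ and weighted total $n-\ell$. Both are routine. Alternatively, one could bypass the generating-function language and argue directly: sum the formula of Corollary~\ref{cor:Krew} over all degree sequences $(m_i)_{i\in I}$ with $\sum_{i\in I}m_i=n$ and $\sum_{i\in I}im_i=n-\ell$, then recognize the resulting sum of multinomial coefficients as $\frac{\ell}{n}[z^{n-\ell}]\bigl(\sum_{i\in I}z^i\bigr)^n$ by the multinomial theorem.
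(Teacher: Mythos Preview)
Your proposal is correct and is essentially the paper's own argument: the paper's proof is the one-line ``taking $x_i=1$ for all $i\in I$ and $x_j=0$ for all $j\notin I$ in \eqref{eq:C^l1} and \eqref{eq:C^l2} gives the result.'' You route the combinatorial interpretation through Corollary~\ref{cor:Krew} rather than directly reading it off $[z^n]C(\bx,z)^\ell$, but since that corollary is itself an immediate consequence of Proposition~\ref{prop:C^l}, the two are the same proof.
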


\begin{proof}
Taking $x_i=1$ for all $i\in I$ and $x_j=0$ for all $j\notin I$ in \eqref{eq:C^l1} and \eqref{eq:C^l2} gives the result.
\end{proof}

Now we study $M_{k,n}$ and $C_{k,n}$, as well as their generating functions
\[ M_k(z):=\sum_{n\ge0} M_{k,n} z^{n+1} \quad\text{and}\quad
C_k(z):=\sum_{n\ge0} C_{k,n} z^{n+1}. \]
It follows from work of Rowland~\cite[Theorem 1]{Rowland} on binary trees that the generating functions $M_k(z)$ and
$C_k(z)$ are algebraic.
In the same work, Rowland used \textsf{Mathematica} to compute explicit polynomial equations satisfied by $M_k(z)$ and $C_k(z)$ for $k\le 4$.
We generalize this to all $k\ge1$ using the specialization $\defcolor{C(1^{k+1},z)}$ of $C(\bx,z)$ at $x_0=\cdots=x_k=1$ and $x_{k+1}=x_{k+2}=\cdots=0$.

\begin{proposition}\label{prop:gen}
For $k\ge0$ we have
\begin{equation}\label{eq:Mk}
M_k(z) = z + zM_k(z) + zM_k(z)^2 + \cdots + zM_k(z)^k.
\end{equation}
For $k\ge1$ we have 
\begin{equation}\label{eq:MC}
C_k(z) = z + zM_{k-1}(z) + zM_{k-1}(z)^2 + \cdots = z/(1-M_{k-1}(z)) \quad\text{and} 
\end{equation}
\begin{equation}\label{eq:Ck}
(C_k(z)-z)^k-C_k(z)^k+C_k(z)^{k-1}-zC_k(z)^{k-2}=0.
\end{equation}
\end{proposition}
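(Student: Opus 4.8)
The plan is to handle the three identities in order. The first two, \eqref{eq:Mk} and \eqref{eq:MC}, come straight from the enumerative descriptions of $M_{k,n}$ and $C_{k,n}$ as counts of degree-restricted plane trees (Corollary~\ref{cor:plane}) together with the standard root-decomposition of a plane tree, and the third, \eqref{eq:Ck}, then follows by eliminating $M_{k-1}(z)$ algebraically between \eqref{eq:Mk} and \eqref{eq:MC}.

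To prove \eqref{eq:Mk}: by Corollary~\ref{cor:plane}, $M_{k,n}$ counts plane trees with $n+1$ nodes all of whose nodes have degree at most $k$, so $M_k(z)$ is exactly the specialization $C(1^{k+1},z)$ of $C(\bx,z)$. Such a tree with root of degree $\ell$ (necessarily $0\le\ell\le k$) is a root together with an ordered list of $\ell$ subtrees, each again a plane tree with all degrees $\le k$; since a tree with $m$ nodes carries weight $z^{m}$ and the root contributes one node, this decomposition gives $M_k(z)=\sum_{\ell=0}^{k}zM_k(z)^\ell$, which is \eqref{eq:Mk}. (Equivalently, substitute $x_0=\cdots=x_k=1$ and $x_{k+1}=x_{k+2}=\cdots=0$ into the functional equation \eqref{eq:C(x,z)}.)

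For \eqref{eq:MC}, Corollary~\ref{cor:plane} says $C_{k,n}$ counts plane trees with $n+1$ nodes whose non-root nodes all have degree $<k$. I would decompose such a tree by the degree $\ell$ of its root, which is now unrestricted: each of the $\ell$ subtrees of the root is rooted at a non-root node of the ambient tree and has all of its nodes among the non-root nodes of the ambient tree, so it is a plane tree every node of which has degree $\le k-1$, i.e.\ a tree counted by $M_{k-1}$. This gives $C_k(z)=\sum_{\ell\ge0}zM_{k-1}(z)^\ell$. Since $M_{k-1}(z)$ has zero constant term, $1-M_{k-1}(z)$ is a unit in $\ZZ[[z]]$ and the geometric series sums to $C_k(z)=z/(1-M_{k-1}(z))$, giving \eqref{eq:MC}.

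Finally, to deduce \eqref{eq:Ck}, abbreviate $M:=M_{k-1}(z)$ and $C:=C_k(z)$. Summing the finite geometric series in \eqref{eq:Mk} with $k-1$ in place of $k$ gives $M(1-M)=z(1-M^{k})$, while \eqref{eq:MC} gives $1-M=z/C$, hence $M=(C-z)/C$. Substituting these into $M(1-M)=z(1-M^{k})$, multiplying through by $C^{k}$, and cancelling the common factor $z$ (not a zero divisor) turns the identity into $C^{k-2}(C-z)=C^{k}-(C-z)^{k}$, that is, $(C-z)^{k}-C^{k}+C^{k-1}-zC^{k-2}=0$, which is \eqref{eq:Ck}; for $k\ge2$ this is an identity in $\ZZ[[z]]$, and for $k=1$ it holds in $\ZZ((z))$, equivalently becoming $C(1-z)-z=0$ after multiplying by $C$. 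The computation is routine; the only points needing care are the bookkeeping of the $z$-exponent (the node count) in the two tree decompositions and the observation that $1-M_{k-1}(z)$ is a unit in $\ZZ[[z]]$, so that the geometric series in \eqref{eq:MC} converges formally and the cancellations are legitimate — hence there is no real obstacle here.
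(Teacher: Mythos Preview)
Your proof is correct and follows essentially the same route as the paper: both derive \eqref{eq:Mk} by specializing the functional equation \eqref{eq:C(x,z)} (equivalently, by root-decomposition of degree-bounded plane trees), obtain \eqref{eq:MC} by decomposing at the root, and then get \eqref{eq:Ck} by substituting $M_{k-1}(z)=(C_k(z)-z)/C_k(z)$ back into \eqref{eq:Mk}. Your version simply fills in the algebraic details and notes the $k=1$ subtlety that the paper leaves implicit.
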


\begin{proof}
Since $M_k(z) = C(1^{k+1},z)$, we deduce \eqref{eq:Mk} from \eqref{eq:C(x,z)}.
Considering the subtrees of the root of a plane tree we have \eqref{eq:MC}, which implies $M_{k-1}(z) = (C_k(z)-z)/(C_k(z))$.
Substituting this into \eqref{eq:Mk} gives \eqref{eq:Ck}.
\end{proof}

Let $\lambda=(\lambda_1,\ldots,\lambda_n)$ be a partition with $m_i$ parts equal to $i$ for $i=0,1,2,\ldots$.
Then 
\begin{itemize}
\item
$|\lambda|=n$ if and only if $m_1+2m_2+\cdots+km_k=n$, and 
\item
$\lambda\subseteq k^n$ if and only if $m_0+\cdots+m_k=n$ and $m_{k+1}=m_{k+2}=\cdots=0$.
\end{itemize}
The \demph{monomial symmetric function} $m_\lambda(x_1,\ldots,x_n)$ is the sum of $x_1^{e_1}\cdots x_n^{e_n}$ for all rearrangement $(e_1,\ldots,e_n)$ of $\lambda$. 
Taking $x_1=\cdots=x_n=1$ in $m_\lambda$ gives the multinomial coefficient 
\[ m_\lambda(1^n) = {n\choose m_0,m_1,m_2,\ldots}. \]

\begin{corollary}\label{cor:FormulaPositive}
For $k,n\ge0$, we have
\begin{equation}\label{eq:M1}
M_{k,n} 
= \frac{1}{n+1} \sum_{\substack{ \lambda\subseteq k^{n+1}\\ |\lambda|=n }} m_\lambda(1^{n+1})\,.
\end{equation}
For $k,n\ge1$, we have
\begin{equation}\label{eq:C1}
C_{k,n} 
= \sum_{\substack{\lambda\subseteq(k-1)^n \\ |\lambda|<n }} \frac{n-|\lambda|}{n} m_\lambda(1^n)\,.
\end{equation}
\end{corollary}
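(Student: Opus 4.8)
The plan is to extract the coefficients $M_{k,n} = [z^{n+1}]M_k(z)$ and $C_{k,n} = [z^{n+1}]C_k(z)$ directly from Proposition~\ref{prop:C_I} by specializing the index set $I$ appropriately, and then rewrite the resulting sums over compositions as sums over partitions inside a rectangle. First I would recall that $M_k(z) = C(1^{k+1},z)$, so by definition $M_{k,n}$ counts plane trees with $n+1$ nodes all of whose degrees lie in $I = \{0,1,\ldots,k\}$. To access $M_{k,n}$ via Proposition~\ref{prop:C_I}, I would condition on the root degree $\ell$: summing the count of plane trees with $n+1$ nodes, root degree $\ell$, and all degrees in $\{0,1,\ldots,k\}$ over $\ell = 0,1,\ldots,k$ gives $M_{k,n}$. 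Proposition~\ref{prop:C_I} (applied with the correct indexing shift, since it counts trees with $n+1$ nodes labeled $0,\ldots,n$) then gives $M_{k,n}$ as $\sum_{\ell}\frac{\ell}{n+1}\sum {n+1 \choose m_i : i \in I}$ with the constraints $\sum_i m_i = n+1$ and $\sum_i i m_i = n+1-\ell$; note the root contributes one part equal to $\ell$, so the multiset $\{0^{m_0}1^{m_1}\cdots k^{m_k}\}$ together with an extra copy of $\ell$ is precisely the multi-degree of the tree. The cleaner route, which I would actually take, is to fold the root into the general node count: a plane tree with $n+1$ nodes all of degree $\le k$ has a multi-degree $(\md_0,\ldots,\md_n)$ which is a composition of $n$ into $n+1$ parts each $\le k$, so by Proposition~\ref{prop:degree} these are exactly the rearrangements of partitions $\lambda \subseteq k^{n+1}$ with $|\lambda| = n$ that additionally satisfy the ballot-type prefix condition. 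The Lagrange-inversion factor $\frac{1}{n+1}$ in Corollary~\ref{cor:Krew} (with $\ell = 1$, i.e. the Kreweras number) accounts for exactly this ballot condition, so the cyclic-lemma style argument collapses the prefix constraint and yields $M_{k,n} = \frac{1}{n+1}\sum_{\lambda\subseteq k^{n+1},\,|\lambda|=n} m_\lambda(1^{n+1})$ after identifying $m_\lambda(1^{n+1})$ with the multinomial coefficient ${n+1 \choose m_0,m_1,\ldots}$ as stated just before the corollary.

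Next, for $C_{k,n}$ I would use \eqref{eq:MC}, namely $C_k(z) = z/(1 - M_{k-1}(z))$, equivalently $C_k(z) = z + z M_{k-1}(z) + z M_{k-1}(z)^2 + \cdots$. Combinatorially, this says a $k$-minimal plane tree is a root of some degree $\ell \ge 0$ attached to $\ell$ subtrees, each of which has all degrees $\le k-1$ (the root-degree restriction only constrains non-root nodes). So $C_{k,n} = [z^{n+1}]\sum_{\ell \ge 0} z\, M_{k-1}(z)^\ell = \sum_{\ell \ge 0}[z^n]M_{k-1}(z)^\ell$. Now I would apply \eqref{eq:C^l1}–\eqref{eq:C^l2} of Proposition~\ref{prop:C^l} with $\bx$ specialized to $x_0 = \cdots = x_{k-1} = 1$, $x_k = x_{k+1} = \cdots = 0$ (so that $C(\bx,z)^\ell$ becomes $M_{k-1}(z)^\ell$), obtaining $[z^n]M_{k-1}(z)^\ell = \frac{\ell}{n}\sum {n \choose m_0,\ldots,m_{k-1}}$ over compositions with $\sum m_i = n$, $\sum i m_i = n-\ell$. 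Summing over $\ell \ge 1$ (the $\ell = 0$ term vanishes for $n \ge 1$) and swapping the order of summation, the variable $\ell$ is determined by $\ell = n - \sum i m_i$, so $\sum_\ell \ell [\cdots] = \frac{1}{n}\sum_{(m_0,\ldots,m_{k-1})}(n - \sum i m_i){n \choose m_0,\ldots,m_{k-1}}$ over all compositions of $n$ into $n$ parts (note: $n$ slots, since there is no separately-indexed root now — I would double-check the index bookkeeping here, see below) with each part-size $i < k$. Recognizing $\sum i m_i = |\lambda|$ when $(m_0,\ldots,m_{k-1})$ encodes the partition $\lambda$ with $m_i$ parts equal to $i$, and that "composition into $n$ parts each $< k$, weight $n$" is the same data as "$\lambda \subseteq (k-1)^n$ with $|\lambda| < n$" (the strict inequality because $\ell = n - |\lambda| \ge 1$), together with $m_\lambda(1^n) = {n \choose m_0,m_1,\ldots}$, I get exactly \eqref{eq:C1}: $C_{k,n} = \sum_{\lambda \subseteq (k-1)^n,\ |\lambda| < n}\frac{n - |\lambda|}{n}m_\lambda(1^n)$.

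The main obstacle I anticipate is purely bookkeeping: getting the indexing shifts exactly right between "$n+1$ nodes of a plane tree" versus "$n$ parts of a partition" versus "the $z^{n+1}$ coefficient of the generating function," and making sure the root node is accounted for exactly once in each of the two formulas (the root is a genuine degree-$\le k$ node for $M_{k,n}$, hence the rectangle is $k^{n+1}$ and the prefactor is $\frac{1}{n+1}$; but for $C_{k,n}$ the root is split off separately via \eqref{eq:MC}, hence the rectangle is $(k-1)^n$ over the remaining $n$ nodes and the prefactor is $\frac{n-|\lambda|}{n}$). I would be especially careful that in the $M_{k,n}$ derivation the partition $\lambda$ of size $n$ records all $n+1$ node degrees (which sum to $n$ since there are $n$ edges), so it genuinely lies in an $(n+1)$-part, height-$k$ rectangle, and that $m_\lambda(1^{n+1})$ rather than $m_\lambda(1^n)$ is the right evaluation. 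Once the translation between the composition-indexed sums of Propositions~\ref{prop:C^l} and~\ref{prop:C_I} and the partition-indexed sums is pinned down — using the dictionary stated immediately before the corollary ($|\lambda| = n \iff \sum i m_i = n$ with $i \le k$; $\lambda \subseteq k^n \iff \sum_{i=0}^k m_i = n$) — both formulas follow mechanically, with no further combinatorial input needed beyond the already-established Lagrange inversion consequences.
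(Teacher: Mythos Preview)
Your proposal is correct and follows essentially the same route as the paper: for $M_{k,n}$ you specialize the Kreweras/Lagrange-inversion count (Corollary~\ref{cor:Krew} with $\ell=1$, equivalently Proposition~\ref{prop:C_I} with $I=\{0,\ldots,k\}$ and $\ell=1$) and then translate the multinomial sum into the partition language; for $C_{k,n}$ you expand $C_k(z)=\sum_{\ell\ge0} zM_{k-1}(z)^\ell$ via \eqref{eq:MC} and apply Proposition~\ref{prop:C^l} termwise, exactly as the paper does. The only differences are expository (you unpack what the paper compresses into one sentence, and your phrase ``compositions of $n$ into $n$ parts'' should really read ``nonnegative-integer tuples $(m_0,\ldots,m_{k-1})$ with $\sum m_i = n$''), but the mathematical content and the indexing are right.
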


\begin{proof}
If the root of a plane tree has degree $\ell$, then deleting the root gives $\ell$ plane trees.
Hence, taking $I=\{0,1,\ldots,k\}$ and $\ell=1$ in Proposition~\ref{prop:C_I} gives a formula for $M_{k,n-1}$ which is equivalent to \eqref{eq:M1}.
Combining \eqref{eq:M1} with \eqref{eq:MC} we have the formula \eqref{eq:C1} for $C_{k,n}$.
\end{proof}

\begin{theorem}\label{thm:FormulaAlt}
For $k\ge1$ and $n\ge0$, we have
\begin{equation}\label{eq:M2}
M_{k-1,n} = \frac{1}{n+1} \sum_{0\le j\le n/k } (-1)^j {n+1\choose j} {2n-jk \choose n}\,. 
\end{equation}
For $k,n\ge1$, we have
\begin{equation}\label{eq:C2}
C_{k,n} = \sum_{0\le j\le (n-1)/k} \frac{(-1)^j}{n} {n\choose j} {2n-jk\choose n+1}\,. 
\end{equation}
\end{theorem}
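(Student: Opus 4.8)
The plan is to read off both formulas from the generating-function identities of Proposition~\ref{prop:gen} combined with the Lagrange-inversion consequence \eqref{eq:C^l1} of Proposition~\ref{prop:C^l}, the only real inputs being the factorization $1+z+\cdots+z^{k-1}=(1-z^k)/(1-z)$ and two binomial expansions.

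First I would establish \eqref{eq:M2}. Since $M_{k-1}(z)=C(1^{k},z)$, specializing \eqref{eq:C^l1} at $x_0=\cdots=x_{k-1}=1$ and $x_k=x_{k+1}=\cdots=0$, taking $\ell=1$, and replacing $n$ by $n+1$ yields
\[ M_{k-1,n}=[z^{n+1}]M_{k-1}(z)=\frac{1}{n+1}\,[z^{n}]\bigl(1+z+\cdots+z^{k-1}\bigr)^{n+1}=\frac{1}{n+1}\,[z^{n}]\,\frac{(1-z^{k})^{n+1}}{(1-z)^{n+1}}. \]
Expanding $(1-z^{k})^{n+1}=\sum_{j}(-1)^{j}\binom{n+1}{j}z^{jk}$ and $(1-z)^{-(n+1)}=\sum_{i}\binom{n+i}{n}z^{i}$ and extracting the coefficient of $z^{n}$ forces $i=n-jk$, so only indices with $jk\le n$ contribute; collecting terms gives \eqref{eq:M2}.

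For \eqref{eq:C2} I would start from \eqref{eq:MC}, which gives $C_k(z)=z/(1-M_{k-1}(z))=z\sum_{\ell\ge0}M_{k-1}(z)^{\ell}$. Since $M_{k-1}(0)=0$, the power series $M_{k-1}(z)^{\ell}$ is divisible by $z^{\ell}$, so only finitely many $\ell$ affect any fixed coefficient, and for $n\ge1$
\[ C_{k,n}=[z^{n+1}]C_k(z)=\sum_{\ell\ge1}[z^{n}]M_{k-1}(z)^{\ell}=\frac{1}{n}\sum_{\ell\ge1}\ell\,[z^{n-\ell}]\bigl(1+z+\cdots+z^{k-1}\bigr)^{n}, \]
the last equality again from \eqref{eq:C^l1}. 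Using the identity $\sum_{\ell\ge1}\ell\,[z^{n-\ell}]P(z)=[z^{n}]\bigl(\tfrac{z}{(1-z)^{2}}\,P(z)\bigr)$ (which holds because $z/(1-z)^{2}=\sum_{\ell\ge1}\ell z^{\ell}$) with $P(z)=(1-z^{k})^{n}(1-z)^{-n}$ collapses this to
\[ C_{k,n}=\frac{1}{n}\,[z^{n}]\,\frac{z\,(1-z^{k})^{n}}{(1-z)^{n+2}}=\frac{1}{n}\,[z^{n-1}]\,\frac{(1-z^{k})^{n}}{(1-z)^{n+2}}, \]
and expanding $(1-z^{k})^{n}=\sum_{j}(-1)^{j}\binom{n}{j}z^{jk}$ and $(1-z)^{-(n+2)}=\sum_{i}\binom{n+1+i}{n+1}z^{i}$ and taking the coefficient of $z^{n-1}$ forces $i=n-1-jk$, giving \eqref{eq:C2}.

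The steps that need a little care are purely bookkeeping: justifying the interchange of $\sum_{\ell}$ with coefficient extraction (immediate from $M_{k-1}(z)^{\ell}=O(z^{\ell})$), and checking that the monomials tacitly discarded from $(1-z^{k})^{n+1}$ and $(1-z^{k})^{n}$ really vanish, which is exactly what pins down the summation ranges $0\le j\le n/k$ and $0\le j\le (n-1)/k$ stated in the theorem. I do not anticipate a genuine obstacle, since the substance is already packaged in Propositions~\ref{prop:gen} and~\ref{prop:C^l}; what remains is a short computation. One could instead obtain \eqref{eq:M2} by applying Lagrange inversion directly to $M_{k-1}(z)\bigl(1-M_{k-1}(z)\bigr)=z\bigl(1-M_{k-1}(z)^{k}\bigr)$ coming from \eqref{eq:Mk}, but routing through \eqref{eq:C^l1} reuses work already done.
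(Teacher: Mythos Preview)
Your proposal is correct and follows essentially the same route as the paper: both derive \eqref{eq:M2} by specializing \eqref{eq:C^l1} at $\ell=1$ and expanding $(1-z^k)^{n+1}/(1-z)^{n+1}$, and both derive \eqref{eq:C2} by combining \eqref{eq:MC} with \eqref{eq:C^l1} and summing over $\ell$. The one cosmetic difference is in how the sum over $\ell$ is collapsed for \eqref{eq:C2}: the paper expands first and then evaluates $\sum_{\ell}\ell\binom{2n-\ell-1-jk}{n-1}$ via the Vandermonde-type identity \eqref{eq:second}, whereas you multiply by $z/(1-z)^2=\sum_{\ell\ge1}\ell z^{\ell}$ before expanding, which packages the same identity as a coefficient extraction and spares you from stating \eqref{eq:second} separately.
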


\begin{proof}
By \eqref{eq:C^l1} we have 
\begin{equation}\label{eq:M^l}
[z^n] C(1^k,z)^\ell = \frac{\ell}n [z^{n-\ell}] \frac{(1-z^k)^n}{(1-z)^n}\
=\frac\ell n \sum_{0\le j\le (n-\ell)/k } (-1)^j {n\choose j} {2n-\ell-1-jk \choose n-1}.
\end{equation}
Taking $\ell=1$ gives a formula for $M_{k-1,n-1}$ which is equivalent to \eqref{eq:M2}. 
By \eqref{eq:MC} and \eqref{eq:M^l},
\begin{eqnarray*}
C_{k,n} &=& \sum_{1\le \ell\le n} \frac\ell n\sum_{0\le j\le (n-\ell)/k} (-1)^j {n\choose j} {2n-\ell-1-jk \choose n-1} \\
&=& \sum_{0\le j\le (n-1)/k} \frac{(-1)^j}{n} {n\choose j} \sum_{1\le \ell\le n-jk} \ell {2n-\ell-1-jk\choose n-1}  \\
&=& \sum_{0\le j\le (n-1)/k} \frac{(-1)^j}{n} {n\choose j} {2n-jk\choose n+1}.
\end{eqnarray*} 
The last step above follows from the formula (taking $a=2n-jk$, $b=n+1$, and $r=1$)
\begin{equation}\label{eq:second}
 {a\choose b} = \sum_{r\le \ell\le a-b+r} {\ell\choose r} {a-\ell-1\choose b-r-1}
\end{equation}
which can be proved by choosing a subset of $b$ elements from the set $[a]$ with the number $\ell+1$ being the $(r+1)$th smallest chosen element.
\end{proof}

\begin{remark}
It seems difficult in general to solve for $C_k(z)$ directly from Equation \eqref{eq:Ck}.
One can apply Lagrange inversion to it and obtain a closed formula of $C_{k,n}$ for $k\ge 3$ and $n\ge1$. 
However, the result is more complicated than our previous formulas \eqref{eq:C1} and \eqref{eq:C2}.
\end{remark}

Next, we derive from the proof of Theorem~\ref{thm:FormulaAlt} a closed formula for the total number $\defcolor{D_{k,n}}$ of intersection points between all Dyck paths of length $2n$ avoiding $DU^k$ and the $x$-axis.

\begin{proposition}\label{prop:intersection}
For $k\ge1$ and $n\ge1$ we have
\[ D_{k,n} = \sum_{0\le j\le (n-1)/k} \frac{(-1)^j \cdot 2}{n} {n\choose j} {2n-jk+1\choose n+2}.\]
\end{proposition}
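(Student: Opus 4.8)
The plan is to adapt the generating-function computation from the proof of Theorem~\ref{thm:FormulaAlt}, since $D_{k,n}$ is naturally expressed in terms of the quantities $[z^n]C(1^k,z)^\ell$ that already appeared there. First I would set up the combinatorial identity: a Dyck path of length $2n$ that avoids $DU^k$ corresponds (via Corollary~\ref{cor:plane} and the plane-tree/Dyck-path bijection) to a plane tree with $n+1$ nodes whose non-root nodes have degree less than $k$, and its number of intersection points with the $x$-axis is one more than the number of ``returns'' to the axis, equivalently one more than the number of indecomposable Dyck factors. So $D_{k,n} = \sum_L (r(L)+1)$ where $r(L)$ counts the factorization of $L$ into irreducible pieces. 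Writing $Q(z)$ for the generating function of irreducible $DU^k$-avoiding Dyck paths and $C_k(z)$ as before, one has $C_k(z) = z/(1-Q(z)/z)$ — more precisely, each irreducible factor is $U$(a $DU^k$-avoiding path)$D$, so the returns are governed by a geometric series, and $\sum_L r(L) x^{\text{length}}$ is extracted by differentiating $1/(1-(\text{factor g.f.}))$ with respect to a marking variable.

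The cleanest route is probably to introduce a bivariate generating function $F(z,u) = \sum_L u^{r(L)} z^{n+1}$ where the sum is over $DU^k$-avoiding Dyck paths; then $F(z,u) = z/(1 - u\cdot M_{k-1}(z))$ by the same decomposition that gave \eqref{eq:MC}, since a nonempty such path factors as an ordered sequence of irreducible pieces, each irreducible piece being $U$ followed by a $(\le k-1)$-ary plane forest worth of structure followed by $D$, and the forest generating function is exactly $M_{k-1}(z)/z$ shifted appropriately — I would need to check the bookkeeping of the extra node, but it mirrors the proof of Proposition~\ref{prop:gen}. Then $D_{k,n} = [z^{n+1}]\big(F(z,u) + \partial_u F(z,u)\big)\big|_{u=1}$. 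Computing $\partial_u F|_{u=1} = z M_{k-1}(z)/(1-M_{k-1}(z))^2 = C_k(z)^2/z \cdot M_{k-1}(z) = (C_k(z)-z)C_k(z)/z$ using \eqref{eq:MC}, and $F(z,u)|_{u=1} = C_k(z)$, one gets $\sum_n D_{k,n} z^{n+1} = C_k(z) + (C_k(z)^2 - zC_k(z))/z = C_k(z)^2/z$. So the key reduction is $D_{k,n} = [z^{n+2}] C_k(z)^2$, i.e. $D_{k,n} = C_{k,n}^{(2)}$ where $C_k(z)^2 = \sum [z^{n}]C_k(z)^2\, z^{n}$; in terms of the earlier notation this is $[z^{n+2}]C_k(z)^2$.

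From there the computation is parallel to the end of the proof of Theorem~\ref{thm:FormulaAlt}. Using \eqref{eq:MC} we have $C_k(z)^2 = z^2/(1-M_{k-1}(z))^2$, and expanding the square of the geometric series gives $C_k(z)^2 = z^2\sum_{\ell\ge1}\ell\, M_{k-1}(z)^{\ell-1}$, hence $[z^{n+2}]C_k(z)^2 = \sum_{\ell\ge1}\ell\,[z^n]M_{k-1}(z)^{\ell-1}$. Now $M_{k-1}(z)^{\ell-1} = (C(1^k,z)/z)^{\ell-1}\cdot z^{?}$... actually $M_{k-1}(z) = C(1^k,z)$ and \eqref{eq:M^l} already evaluates $[z^n]C(1^k,z)^\ell$ explicitly; substituting, swapping the order of summation over $\ell$ and $j$, and performing the inner sum $\sum_\ell \ell(\ell\text{ or }\ell-1)\binom{2n-\ell-1-jk}{n-1}$ via two applications of the hockey-stick-type identity \eqref{eq:second} (or a minor variant with $\binom{\ell}{2}$ and $r=2$) should collapse everything to $\frac{(-1)^j}{?}\binom{n}{j}\binom{2n-jk+1}{n+2}$ with the stated factor of $2$ appearing from $\sum \ell^2$ versus $\sum\ell$ bookkeeping. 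The main obstacle I anticipate is getting the combinatorial setup exactly right — pinning down precisely what ``intersection point'' means (the $r(L)+1$ versus $r(L)$ count, and whether the empty path of length $0$ contributes) and verifying the bivariate decomposition $F(z,u)=z/(1-uM_{k-1}(z))$ handles the root/extra-node bookkeeping correctly; once that is nailed down, the algebra is a routine rerun of the Theorem~\ref{thm:FormulaAlt} computation with $C_k(z)$ replaced by $C_k(z)^2/z$ and with one extra factor of $\ell$ floating through the binomial identities.
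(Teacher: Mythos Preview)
Your proposal is correct and lands on essentially the same computation as the paper: once you identify the number of intersection points of a $DU^k$-avoiding Dyck path with one plus the degree $\ell$ of the root of the corresponding plane tree (equivalently, $D_k(z)=z\sum_{\ell\ge0}(\ell+1)M_{k-1}(z)^\ell=z/(1-M_{k-1}(z))^2$), the inner sum becomes $\sum_\ell (\ell+1)\ell\binom{2n-\ell-1-jk}{n-1}$, and a single application of \eqref{eq:second} with $r=2$ (not two applications---note $(\ell+1)\ell=2\binom{\ell+1}{2}$, which is where the factor of $2$ comes from) collapses it to $2\binom{2n-jk+1}{n+2}$. The paper's proof does exactly this, just stating the weighted sum directly rather than deriving it through your bivariate $F(z,u)$ and the identity $D_k(z)=C_k(z)^2/z$; that generating-function identity is in fact noted in the paper's remark immediately following the proposition.
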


\begin{proof}
Since the number of intersection points between a Dyck path and the $x$-axis is one plus the degree of the root of the corresponding plane tree, it follows from the proof of Theorem~\ref{thm:FormulaAlt} that 
\[ D_{k,n} = \sum_{0\le j\le (n-1)/k} \frac{(-1)^j}{n} {n\choose j} \sum_{1\le \ell\le n-jk} (\ell+1)\ell {2n-\ell-1-jk\choose n-1}.\]
Applying \eqref{eq:second} with $a=2n-jk+1$, $b=n+2$, and $r=2$ gives the result.
\end{proof}

\begin{remark}
(i) Similarly to \eqref{eq:MC}, the generating function $\defcolor{D_k(z)} := \sum_{n\ge0} D_{k,n} z^{n+1}$ satisfies
\[ D_k(z) = \sum_{\ell\ge0} (\ell+1) zM_{k-1}(z)^\ell = z / (1-M_{k-1}(z))^2. \]
Since $C(z):=\sum_{n\ge0} C_n z^{n+1}$ satisfies $C(z) = z + C(z)^2$, taking $k\to\infty$ in the above equation gives
\[ \lim_{k\to\infty} D_k(z) = z/(1-C(z))^2 = z/(z/C(z))^2 = C(z)^2/z = C(z)/z - 1 \]
which recovers a well-known fact that the total number of intersection points between Dyck paths of length $2n$ and the $x$-axis is the Catalan number $C_{n+1}$.

(ii)
For $k=2$ and $k=3$, Proposition~\ref{prop:intersection} gives a new interpretation for the sequences~\cite[A045623, A036908]{OEIS}.
In particular, for $k=2$ and $n\ge1$, the formula in Proposition~\ref{prop:intersection} can be simplified to $(n+3)2^{n-2}$, which enumerates various other interesting objects.
Also, since $M_1(z) = z/(1-z)$ and $M_2(z) = (1-z - \sqrt{1-2z-3z^2})/2z$, we have  
\[ D_2(z) = \frac{z(1-z)^2}{(1-2z)^2} \quad\text{and}\quad
D_3(z) = \frac{4z^3}{(3z-1+\sqrt{1-2z-3z^2})^2}.\]
We do not find any result related to Proposition~\ref{prop:intersection} for $k\ge4$ in the literature.
\end{remark}

Finally, we provide a formula for the largest size of $k$-equivalence classes.

\begin{proposition}\label{prop:FormulaLargest}
The largest size of a $k$-equivalence class of plane trees with $n+1$ nodes is 
\[ \sum_{0\le j\le n/k} \frac{n-jk}{n}{n+j-1 \choose j}. \]
\end{proposition}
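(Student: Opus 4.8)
The plan is to compute the largest class size by counting, via generating functions, the number of plane trees lying in the orbit $[T(n,0,\ldots,0)]_k$, which Theorem~\ref{thm:largest} and Corollary~\ref{cor:largest} identify as a class of maximal size. By Proposition~\ref{prop:plane2}, a plane tree $T'$ is $k$-equivalent to $T(n,0,\ldots,0)$ if and only if its multi-degree $(\md_0,\ldots,\md_n)$ is congruent to $(n,0,\ldots,0)$ modulo $k$, i.e., $\md_1,\ldots,\md_n$ are all divisible by $k$ (and then $\md_0 = n - \md_1 - \cdots - \md_n$ is forced, and automatically $\md_0 \equiv n \bmod k$). So I want to count plane trees with $n+1$ nodes whose non-root nodes all have degree divisible by $k$; equivalently, by the bijection to Dyck paths, Dyck paths $L(\md)$ of length $2n$ with $\md_1,\ldots,\md_n \in \{0, k, 2k, \ldots\}$.

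\textbf{Main computation.} I would apply Proposition~\ref{prop:C_I} (or equivalently \eqref{eq:C^l1}) with $I = \{0, k, 2k, \ldots\}$ and sum over the possible root degrees $\ell$, which must themselves be of the form $\ell = n - \md_1 - \cdots - \md_n$ and in particular multiples of... actually $\ell$ need only satisfy $\ell \equiv n \bmod k$, so I sum Proposition~\ref{prop:C_I} over all such $\ell \in [n]$ together with the $\ell$ contributing $\md_0 = \ell$. Concretely, the number of plane trees with root degree $\ell$ and all other degrees in $I = k\ZZ_{\ge0}$ is
\[ \frac{\ell}{n}[z^{n-\ell}]\Bigl(\sum_{i\ge 0} z^{ik}\Bigr)^n = \frac{\ell}{n}[z^{n-\ell}]\frac{1}{(1-z^k)^n} = \frac{\ell}{n}\sum_{i\ge 0}\binom{n+i-1}{i}[z^{n-\ell}]z^{ik}, \]
which is nonzero only when $n - \ell = ik$ for some $i \ge 0$, giving $\frac{\ell}{n}\binom{n+i-1}{i}$ with $\ell = n - ik$. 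Summing over all valid $i$, i.e. $0 \le i \le n/k$ (note $\ell \ge 1$ unless $n = 0$; the edge case $\ell = 0$ forces $n = 0$ and is handled separately or gives the empty contribution), yields exactly
\[ \sum_{0 \le j \le n/k} \frac{n - jk}{n}\binom{n + j - 1}{j}, \]
which is the claimed formula.

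\textbf{Obstacle and loose ends.} The genuinely substantive input is already in hand: the identification (Theorem~\ref{thm:largest}, Corollary~\ref{cor:largest}) that $[T(n,0,\ldots,0)]_k$ realizes the maximum size, so the remaining work is the routine generating-function extraction above. The one point requiring a little care is the translation step — making sure that "multi-degree congruent to $(n,0,\ldots,0)$ mod $k$" is correctly packaged as "non-root degrees in $k\ZZ_{\ge 0}$" and that Proposition~\ref{prop:C_I} applies with the infinite set $I$, since the relevant power series $\sum_{i\ge0} z^{ik} = 1/(1-z^k)$ is a well-defined formal power series and only finitely many terms contribute to each coefficient. I would also double-check the boundary behavior at $n = 0$ (the sum should give $1$, counting the single-node tree) and confirm the upper summation limit: since $\ell = n - jk \ge 0$ we need $j \le n/k$, and the summand vanishes precisely when $\ell = 0$, so including or excluding $j = n/k$ when $k \mid n$ is harmless.
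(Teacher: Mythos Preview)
Your proposal is correct and follows essentially the same approach as the paper: identify $[T(n,0,\ldots,0)]_k$ as a largest class via Theorem~\ref{thm:largest}, characterize its members via Proposition~\ref{prop:plane2} as trees with all non-root degrees in $k\ZZ_{\ge0}$, then apply Proposition~\ref{prop:C_I} with $I=\{0,k,2k,\ldots\}$ and expand $(1-z^k)^{-n}$. The paper's version is terser but the logic is identical.
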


\begin{proof}
By Theorem~\ref{thm:largest}, the plane tree with multi-degree $(n,0,\ldots,0)$ is the minimal element of a $k$-equivalence class of the largest size.
By Proposition~\ref{prop:plane2}, the plane trees with $n+1$ nodes belonging to this $k$-equivalence class are those whose multi-degree is congruent to $(n,0,\ldots,0)$ modulo $k$. 
Setting $I=\{0,k,2k,\ldots\}$ and $\ell\in\{n-jk: 0\le j\le n/k\}$ in Proposition~\ref{prop:C_I} demonstrates that such plane trees are enumerated by 
\[ \sum_{0\le j\le n/k} \frac{n-jk}{n}[z^{jk}](1-z^k)^{-n}. \]
Applying a binomial expansion gives the result.
\end{proof}

\section{Proofs by Dyck paths}\label{sec:Dyck}

We use Dyck paths to prove the closed formulas obtained in Section~\ref{sec:formula}.
Recall that every sequence $\mathbf{e}=(e_0,\ldots,e_n)$ of nonnegative integers corresponds to a lattice path 
\[ L(\mathbf{e}):=U^{e_0}DU^{e_1}\cdots DU^{e_n} \]
which is a Dyck path if and only if $\mathbf{e}$ is the multi-degree of a plane tree.
Assume the length of the lattice path $L=L(\mathbf{e})$ is $2n$.
For each $r\in\{0,1,\ldots,n\}$ we define a cyclic reordering of $L$,
\[  {\defcolor{L^{(r)}}} := U^{e_0} DU^{e_{r+1}}\cdots DU^{e_n}DU^{e_1}\cdots DU^{e_{r}}. \] 
We note that $L^{(0)} = L^{(n)}$.
Suppose the lowest point on the subpath $L' = DU^{e_1}DU^{e_2}\cdots DU^{e_n}$ has height $h$. 
For each $i\in[e_0]$, the line $y=h+i-1$ intersects $L'$, and the leftmost intersection point must be the end point of the $(r_i+1)$th down-step of $L$ for a unique integer $r_i=r_i(L)\in\{0,1,\ldots,n-1\}$.
It follows that $r_1(L)>\cdots>r_{e_0}(L)$.

\begin{lemma}\label{lem:Dyck1}
Let $L = U^{e_0}DU^{e_1}\cdots DU^{e_n}$ be a lattice path of length $2n$.
For $0\le r\le n-1$, $L^{(r)}$ is a Dyck path if and only if $r\in\{r_1(L),\ldots,r_{e_0}(L)\}$.
\end{lemma}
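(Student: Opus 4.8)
The plan is to decide the question by computing the minimum height of each rotated path $L^{(r)}$ and comparing it with $0$; this is a cycle-lemma–style argument specialized to the ``block rotations'' $L\mapsto L^{(r)}$. First a setup. We may assume $e_0\ge 1$, since for $e_0=0$ every $L^{(r)}$ begins with a down-step (so is never a Dyck path) while $\{r_1(L),\dots,r_{e_0}(L)\}=\emptyset$, and the equivalence is trivial. For $0\le j\le n$ write $P_j$ for the height of $L$ immediately after its prefix $U^{e_0}DU^{e_1}\cdots DU^{e_j}$, so $P_0=e_0$, $P_n=0$, and $P_j-P_{j-1}=e_j-1$. The low points of $L'$ occur just after its down-steps, at heights $P_0-1,\dots,P_{n-1}-1$, so $h=\mu-1$ where $\mu:=\min_{0\le j\le n-1}P_j$; note $\mu\le P_{n-1}\le P_n+1=1$, so $h\le 0$.

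\textbf{Step 1: a height criterion.} Write $L^{(r)}=U^{e_0}Q^{(r)}$, where $Q^{(r)}$ is the path $(DU^{e_{r+1}}\cdots DU^{e_n})(DU^{e_1}\cdots DU^{e_r})$ read starting from height $e_0$; since the prefix $U^{e_0}$ never goes negative, $L^{(r)}$ is a Dyck path iff $Q^{(r)}$ stays at height $\ge 0$. The two arcs of $Q^{(r)}$ are vertical translates of two arcs of $L$: the first is the tail $DU^{e_{r+1}}\cdots DU^{e_n}$ of $L$ shifted up by $e_0-P_r$, the second is the head $DU^{e_1}\cdots DU^{e_r}$ of $L$ shifted down by $P_r$. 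Reading off their minima (which are $\min_{r\le k\le n-1}P_k-1$ and $\min_{0\le k\le r-1}P_k-1$ respectively, the latter constraint being vacuous when $r=0$) yields that $L^{(r)}$ is a Dyck path if and only if
\[ \text{(A)}\ \ \min_{r\le k\le n-1}P_k\ \ge\ P_r-e_0+1 \qquad\text{and}\qquad \text{(B)}\ \ \min_{0\le k\le r-1}P_k\ \ge\ P_r+1. \]

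\textbf{Step 2: matching with the indices $r_i$.} The endpoint of the $(r{+}1)$st down-step of $L$ has height $P_r-1$, and it is the \emph{leftmost} point of $L'$ at that height exactly when the arc before it stays strictly higher, i.e.\ exactly when (B) holds; hence $r_i(L)$ is the unique $r$ with (B) and $P_r=h+i=\mu-1+i$. To conclude, suppose (B) holds for some $r$. Then $\min_{0\le k\le r-1}P_k\ge P_r+1>P_r\ge\mu$, so the minimum $\mu$ of $\{P_0,\dots,P_{n-1}\}$ is attained on $\{r,\dots,n-1\}$; thus $\min_{r\le k\le n-1}P_k=\mu$ and (A) becomes $P_r\le\mu+e_0-1$, i.e.\ (using $P_r\ge\mu$) $P_r\in\{\mu,\dots,\mu+e_0-1\}$. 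Since the running minimum $\min_{0\le k\le j}P_k$ decreases from $P_0=e_0$ to $\mu$ in steps of size at most $1$ (because $P_k-P_{k-1}=e_k-1\ge-1$), every value $v$ in that set is first attained at exactly one index, which is the unique $r$ with (B) and $P_r=v$, namely $r_{v-\mu+1}(L)$. Combining: $\{r:L^{(r)}\text{ is a Dyck path}\}=\{r:\text{(A) and (B)}\}=\{r:\text{(B) and }P_r\in\{\mu,\dots,\mu+e_0-1\}\}=\{r_1(L),\dots,r_{e_0}(L)\}$.

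\textbf{Expected main obstacle.} The only genuinely delicate step is Step 1: one must carefully track the two distinct vertical shifts applied to the swapped arcs, together with the one-unit dip contributed by the leading down-step of each block, and check that the inequalities (A)--(B) obtained this way match \emph{on the nose} the ``leftmost intersection with $y=h+i-1$'' description of the $r_i$ used in Step 2. Once the heights are bookkept correctly, Step 2 is just bookkeeping with running minima.
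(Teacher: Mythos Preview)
Your proof is correct and follows essentially the same approach as the paper's: you decompose $L^{(r)}$ into the initial run $U^{e_0}$ and the two cyclically swapped arcs, track their vertical shifts, and obtain exactly the paper's conditions ``$B$ weakly above $y=b$'' and ``$C$ weakly above $y=b-e_0$'' (your (B) and (A), with $b=P_r$). The only difference is one of detail: where the paper simply asserts that these two conditions are equivalent to $r=r_i(L)$ for some $i\in[e_0]$, you supply the running-minimum argument that makes this explicit, which is a welcome clarification rather than a different route.
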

      
\begin{proof}
Decompose $L$ into subpaths $A=U^{e_0}$, $B= DU^{e_1}\cdots DU^{e_{r}}$ and $C= DU^{e_{r+1}}\cdots DU^{e_n}$.
Assume the initial point of $C$ is $(a,b)$, which is also the initial point of the $(r+1)$th down-step of $L$.
This down-step becomes the first down-step of $L^{(r)}$.
One can check that $L^{(r)}$ is a Dyck path if and only if $B$ is weakly above $y=b$ and $C$ is weakly above $y=b-e_0$.
This is also equivalent to saying that the $r=r_i(L)$ for some $i\in[e_0]$. 
The result follows.
\end{proof}

\begin{example}
Figure~\ref{fig:reorder} shows a lattice path $L$ and the lattice paths $L^{(r)}$ for $r=1,2,3$. 
While $L^{(1)}=L^{(3)}$ is a Dyck path, $L=L^{(2)}$ is not.
\begin{figure}[h]
\[\begin{array}{cccc}
\xymatrix @R=8pt @C=8pt @M=-1.5pt { 
& & \bullet \ar@{-}[ld] \ar@{-}[rd] \\
& \bullet \ar@{-}[ld] & & \bullet \ar@{-}[rd] & & \bullet \ar@{-}[ld] \ar@{-}[rd] \\
\bullet & & & & \bullet & & \bullet \ar@{-}[rd] & & \bullet \ar@{-}[ld] \\
& & & & & & & \bullet 
} & 
\xymatrix @R=8pt @C=8pt @M=-1.5pt { 
& & \bullet \ar@{-}[ld] \ar@{-}[rd] & & \bullet \ar@{-}[ld] \ar@{-}[rd] \\
& \bullet \ar@{-}[ld] & & \bullet & & \bullet \ar@{-}[rd] & & \bullet \ar@{-}[ld] \ar@{-}[rd] \\
\bullet & & & & & & \bullet & & \bullet
} & 
\xymatrix @R=8pt @C=8pt @M=-1.5pt { 
& & \bullet \ar@{-}[ld] \ar@{-}[rd] \\ 
& \bullet \ar@{-}[ld] & & \bullet \ar@{-}[rd] & & \bullet \ar@{-}[ld] \ar@{-}[rd] \\
\bullet & & & & \bullet & & \bullet \ar@{-}[rd] & & \bullet \ar@{-}[ld] \\
& & & & & & & \bullet } &
\xymatrix @R=8pt @C=8pt @M=-1.5pt { 
& & \bullet \ar@{-}[ld] \ar@{-}[rd] & & \bullet \ar@{-}[ld] \ar@{-}[rd] \\
& \bullet \ar@{-}[ld] & & \bullet & & \bullet \ar@{-}[rd] & & \bullet \ar@{-}[ld] \ar@{-}[rd] \\
\bullet & & & & & & \bullet & & \bullet
} \\ \\
L=U^2DDUDDU & L^{(1)}=U^2DUDDUD & L^{(2)}=U^2DDUDDU & L^{(3)}=U^2DUDDUD
\end{array}\]
\caption{Cyclic reorderings of a Dyck path}\label{fig:reorder}
\end{figure}
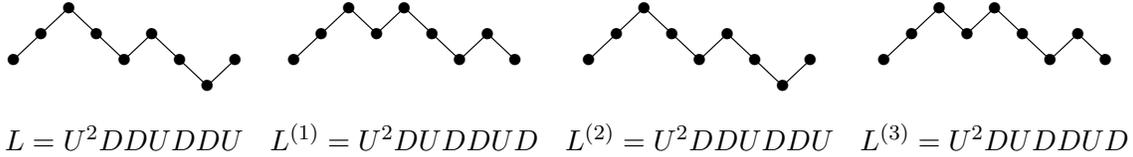
\end{example}

The above example shows that the same lattice path could appear multiple times in the multiset $\{L^{(r)}:0\le r\le n-1\}$.
This issue can be solved in the following way.
Let $I$ be a set of nonnegative integers. 
We define $\defcolor{\L_{I,n,\ell}}$ to be the set of all pairs $(L,i)$ where $L=U^{\ell}DU^{e_1}\cdots DU^{e_n}$ is a lattice path of length $2n$ with $e_1,\ldots,e_n\in I$ and $i\in[\ell]$.
We represent $(L,i)\in\L_{I,n,\ell}$ by marking the $i$th up-step of $L$ with a double line.
We write $\defcolor{\L_{I,n,[n]}}$ for the union of sets $\L_{I,n,\ell}$ with $\ell\in[n]$.
Similarly, we define $\defcolor{\L'_{I,n,\ell}}$ to be the set of all pairs $(L',j)$ where $L'=U^{\ell}DU^{e_1}\cdots DU^{e_n}$ is a Dyck path of length $2n$ with $e_1,\ldots,e_n\in I$ and $j\in[n]$.
We represent $(L',j)\in \L'_{I,n,\ell}$ by marking the $j$th down-step of $L'$ with a double line.
We write $\defcolor{\L'_{I,n,[n]}}$ for the union of sets $\L'_{I,n,\ell}$ with $\ell\in[n]$.

\begin{lemma}\label{lem:Dyck2}
Let $I$ be a set of nonnegative integers.
Then for each $\ell\in[n]$ we have a bijection $\L_{I,n,\ell}\to\L'_{I,n,\ell}$ defined by $(L,i)\mapsto (L^{(r_i(L))},n-r_i(L))$.
\end{lemma}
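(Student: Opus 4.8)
\textbf{Proof proposal for Lemma~\ref{lem:Dyck2}.}

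The plan is to exhibit an explicit inverse to the map $(L,i)\mapsto(L^{(r_i(L))},n-r_i(L))$ and to verify that both directions land in the correct sets. First I would check that the map is well defined: given $(L,i)\in\L_{I,n,\ell}$ with $L=U^\ell DU^{e_1}\cdots DU^{e_n}$, the index $r_i(L)$ is defined (since $i\in[\ell]=[e_0]$), and by Lemma~\ref{lem:Dyck1} the cyclic reordering $L^{(r_i(L))}$ is a Dyck path. Moreover $L^{(r_i(L))}=U^{e_0}DU^{e_{r_i+1}}\cdots DU^{e_n}DU^{e_1}\cdots DU^{e_{r_i}}$ begins with $U^\ell$ and all the exponents after down-steps are a cyclic rearrangement of $e_1,\dots,e_n$, hence still lie in $I$; and $n-r_i(L)\in[n]$ since $r_i(L)\in\{0,1,\dots,n-1\}$. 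So the image is a legitimate element of $\L'_{I,n,\ell}$.

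Next I would describe the inverse. Given $(L',j)\in\L'_{I,n,\ell}$ with $L'=U^\ell DU^{f_1}\cdots DU^{f_n}$, set $r:=n-j$ and cyclically rotate $L'$ so that the block $DU^{f_1}\cdots DU^{f_{j-1}}DU^{f_j}$ (the first $j$ down-steps together with the runs of up-steps immediately following them, except the marked $j$th down-step is moved to the very end) is brought around: concretely define $L:=U^\ell DU^{f_{j+1}}\cdots DU^{f_n} D U^{f_1}\cdots DU^{f_{j-1}} D U^{f_j}$ — this is just the cyclic reordering of $L'$ that makes the marked down-step the last one, which after re-indexing the exponents $e_1,\dots,e_n$ determines which up-step in the initial run $U^\ell$ should receive the mark. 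The key point is that the recipe ``rotate so the marked down-step becomes last'' is the inverse of ``rotate so that the $i$th intersection-triggering down-step becomes first'': under the correspondence of Lemma~\ref{lem:Dyck1}, the $(r_i(L)+1)$th down-step of $L$ is precisely the one that, when the path $L^{(r_i(L))}$ is traversed, is the down-step whose endpoint realizes the $i$th leftmost intersection of the rotated path with the appropriate horizontal line; tracking this data back recovers $i$ from $j=n-r_i(L)$ and recovers $L$ from $L'$. I would then verify $r_i(L)=r$ and that the recovered marked up-step index equals $i$, using the strictly decreasing property $r_1(L)>\cdots>r_{e_0}(L)$ to guarantee uniqueness of the index $i$ with $r_i(L)=r$.

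The main obstacle I anticipate is bookkeeping: making the claimed inverse genuinely inverse requires carefully matching the ``which horizontal line/which leftmost intersection point'' description of $r_i(L)$ in the setup before Lemma~\ref{lem:Dyck1} with the ``which down-step, counted in the rotated path'' description, since the same lattice path can occur several times in the multiset $\{L^{(r)}:0\le r\le n-1\}$ (as the example with $L^{(1)}=L^{(3)}$ shows) — the whole point of decorating paths with a marked step is to resolve exactly this ambiguity, and the proof must show the marking data is preserved losslessly in both directions. Once the dictionary ``marked up-step $i$ in $\L_{I,n,\ell}$'' $\leftrightarrow$ ``marked down-step $j=n-r_i(L)$ in $\L'_{I,n,\ell}$'' is pinned down via the height argument of Lemma~\ref{lem:Dyck1}, bijectivity is a direct check. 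I would present the height/intersection dictionary first, then state the inverse formula, then verify the two compositions are identities, keeping the run-exponents $e_1,\dots,e_n$ and their cyclic shifts explicit throughout.
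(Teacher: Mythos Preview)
Your proposal is correct and follows essentially the same approach as the paper: verify the forward map lands in $\L'_{I,n,\ell}$ via Lemma~\ref{lem:Dyck1}, then construct the inverse by the ``opposite'' cyclic rotation and use Lemma~\ref{lem:Dyck1} again to recover a unique index $i$. The paper is somewhat more streamlined in phrasing the inverse---it simply sets $L=(L')^{(n-r)}$ (which is exactly your explicit formula), notes that then $L^{(r)}=L'$, and invokes Lemma~\ref{lem:Dyck1} directly to conclude that $r=r_i(L)$ for a unique $i\in[\ell]$, without re-deriving the height/intersection dictionary---so you can safely compress your bookkeeping discussion.
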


\begin{proof}
If $(L,i)\in\L_{I,n,\ell}$ then $(L^{(r_i(L))},n-r_i(L)) \in\L'_{I,n,\ell}$ by Lemma~\ref{lem:Dyck1}.
Conversely, suppose that $(L',n-r)\in\L'_{I,n,\ell}$ where $r\in\{0,1,\ldots,n-1\}$. 
Then $L=(L')^{(n-r)}$ satisfies $L^{(r)}=L'$.
By Lemma~\ref{lem:Dyck1} we have $r=r_i(L)$ for a unique $i\in[\ell]$ and thus $(L,i)\in\L_{I,n,\ell}$.
The result follows. 
\end{proof}

\begin{example}
The bijection in Lemma~\ref{lem:Dyck2} is illustrated in Figure~\ref{fig:Dyck2}.
\begin{figure}[h]
\[\begin{array}{cccccc}
\xymatrix @R=8pt @C=8pt @M=-1.5pt { 
& & \bullet \ar@{-}[ld] \ar@{-}[rd] \\
& \bullet \ar@{=}@[red][ld] & & \bullet \ar@{-}[rd] & & \bullet \ar@{-}[ld] \ar@{-}[rd] \\
\bullet & & & & \bullet & & \bullet \ar@{-}[rd] & & \bullet \ar@{-}[ld] \\
& & & & & & & \bullet 
} & \raisebox{-10pt}{$\leftrightarrow$} &
\xymatrix @R=8pt @C=8pt @M=-1.5pt { 
& & \bullet \ar@{-}[ld] \ar@{=}@[red][rd] & & \bullet \ar@{-}[ld] \ar@{-}[rd] \\
& \bullet \ar@{-}[ld] & & \bullet & & \bullet \ar@{-}[rd] & & \bullet \ar@{-}[ld] \ar@{-}[rd] \\
\bullet & & & & & & \bullet & & \bullet
} & \quad
\xymatrix @R=8pt @C=8pt @M=-1.5pt { 
& & \bullet \ar@{=}@[red][ld] \ar@{-}[rd] \\
& \bullet \ar@{-}[ld] & & \bullet \ar@{-}[rd] & & \bullet \ar@{-}[ld] \ar@{-}[rd] \\
\bullet & & & & \bullet & & \bullet \ar@{-}[rd] & & \bullet \ar@{-}[ld] \\
& & & & & & & \bullet 
} & \raisebox{-10pt}{$\leftrightarrow$} 
&\xymatrix @R=8pt @C=8pt @M=-1.5pt { 
& & \bullet \ar@{-}[ld] \ar@{-}[rd] & & \bullet \ar@{-}[ld] \ar@{-}[rd] \\
& \bullet \ar@{-}[ld] & & \bullet & & \bullet \ar@{=}@[red][rd] & & \bullet \ar@{-}[ld] \ar@{-}[rd] \\
\bullet & & & & & & \bullet & & \bullet
} \\ \\
{\color{red}U}UDDUDDU & \leftrightarrow & UU{\color{red}D}UDDUD & \quad U{\color{red}U}DDUDDU & \leftrightarrow & UUDUD{\color{red}D}UD 
\end{array}\]
\caption{The bijecion in Lemma~\ref{lem:Dyck2}}\label{fig:Dyck2}
\end{figure}
\end{example}

Now we may use Dyck paths to give alternate proofs of the main results of Section~\ref{sec:formula}.

\begin{proof}[Another Proof of Proposition~\ref{prop:C_I}]
Let $I$ be a set of nonnegative integers and let $\ell\in[n]$. 
Then
\[ | \L_{I,n,\ell} | = \ell [z^{n-\ell}] \left( \sum_{i\in I} z^i \right)^{n}.\]
By Lemma~\ref{lem:Dyck2}, dividing this number by $n$ gives the result.
\end{proof}

We define $\defcolor{\L_{j,k,n,\ell}}$ to be the set of all lattice paths $U^{\ell}DU^{e_1}\cdots DU^{e_n}$ of length $2n$ with $j$ segments $U^{e_{i_1}},\ldots,U^{e_{i_j}}$ marked, each containing $U^k$, and with one up step $U$ marked in $U^\ell$.
We write $\defcolor{\L_{j,k,n,[n]}}$ for the union of sets $\L_{j,k,n,\ell}$ with $\ell\in[n]$.

\begin{proof}[Another Proof of \eqref{eq:M2}]
Assume $0\le j\le n/k$.
Each element of $\L_{j,k,n+1,1}$ can be constructed in the following way.
First write down $UD$ followed by $2n-jk$ empty spots.
Arbitrarily fill in $n$ of these empty spots with $D$'s and the rest with $U$'s.
Then choose $j$ of the $n+1$ copies of $D$.
Finally, for each of the $j$ chosen $D$'s, insert $U^k$ immediately after it and mark the whole segment of $U$'s containing this $U^k$.
It follows that
\[ \left| \L_{j,k,n+1,1} \right| = {n+1\choose j}{2n-jk\choose n}.\]

We assign a sign $(-1)^j$ to every element of $\L_{j,k,n+1,1}$.
If $L=UDU^{e_0}DU^{e_1}\cdots DU^{e_n}\in \L_{j,k,n+1,1}$ and there exists an $i\in\{0,1,\ldots,n\}$ such that $e_i\ge k$ then let $i$ be as small as possible.
If the segment $U^{e_i}$ is not marked then we mark it; otherwise we unmark it.
This defines a sign-reversing involution on all elements in the union of the sets $\L_{j,k,n+1,1}$ with $0\le j\le n/k$, except those avoiding $U^k$.
Thus, 
\[ \left| \L_{[k-1],n+1,1} \right| = \sum_{0\le j\le n/k} (-1)^j \left| \L_{j,k,n+1,1} \right|. \]
By Lemma~\ref{lem:Dyck2}, dividing this number by $n+1$ gives $M_{k-1,n} = \left| \L'_{[k-1],n,[n]} \right| = \left| \L'_{[k-1],n+1,1} \right|$.
\end{proof}	

\begin{proof}[Another Proof of \eqref{eq:C2}]
Assume $0\le j\le(n-1)/k$.
Every element of $\L_{j,k,n,[n]}$ can be constructed in the following way.
First write down $2n-jk$ empty spots and choose $n+1$ of them.
Fill in the first chosen spot with a marked $U$ and the remaining with $n$ copies of $D$.
Then fill in the rest spots by $U$'s.
Finally, choose $j$ of the $n$ copies of $D$'s and for each of them, insert $U^k$ before it and mark the entire segment of $U$'s containing this $U^k$.
Hence
\[ \left| \L_{j,k,n,[n]} \right|  = {n\choose j} {2n-jk\choose n+1}. \]

We assign $(-1)^j$ to each element of $\L_{j,k,n,[n]}$.
If $L=U^{e_0}DU^{e_1}D\cdots U^{e_n}\in \L_{j,k,n,[n]}$ and there exists an $i\in [n]$ such that $e_i\ge k$ then let $i$ be as small as possible.
If the segment $U^{e_i}$ is not marked then we mark it; otherwise we unmark it.
This defines a sign-reversing involution on all elements in the union of the sets $\L_{j,k,n,[n]}$ with $0\le j\le (n-1)/k$, except those avoiding $DU^k$.
Thus 
\[ \sum_{1\le \ell\le n} \ell \left| \L_{[k-1],n,\ell} \right| = \sum_{0\le j\le (n-1)/k} (-1)^j \left|\L_{j,k,n,[n]} \right|. \]
Dividing this number by $n$ and using Lemma~\ref{lem:Dyck2} we have the formula \eqref{eq:C2} for $C_{k,n}$.
\end{proof}

\begin{proof}[Another Proof of Proposition~\ref{prop:FormulaLargest}]
If $L=U^{e_0}DU^{e_1}\cdots DU^{e_n}$ be a lattice path of length $2n$ such that $k$ divides $e_1,\ldots,e_n$,
then $e_1+\cdots+e_n=jk$ for some nonnegative integer $j\le n/k$ and $\ell=n-jk$.
If $I=\{0,k,2k,\ldots\}$ then
\[ \left| \L_{I,n,n-jk} \right| = (n-jk){n+j-1 \choose j}.\]
Hence the result follows from Lemma~\ref{lem:Dyck2}.
\end{proof}

\section{Some refinements}\label{sec:refinement}

The Catalan number $C_n$ can be refined to the \emph{Narayana number} 
\[ N_{n,r} := \frac 1n {n\choose r}{n\choose r-1} \]
which enumerates plane trees with $n+1$ total nodes, of which $r$ are internal, or equivalently, Dyck paths of length $2n$ with $r$ local maxima, called \demph{peaks}.
We present similar refinements for $M_{k,n}$ and $C_{k,n}$.

Assume $k\ge1$ and $0\le r\le n$.
We define $\defcolor{M_{k,n,r}}$ to be the number of plane trees with $n+1$ nodes, of which $r$ are internal, such that the degree of each node is no more than $k$.
Similarly, $\defcolor{C_{k,n,r}}$ denotes the number of plane trees with $n+1$ nodes, of which $r$ are internal, such that each non-root node has degree less than $k$.
We have $M_{k,0,0} = C_{k,0,0} = 1$ and $M_{k,n,0}=C_{k,n,0}=0$ for $n\ge1$.
Moreover, if $n\ge1$ then 
\[ M_{k,n} = M_{k,n,1} + \cdots + M_{k,n,n} \quad\text{and}\quad 
C_{k,n} = C_{k,n,1} + \cdots + C_{k,n,n}. \]

To derive formulas for $M_{k,n,r}$ and $C_{k,n,r}$, let $I$ be a set of strictly positive integers and denote by $\defcolor{C_{I,n,r}^{(\ell)}}$ the number of plane trees whose multi-degree $(\md_0,\ldots,\md_n)$ satisfies $\md_0=\ell$, $\md_1,\ldots,\md_n\in I\cup\{0\}$, and $|\{i\in\{0,\ldots,n\}: \md_i>0\}| = r$.
We have
\begin{equation}\label{eq:r}
M_{k,n,r} = C^{(1)}_{[k],n+1,r+1} \quad\text{and}\quad
C_{k+1,n,r} = \sum_{\ell\ge0} C_{[k],n,r}^{(\ell)}. 
\end{equation}
\begin{proposition}\label{prop:C_nr}
Let $I$ be a set of positive integers. 
For $\ell\ge0$ and $r\in\{0,\ldots,n-1\}$ we have 
\[ C_{I,n,r+1}^{(\ell)} = \frac\ell{n} \left[x^{r} z^{n-\ell}\right] \left( 1+x\sum_{i\in I} z^i \right)^{n} 
= \frac{\ell}{n}\sum_{\substack{ \sum_{i\in I}m_i = r \\ \sum_{i\in I} im_i = n-\ell} } {n\choose r} {r\choose m_i: i\in I }.\]
\end{proposition}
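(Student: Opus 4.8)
The plan is to deduce Proposition~\ref{prop:C_nr} from Proposition~\ref{prop:C^l} (equivalently Corollary~\ref{cor:Krew}) by introducing a single extra variable $x$ that marks the positive entries of the multi-degree. I would specialize the generating function $C(\bx,z)$ by setting $x_0=1$, $x_i=x$ for every $i\in I$, and $x_j=0$ for every $j\notin I\cup\{0\}$. Under this specialization $\sum_i x_iz^i$ becomes $1+x\sum_{i\in I}z^i$, and a monomial $\prod_i x_i^{m_i}$ appearing in the expansion of $\left(\sum_i x_iz^i\right)^n$ becomes $x^{\sum_{i\in I}m_i}$ while still carrying $z$-degree $\sum_{i\in I}im_i$, since the $i=0$ term contributes neither an $x$ nor a $z$.

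The first step is to rewrite what $C_{I,n,r+1}^{(\ell)}$ counts in the language of Corollary~\ref{cor:Krew}. A plane tree contributing to $C_{I,n,r+1}^{(\ell)}$ has $n$ non-root nodes; let $m_i$ be the number of non-root nodes of degree $i$. The requirements $\md_0=\ell$, $\md_1,\ldots,\md_n\in I\cup\{0\}$, and that exactly $r+1$ of the $\md_i$ are positive translate into $m_j=0$ for $j\notin I\cup\{0\}$, $\sum_{i\in I}m_i=r$, hence $m_0=n-r$; and Corollary~\ref{cor:Krew} additionally forces $\sum_{i\in I}im_i=n-\ell$. One may assume $\ell\ge1$, since for $\ell=0$ and $n\ge1$ both sides of the claimed identity vanish, while for $n=0$ the range of $r$ is empty; note also that a plane tree with $n\ge1$ nodes and root degree $\ell\ge1$ has a non-root leaf, which is why only $r\le n-1$ occurs. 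Summing the count $\frac{\ell}{n}{n\choose m_0,\,(m_i)_{i\in I}}$ from Corollary~\ref{cor:Krew} over all admissible $(m_i)_{i\in I}$ gives
\[ C_{I,n,r+1}^{(\ell)} \;=\; \frac{\ell}{n}\sum_{\substack{\sum_{i\in I}m_i=r\\ \sum_{i\in I}im_i=n-\ell}} {n\choose n-r,\,(m_i)_{i\in I}}. \]

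The second step is purely formal. Applying the specialization above to Proposition~\ref{prop:C^l} and extracting $[x^rz^{n-\ell}]$ from $\frac{\ell}{n}\left(1+x\sum_{i\in I}z^i\right)^n$ picks out exactly the monomials with $\sum_{i\in I}m_i=r$ and $\sum_{i\in I}im_i=n-\ell$, which matches the display above and gives the first equality of the Proposition. The second equality then follows from the multinomial factorization ${n\choose n-r,\,(m_i)_{i\in I}}={n\choose r}{r\choose (m_i)_{i\in I}}$, read combinatorially as first choosing which $r$ of the $n$ non-root positions carry a positive degree and then distributing those $r$ nodes among the allowed degrees in $I$.

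I do not anticipate a real obstacle here; the argument is essentially bookkeeping layered on top of Corollary~\ref{cor:Krew}. The one point that needs care is keeping track of which sums run over $I$ and which over all indices, i.e.\ remembering that after the specialization the $z$-exponent of a term is $\sum_{i\in I}im_i$ and not $\sum_{\text{all }i}im_i$, because the $i=0$ factor is inert. As an alternative one could instead refine the Dyck-path set $\L_{I,n,\ell}$ of Section~\ref{sec:Dyck} by recording the number of nonempty blocks $U^{e_i}$ with $i\ge1$ and reapply Lemma~\ref{lem:Dyck2}, but the generating-function route is shorter.
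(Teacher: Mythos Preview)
Your proposal is correct and is exactly what the paper intends by its one-line proof ``This result follows from Proposition~\ref{prop:C^l}'': you specialize the variables $x_i$ in Proposition~\ref{prop:C^l}/Corollary~\ref{cor:Krew} to mark membership in $I$, then read off the coefficient of $x^r z^{n-\ell}$. Your remark about the Dyck-path alternative via Lemma~\ref{lem:Dyck2} also matches the paper's aside.
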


\begin{proof}
This result follows from Proposition~\ref{prop:C^l}.
One can also prove it in a similar way as the proof of Proposition~\ref{prop:C_I} provided in Section~\ref{sec:Dyck} using Dyck paths and Lemma~\ref{lem:Dyck2}.
\end{proof}

\begin{remark}
When $I$ consists of all positive integers we have
\[ C_{I,n,r+1}^{(\ell)} = \frac\ell{n} \left[x^{r} z^{n-\ell}\right] \left( 1+\frac{xz}{1-z} \right)^{n} 
= \frac\ell n {n\choose r} [z^{n-r-\ell}] (1-z)^{-r} 
= \frac\ell n {n\choose r} {n-\ell-1 \choose r-1} \]
and $C_{I,n+1,r+1}^{(1)}$ equals the {Narayana number} $N_{n,r}$.
\end{remark}


\begin{proposition}
For $k\ge1$ and $n\ge0$ we have
\[ M_{k,n,r} 
= \frac1{n+1} \sum_{\substack{ \lambda\subseteq k^{n+1} \\ |\lambda|=n\\ \ell(\lambda)=r }} m_\lambda(1^{n+1}).  \]
For $k\ge0$ and $n\ge1$ we have
\[ C_{k+1,n,r} 
= \sum_{\substack{\lambda\subseteq k^n\\ |\lambda|<n\\ \ell(\lambda)=r }} \frac{n-|\lambda|}{n} m_\lambda(1^n). \]
\end{proposition}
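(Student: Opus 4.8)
The plan is to deduce both identities from the relations in~\eqref{eq:r} together with Proposition~\ref{prop:C_nr}, which already expresses the refined counts $C_{I,n,r}^{(\ell)}$ as coefficient extractions; what remains is to rewrite the multinomial coefficients that occur as evaluations $m_\lambda(1^n)$ of monomial symmetric functions, in the same spirit as the proof of Corollary~\ref{cor:FormulaPositive}, while keeping track of the number of nodes of positive degree (equivalently, the number of peaks of the associated Dyck path).

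First I would handle $M_{k,n,r}$. By~\eqref{eq:r} it equals $C^{(1)}_{[k],n+1,r+1}$, so Proposition~\ref{prop:C_nr}, applied with the set $[k]$ of positive integers, with $n$ replaced by $n+1$, and with $\ell=1$, gives
\[ M_{k,n,r} = \frac{1}{n+1}\sum_{\substack{m_1+\cdots+m_k=r\\ m_1+2m_2+\cdots+km_k=n}} \binom{n+1}{r}\binom{r}{m_1,\ldots,m_k}. \]
I would then invoke the standard dictionary between a partition and its multiplicities: a tuple $(m_1,\ldots,m_k)$ with $m_1+\cdots+m_k=r$ and $m_1+2m_2+\cdots+km_k=n$ is the multiplicity data of a unique partition $\lambda\subseteq k^{n+1}$ with $\ell(\lambda)=r$ and $|\lambda|=n$, its remaining $m_0:=(n+1)-r$ parts being zero, and for such a partition $m_\lambda(1^{n+1})=\binom{n+1}{m_0,m_1,\ldots,m_k}=\binom{n+1}{r}\binom{r}{m_1,\ldots,m_k}$ since $\binom{n+1}{m_0}=\binom{n+1}{r}$. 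Substituting turns the displayed sum into $\frac{1}{n+1}\sum_\lambda m_\lambda(1^{n+1})$, which is the first identity.

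For $C_{k+1,n,r}$ I would start from $C_{k+1,n,r}=\sum_{\ell\ge0}C_{[k],n,r}^{(\ell)}$, noting that the $\ell=0$ term vanishes for $n\ge1$ because of the factor $\ell/n$ in Proposition~\ref{prop:C_nr}. Applying that proposition to each summand, one may swap the sum over $\ell$ with the sum over the multiplicity tuples, since a tuple with $m_1+2m_2+\cdots+km_k<n$ determines $\ell=n-(m_1+2m_2+\cdots+km_k)\ge1$; after the swap the prefactor $\ell/n$ becomes $(n-|\lambda|)/n$. Translating multiplicities back into partitions $\lambda\subseteq k^n$ with $|\lambda|<n$ then yields a sum of the asserted shape $\sum_\lambda\tfrac{n-|\lambda|}{n}m_\lambda(1^n)$, with the number of positive parts of $\lambda$ dictated by the internal-node count $r$ together with the contribution of the root.

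The coefficient extractions and binomial manipulations are routine once Proposition~\ref{prop:C_nr} is in hand. The step I expect to require the most care --- indeed the only genuine obstacle --- is aligning the number $\ell(\lambda)$ of positive parts with the internal-node count $r$: in the $M$ case the partition records the degrees of all $n+1$ nodes of the tree in question, so $\ell(\lambda)=r$ directly, whereas in the $C$ case the partition records only the $n$ non-root degrees while the root, of degree $\ell\ge1$, is itself one of the $r$ internal nodes; thus one must keep the lone zero-part multiplicity $m_0$ and the boundary case $|\lambda|=n$ (that is, $\ell=0$, which occurs only when $n=0$) straight.
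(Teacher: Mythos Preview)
Your approach is exactly the paper's: its proof reads, in full, ``The result follows from~\eqref{eq:r} and Proposition~\ref{prop:C_nr}.'' Your treatment of $M_{k,n,r}$ carries this out correctly.

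For $C_{k+1,n,r}$, however, your own final paragraph already contains the seed of a discrepancy that you stop short of naming. As you observe, the partition $\lambda\subseteq k^n$ encodes the $n$ \emph{non-root} degrees, so $\ell(\lambda)$ counts non-root internal nodes; since the root (of degree $\ell\ge1$ for $n\ge1$) is itself one of the $r$ internal nodes, the computation via~\eqref{eq:r} and Proposition~\ref{prop:C_nr} actually forces $\sum_{i\in[k]}m_i=r-1$, hence $\ell(\lambda)=r-1$, not $\ell(\lambda)=r$ as printed. A quick sanity check: with $k=1$ and $n=2$ the two plane trees on three nodes give $C_{2,2,1}=C_{2,2,2}=1$, whereas the right-hand side with the constraint $\ell(\lambda)=r$ is nonzero only for $r\in\{0,1\}$ (the relevant partitions being $\emptyset$ and $(1)$). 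So the $C$-identity as stated carries an off-by-one; what your method (and the paper's) establishes is the version with $\ell(\lambda)=r-1$. You should say this explicitly rather than leaving the alignment ``dictated by the internal-node count $r$ together with the contribution of the root'' unresolved.
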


\begin{proof}
The result follows from \eqref{eq:r} and Proposition~\ref{prop:C_nr}.
\end{proof}

\begin{proposition}
For $k\ge1$ and $n\ge0$ we have
\[ M_{k,n,r} = \frac{1}{n+1} {n+1\choose r} \sum_{0\le j\le (n-r)/k} (-1)^j {r\choose j}{n-jk-1\choose r-1}.  \]
For $k\ge1$ and $n\ge1$ we have
\[ C_{k+1,n,r} = \sum_{0\le j\le (n-r)/k} \frac{(-1)^j}n {n\choose r-1}  {r-1\choose j} {n-jk\choose r}. \]
\end{proposition}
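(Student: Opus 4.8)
The plan is to deduce both identities from Proposition~\ref{prop:C_nr}, in parallel with the way Theorem~\ref{thm:FormulaAlt} was obtained from Proposition~\ref{prop:C_I}, but now retaining the statistic $r$ that counts the positive entries of the multi-degree. Take $I=[k]=\{1,\ldots,k\}$, so that $\sum_{i\in I}z^i=z(1-z^k)/(1-z)$. Expanding $\bigl(1+x\sum_{i\in I}z^i\bigr)^n$ first in $x$ and extracting the coefficient of $x^r$ in Proposition~\ref{prop:C_nr} gives
\[ C_{I,n,r+1}^{(\ell)} = \frac{\ell}{n}\binom{n}{r}\,[z^{n-\ell-r}]\,(1-z^k)^r(1-z)^{-r}. \]
Expanding $(1-z^k)^r=\sum_j(-1)^j\binom{r}{j}z^{jk}$ and $(1-z)^{-r}=\sum_{m\ge0}\binom{m+r-1}{r-1}z^m$ and reading off the coefficient of $z^{n-\ell-r}$ then yields the master formula
\[ C_{I,n,r+1}^{(\ell)} = \frac{\ell}{n}\binom{n}{r}\sum_{0\le j\le(n-\ell-r)/k}(-1)^j\binom{r}{j}\binom{n-\ell-jk-1}{r-1}. \]

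For the Motzkin refinement, \eqref{eq:r} gives $M_{k,n,r}=C_{[k],n+1,r+1}^{(1)}$; substituting $\ell=1$ and replacing $n$ by $n+1$ in the master formula reproduces the first claimed formula verbatim, since $(n+1)-1-r=n-r$ and $(n+1)-1-jk-1=n-jk-1$.

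For the Catalan refinement, \eqref{eq:r} gives $C_{k+1,n,r}=\sum_{\ell\ge0}C_{[k],n,r}^{(\ell)}$, where the $\ell=0$ term vanishes for $n\ge1$ (the factor $\ell/n$ kills it in any case). Writing $C_{[k],n,r}^{(\ell)}=C_{[k],n,(r-1)+1}^{(\ell)}$, substituting the master formula with $r$ replaced by $r-1$, and interchanging the sums over $\ell$ and $j$, one is reduced to evaluating $\sum_{\ell\ge1}\ell\binom{n-jk-1-\ell}{r-2}$. This is precisely the instance $a=n-jk$, $b=r$, and exponent $1$ of the identity \eqref{eq:second}, which collapses the sum to $\binom{n-jk}{r}$; the resulting expression is exactly the second claimed formula, and the condition $\binom{n-jk}{r}\neq0$ is what imposes $j\le(n-r)/k$.

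The whole argument is bookkeeping with binomial coefficients; I do not anticipate any genuine obstacle. The only points needing care are tracking the index shifts between $(n,r,\ell)$ and the parameters of Proposition~\ref{prop:C_nr}, and recognizing the correct specialization of \eqref{eq:second} needed to collapse the $\ell$-sum in the Catalan case --- the same device used at the end of the proof of Theorem~\ref{thm:FormulaAlt}.
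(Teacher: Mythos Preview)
Your proposal is correct and follows essentially the same route as the paper's own proof: both derive the master formula $C_{[k],n,r+1}^{(\ell)}=\frac{\ell}{n}\binom{n}{r}\sum_j(-1)^j\binom{r}{j}\binom{n-\ell-jk-1}{r-1}$ from Proposition~\ref{prop:C_nr} with $I=[k]$, specialize to $\ell=1$ and $n\mapsto n+1$ for $M_{k,n,r}$, and then sum over $\ell$ and collapse via \eqref{eq:second} (with $a=n-jk$, $b=r$, and the parameter $r$ in \eqref{eq:second} equal to $1$) for $C_{k+1,n,r}$. The index shifts and the use of \eqref{eq:second} are exactly as in the paper.
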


\begin{proof}
The proof is similar to the proof of Theorem~\ref{thm:FormulaAlt}.
By Proposition~\ref{prop:C_nr} we have
\begin{eqnarray*}
C_{[k],n,r+1}^{(\ell)} &=& 
\frac\ell{n} \left[x^r z^{n-\ell}\right] \left( 1+\frac{xz(1-z^k)}{1-z} \right)^{n} \\
&=& \frac\ell n {n\choose r} [z^{n-\ell-r}] \left( \frac{1-z^k}{1-z}\right)^{r} \\
&=& \frac\ell n {n\choose r} \sum_{0\le j\le (n-\ell-r)/k} (-1)^j {r\choose j} {n-\ell-jk-1 \choose r-1}.
\end{eqnarray*}
This implies the desired formula for $M_{k,n,r} = C_{[k],n+1,r+1}^{(1)}$. 
Since $C_{k+1,n,r} = \sum_{\ell\ge0} C_{[k],n,r}^{(\ell)}$, we have
\begin{eqnarray*}
C_{k+1,n,r} 
&=& \sum_{1\le \ell\le n} \frac{\ell}{n} {n\choose r-1} \sum_{0\le j\le (n-\ell-r+1)/k} (-1)^j {r-1\choose j}{n-\ell-1-jk\choose r-2} \\
&=&  \sum_{0\le j\le (n-r)/k} \frac{(-1)^j}n {n\choose r-1}  {r-1\choose j} \sum_{1\le \ell\le n-r+1-jk} \ell {n-\ell-1-jk\choose r-2}  \\
&=&  \sum_{0\le j\le (n-r)/k} \frac{(-1)^j}n {n\choose r-1}  {r-1\choose j} {n-jk\choose r}. 
\end{eqnarray*}
Here the last step follows from \eqref{eq:second}.
\end{proof}

\begin{proof}[Another Proof]
We first observe the number of peaks of a Dyck path $L$ of length $2n$ to be one greater than the number of local minimum points of $L$ other than $(0,0)$ and $(2n,0)$.
Call these local minimum points \demph{valleys}.
Note this relation does not hold for the numbers of peaks and valleys of a general lattice path.
However, the operation $L\mapsto L^{(i)}$ preserves the number of valleys.

Now recall the definition for $\L_{j,k,n,\ell}$ and $\L_{j,k,n,[n]}$ from Section~\ref{sec:Dyck}.
Write $\L_{j,k,n,\ell,r}$ and $\L_{j,k,n,[n],r}$ for the maximal subsets of $\L_{j,k,n,\ell}$ and $\L_{j,k,n,[n]}$, respectively, whose members have exactly $r-1$ valleys.
We will constructively prove the following formulas:
\[ (i)\rule{13pt}{0pt}|\L_{j,k+1,n+1,1,r+1}| = {n+1\choose r} {r\choose j}{n-1-jk\choose r-1}\,,\]
\[ (ii)\rule{11pt}{0pt}|\L_{j,k+1,n,[n],r}| = {n\choose r-1}  {r-1\choose j} {n-jk\choose r}\,.\rule{20pt}{0pt} \]
 
\emph{Construction} $(i)$: Every element of $\L_{j,k+1,n+1,1,r+1}$ can be constructed in the following way.
First write down a $U$ followed by $n+1$ copies of $D$. 
Then insert an up-step immediately after $r$ of these down-steps so that there are $r$ valleys.
Next choose $j$ of these $r$ valleys and insert $U^k$ right after each of them.
Finally partitioning $n-r-jk$ up-steps into $r$ possibly empty blocks and insert them immediately after the $r$ valleys.\smallskip

\emph{Construction} $(ii)$: Every element of $\L_{j,k,n,[n],r}$ can be constructed in the following way.
First write $n-jk$ up-steps and mark $r$ of them.
This partitions the $n-jk$ up-steps into $r+1$ blocks: the first one ends right before the first marked $U$, the second one starts from the first marked $U$ and ends right before the second marked $U$, and so on.
All these blocks of $U$'s are nonempty except possibly the first one.
Then choose $r-1$ of $n$ down-steps, put the first and second blocks of $U$'s before the first chosen $U$, the third block of $U$'s after the first chosen $U$, the fourth block after the second chosen $U$, and so on.
Leave the first and second blocks of $U$'s alone so that they still contain a marked $U$.
Finally, choose $j$ of the remaining $r-1$ blocks, append $U^{k}$ to each of them, and mark these $j$ expanded blocks.

The rest of the proof is similar to the proofs for \eqref{eq:M2} and \eqref{eq:C2} in Section~\ref{sec:Dyck}.
\end{proof}

\section{Remarks and Questions}\label{sec:questions}

\subsection{}
Let $*$ be a binary operation defined on a set $X$ and let $x_0,\ldots,x_n$ be $X$-valued indeterminates.
Denote by $C_{*,n}$ the number of distinct functions from $X^{n+1}$ to $X$ obtained by parenthesizing $x_0*\cdots *x_n$. 
In general $1\le C_{*,n}\le C_n$, and if $*$ is $k$-associative then $1\le C_{*,n}\le C_{k,n}$.
For $n\ge2$ one has $C_{*,n}=1$ if and only if $*$ is associative.
If $*$ is the $k$-associative operation defined in Example~\ref{example:ring}, then $C_{*,n}=C_{k,n}$ by Proposition~\ref{prop:binary2} (iv).
Can one characterize when $C_{*,n}=C_n$ and when $C_{*,n}=C_{k,n}$ for $k>1$? 
Do other interesting numbers $C_{*,n}$ arise from binary operations we have not yet considered?

\subsection{}
We have seen the modular Catalan number $C_{k,n}$ enumerates several restricted families of Catalan objects.
There are many other families of Catalan objects, such as those presented in~\cite{StanleyCatalan}.
Elementary connections between some of these objects and those studied here lead to other restricted families of Catalan object enumerated by $C_{k,n}$.
For example, there is a bijection between $2\times n$ tableaux and chains in the Bruhat order of the Grassmannian of $2$-planes in $(n{+}2)$-space which we did not discuss.
It may be interesting to extend our investigation by exploring some less-elementary connections between Catalan objects.

\subsection{}
We have seen that the poset of $\T_{k,n}$ consisting of all binary trees avoiding $\comb_k^{1}$ under the Tamari order is the same as the Tamari lattice $\T_n$ when $k\ge n$ and is isomorphic to the Boolean lattice $\mathcal B_{n-1}$ when $k=2$. What can be said about this poset when $2<k<n$?

\subsection{}
An exercise shows $C_{n+1}/C_n \to 4$ as $n\rightarrow\infty$.
One may compare this to the asymptotic behavior of the $k$-modular analogue $C_{k,n+1}/C_{k,n}$.
There is not much to compare for $k=2$ as $C_{2,n+1}/C_{2,n}=2$ for all $n\ge1$.
Computer experimentation suggests $\lim_{n\to\infty} C_{3,n+1}/C_{3,n} = 3$, $3<\lim_{n\to\infty} C_{k,n+1}/C_{k,n} <4$ for $k\ge4$, and 
\[\lim_{n\to\infty} \frac{M_{k-1,n+1}}{M_{k-1,n}}=\lim_{n\to\infty}\frac{C_{k,n+1}}{C_{k,n}}\,.\]
Which, if any, of these suggestions are true?

\subsection{}
The sequence $C_{3,n}$ is the OEIS sequence A005773, which enumerates various objects. 
Taking $k=3$ in \eqref{eq:C1} and assuming $j$ is the number of $1$'s in $\lambda$, one obtains
\begin{eqnarray*}
C_{3,n} &=& \sum_{1\le\ell\le n}\frac{\ell}{n} \sum_{0\le j\le n-\ell} {n\choose \frac{n+\ell-j}2, j, \frac{n-\ell-j}2 } \\
&=& \sum_{0\le j\le n-1} \sum_{1\le\ell\le n-j} {n-1\choose j} {n-j\choose \frac{n-\ell-j}2} \frac\ell{n-j} \\
&=& \sum_{0\le i\le n-1} {n-1\choose i} \sum_{1\le\ell\le i+1} {i+1\choose \frac{i+1-\ell}2} \frac\ell{i+1} \\
&=& \sum_{0\le i\le n-1} {n-1\choose i} \sum_{0\le r\le i/2} \left[ {i \choose r} - {i\choose r-1} \right] \\
&=& \sum_{0\le i\le n-1} {n-1\choose i} {i\choose \lfloor i/2 \rfloor}.
\end{eqnarray*}
Here we assume ${n\choose m}:=0$ whenever $m$ is not a nonnegative integer. This formula for $C_{3,n}$ 
was obtained by Gouyou-Beauchamps and Viennot~\cite{DirectedAnimal} during their study of directed animals, and also obtained by Panyushev~\cite{Panyushev} using the affine Weyl group of the Lie algebra $\mathfrak{sp}_{2n}$ or $\mathfrak{so}_{2n+1}$.
We do not currently have an understanding of how these objects are related to the objects in Proposition~\ref{prop:C counts}.
Can one generalize the above formula for $C_{3,n}$ to $C_{k,n}$?

 
\subsection{}
Is each $k$-connected component of $\T_n$ (under the $k$-associative order) a meet-semilattice?
Is every interval in a $k$-connected component of $\T_n$ a lattice?

\end{document}